\documentclass[12pt, reqno]{amsart}
\author[P.~Leonetti]{Paolo Leonetti}
\address{
%Department of Economics, 
Universit\`a degli Studi dell'Insubria, via Monte Generoso 71, Varese 21100, Italy}
\email{leonetti.paolo@gmail.com}
\urladdr{\url{https://sites.google.com/site/leonettipaolo/}} 

\makeatletter
\@namedef{subjclassname@2020}{\textup{2020} Mathematics Subject Classification}
\makeatother

\keywords{Ideal convergence; summability; regular matrices; K\"othe--Toeplitz $\beta$-duals; Hahn--Schur theorem; infinite matrices of linear operators; convergent sequences.}
\subjclass[2020]{Primary: 40A35, 40G15. Secondary: 40H05, 54A20, 40A05.}

%40A35  	Ideal and statistical convergence
%40A05  	Convergence and divergence of series and sequences
%40H05
%11B05    Density, gaps, topology
%40G15  	Summability methods using statistical convergence
%54A20  	Convergence in general topology (sequences, filters, limits, convergence spaces, etc.)
%26A03  	Foundations: limits and generalizations, elementary topology of the line

\title{Regular matrices of unbounded linear operators}

\usepackage[T1]{fontenc}
\usepackage{amsmath}
\usepackage{amssymb}
\usepackage{amsthm}
\usepackage[left=2.5cm, right=2.5cm, top=3cm]{geometry}
\usepackage{hyperref}
\usepackage{fancyhdr}
\usepackage[inline]{enumitem}
\usepackage{comment}
\usepackage{nicefrac}
\usepackage{bm}
\usepackage{mathrsfs}
\usepackage{graphicx}
\usepackage[utf8]{inputenc}
\usepackage{cancel}
\usepackage{mathtools}

\newcommand{\vertiii}[1]{{\left\vert\kern-0.25ex\left\vert\kern-0.25ex\left\vert #1 
    \right\vert\kern-0.25ex\right\vert\kern-0.25ex\right\vert}}
		
\AtBeginDocument{%
   \def\MR#1{}
}

\newtheorem{thm}{Theorem}[section]
\newtheorem{cor}[thm]{Corollary}%[section]
\newtheorem{lem}[thm]{Lemma}
\newtheorem{prop}[thm]{Proposition}

\theoremstyle{definition} 
\newtheorem{defi}[thm]{Definition}%[section]
\let\olddefi\defi
\renewcommand{\defi}{\olddefi\normalfont}

\let\oldexample\example
\renewcommand{\example}{\oldexample\normalfont}
\newtheorem{rmk}[thm]{Remark}
\let\oldrmk\rmk
\renewcommand{\rmk}{\oldrmk\normalfont}

%{\Gamma_x(\mathrm{Fin})}

\pagestyle{fancy}
\fancyhf{}
\fancyhead[CO]
{\textsc{Regular matrices of unbounded linear operators}}
\fancyhead[CE]
{\textsc{Paolo Leonetti}}
\fancyhead[RO,LE]{\thepage}

\setlength{\headheight}{12pt}

\hypersetup{
    pdftitle={Regular matrices of unbounded linear operators},
    pdfauthor={Paolo Leonetti},
    pdfmenubar=false,
    pdffitwindow=true,
    pdfstartview=FitH,
    colorlinks=true,
    linkcolor=blue,
    citecolor=green,
    urlcolor=cyan
}

\uchyph=0

\providecommand{\MR}[1]{}

\providecommand{\MR}{\relax\ifhmode\unskip\space\fi MR }

\providecommand{\href}[2]{#2}

\begin{document}

\maketitle
\thispagestyle{empty}

\begin{abstract}
\noindent Let $X,Y$ be Banach spaces and fix a 
linear operator $T \in \mathcal{L}(X,Y)$ and ideals $\mathcal{I}, \mathcal{J}$ on the nonnegative integers. We obtain Silverman--Toeplitz type theorems on matrices $A=(A_{n,k}: n,k \in \omega)$ of linear operators in $\mathcal{L}(X,Y)$, so that 
$$
\mathcal{J}\text{-}\lim A\bm{x}=T(\hspace{.2mm}\mathcal{I}\text{-}\lim \bm{x})
$$
for every $X$-valued sequence $\bm{x}=(x_0,x_1,\ldots)$ which is $\mathcal{I}$-convergent [and bounded]. 
This allows us to establish the relationship between the classical Silverman--Toeplitz characterization of regular matrices and its multidimensional analogue for double sequences, its variant for matrices of linear operators, and the recent version (for the scalar case) in the context of ideal convergence. 
As byproducts, we obtain characterizations of several matrix classes and a generalization of the classical Hahn--Schur theorem. 
In the proofs we will use an ideal version of the Banach--Steinhaus theorem which has been recently obtained by De Bondt and Vernaeve in 
[J.~Math.~Anal.~Appl.~\textbf{495} (2021)]. 
%[J.~Math.~Anal.~Appl.~\textbf{1} (2021), 139--167]
\end{abstract}

\section{Introduction}

An infinite matrix with real entries $A=(a_{n,k})$ is said to be \emph{regular} if it transforms convergent sequences into convergent sequences and preserves the corresponding limits (details will be given in Section \ref{sec:mainresults}). 
A classical result due to Silverman--Toeplitz provides necessary and sufficient conditions, depending only on the entries of $A$, which characterize the class of regular matrices, see e.g. \cite[Theorem 2.3.7]{MR1817226}: %More precisely, a matrix $A$ is regular if and only if 
%$\sup_n \sum_k|a_{n,k}|<\infty$, $\lim_n\sum_ka_{n,k}=1$, and $\lim_na_{n,k}=0$ for all $k$ 
%$$
%\sup\nolimits_n \sum\nolimits_k|a_{n,k}|<\infty,
%\,\,\,\,
%\lim\nolimits_n\sum\nolimits_ka_{n,k}=1,
%\,\,\,\,\text{ and }\,\, 
%\lim\nolimits_na_{n,k}=0\,\,\text{ for all }k.
%$$ 
\begin{thm}\label{thm:SilvermanToeplizsimple}
An infinite real matrix $A=(a_{n,k})$ is regular if and only if\textup{:}
\begin{enumerate}[label={\rm (\roman*)}]
\item $\sup_n \sum_k|a_{n,k}|<\infty$\textup{;}
\item $\lim_n\sum_ka_{n,k}=1$\textup{;}
\item $\lim_na_{n,k}=0$ for all $k$\textup{.}
\end{enumerate}
\end{thm}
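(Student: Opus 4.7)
Assume (i), (ii), (iii) and let $(x_k)$ be convergent with limit $L$. Writing $x_k=L+(x_k-L)$ gives
\[
\sum_k a_{n,k}x_k - L \;=\; L\Big(\sum_k a_{n,k}-1\Big) \;+\; \sum_k a_{n,k}(x_k-L).
\]
The first summand on the right tends to $0$ as $n\to\infty$ by (ii). For the second, fix $\varepsilon>0$ and choose $K$ so that $|x_k-L|<\varepsilon$ for $k>K$; the tail past $K$ is then bounded by $\varepsilon\,\sup_n\sum_k|a_{n,k}|$ via (i), while the head is a fixed finite linear combination of the rows whose entries all vanish in $n$ by (iii). Sending $n\to\infty$ and then $\varepsilon\to 0$ closes this direction.

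\textbf{Necessity of (ii) and (iii).} These follow by plugging in specific convergent sequences. Regularity applied to the constant sequence $(1,1,\ldots)$ yields $\lim_n\sum_k a_{n,k}=1$, which is (ii); applied to each standard unit vector $e^{(k)}$, which converges to $0$, it yields $\lim_n a_{n,k}=0$, which is (iii).

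\textbf{Necessity of (i).} This is the delicate point. One first checks that $\sum_k|a_{n,k}|<\infty$ for every fixed $n$: if some row failed to be absolutely summable, a gliding-hump construction produces a null sequence $(x_k)$ (hence convergent) for which the formal series $\sum_k a_{n,k}x_k$ diverges, contradicting regularity. Consequently each row induces a bounded linear functional $T_n\colon c\to\mathbb{R}$, $x\mapsto\sum_k a_{n,k}x_k$, whose norm, computed by testing on finitely supported sign sequences (which lie in $c_0\subset c$), equals $\sum_k|a_{n,k}|$. Regularity forces $(T_n(x))_n$ to converge, hence to be bounded, for every $x\in c$; the Banach--Steinhaus theorem then yields $\sup_n\|T_n\|<\infty$, which is exactly (i).

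\textbf{Main obstacle.} Everything but (i) is a truncation exercise. The real content is the gliding-hump step ensuring $\ell^1$ rows together with the uniform boundedness principle on $c$. It is precisely this use of Banach--Steinhaus that will, in the generalizations pursued later in the paper, have to be replaced by an ideal Banach--Steinhaus theorem, while the gliding-hump step is the piece that interacts most delicately with the ideals $\mathcal{I},\mathcal{J}$ and with the operator-valued entries.
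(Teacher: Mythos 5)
Your proof is correct and follows the standard route that the paper itself relies on: the paper does not prove Theorem \ref{thm:SilvermanToeplizsimple} directly (it cites Boos) but recovers it as the scalar, $\mathcal{I}=\mathcal{J}=\mathrm{Fin}$ case of Theorem \ref{main:IJREGULAR} together with Proposition \ref{prop:implications}, and when that general argument is unwound in the scalar case it reduces to exactly your steps — the three-term decomposition for sufficiency, test sequences $(1,1,\ldots)$ and $e^{(k)}$ for (ii) and (iii), and, for (i), membership of each row in the $\beta$-dual of $c$ (i.e.\ $\ell^1$) followed by Banach--Steinhaus on $c$. Your closing remark also correctly identifies the two ingredients (the gliding hump and the uniform boundedness principle) that the paper must replace by their ideal/operator analogues (Theorem \ref{thm:key} and Theorem \ref{thm:uniformJboundedness}) in the general setting.
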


Several extensions and analogues 
%of Theorem \ref{thm:SilvermanToeplizsimple} 
of the characterization above 
can be found in the literature. 
First, a \textquotedblleft multidimensional\textquotedblright\,version of Theorem \ref{thm:SilvermanToeplizsimple} for double sequences has been proved by George M. 
%Merritt 
Robinson \cite{Robinson26} and Hugh J. Hamilton \cite{MR1545904}.  
Second, in 1950 Abraham Robinson \cite{MR37371} proved the operator analogue of Theorem \ref{thm:SilvermanToeplizsimple} replacing each $a_{n,k}$ with a (possibly unbounded) linear operator $A_{n,k}$ acting on a given Banach space, cf. Theorem \ref{thm:originalIJREGULAR} below. 
Third, on a different direction, the author and Jeff Connor \cite{ConnorLeo} recently studied the ideal/filter version of the notion of regularity in the scalar case and proved that the analogue of Theorem \ref{thm:SilvermanToeplizsimple} holds in several, but not all, cases. 

The aim of this work is to provide a unifying framework which allows to shed light on the relationship between all the above results, to extend the latter ones, and to obtain, as a byproduct of the employed methods, several related characterizations.  
This will require us to deal with the theory of infinite matrices of linear operators and to prove a certain number of intermediate lemmas. 
Most results are formulated in the context of ideal convergence. We remark that this choice is not done for the sake of generality: indeed, for the above purposes, we will exploit the simple facts that $c(\mathcal{I}) \cap \ell_\infty$ is $c$ if $\mathcal{I}=\mathrm{Fin}$ and equals to $\ell_\infty$ if $\mathcal{I}$ is maximal, that the Pringsheim convergence of double sequences coincides with $\mathcal{I}$-convergence for a suitable ideal $\mathcal{I}$, etc. 
%Moreover, it is known 
An additional motivation comes from the fact that the study of ideals on countable sets and their representability 
may have some relevant potential for the study of the geometry of Banach spaces, see e.g. \cite{MR4124855, MR3436368, OrhanADDED, KwelaLeonetti, LeonettiCaprio}. 

Informally, we provide an operator version of the characterization of regular matrices in the context of ideal convergence, together with some sufficient conditions which allow for several substantial simplifications.  
The results depend on the boundedness assumption on the sequence spaces in the domain and/or codomain of such matrices. 
In addition, we provide a characterization of the matrix classes 
$(\ell_\infty, c_0(\mathcal{J})\cap \ell_\infty)$, $(c(\mathcal{I}), c_0(\mathcal{J})\cap \ell_\infty)$ $(\ell_\infty, \ell_\infty(\mathcal{J}))$, and $(c,\ell_\infty(\mathcal{J}))$ for certain ideals $\mathcal{I},\mathcal{J}$ on $\omega$, 
%which transform bounded [resp. convergent] sequences into sequences which are $\mathcal{J}$-bounded, for certain ideals $\mathcal{J}$ on $\omega$, 
see Corollary \ref{cor:llinftyc0}, Theorem \ref{thm:cIc_0bJ}, Theorem \ref{thm:maddoxmain}, and Theorem \ref{thm:maddoxmaincXellinfty}, respectively.  
Lastly, we obtain an ideal version of the Hahn--Schur theorem (which is used to prove that weak and norm convergence coincide on $\ell_1$), see Theorem \ref{thm:hahnschur}. 

The proofs of the main results are given in Section \ref{sec:mainproofs}.

%\begin{enumerate}[label=(\roman*)]
%\item an operator version of the characterization of regular matrices in the context of ideal convergence, together with some sufficient conditions which allow for a substatial simplification of the former; \textcolor{red}{Thms}
%\item A characterization of the class of matrices of linear operators which transform each bounded $\mathcal{J}$-convergent sequences into a $\mathcal{I}$-null sequence; 
%%, con operatore T, corollary $(c(\mathcal{I}), c_0)$?}
%\item  A characterization of the class of matrices of linear operators which transform each bounded sequences into a $\mathcal{I}$-null sequence; 
%%$\mathcal{J}\text{-}\limsup \|Ax\| \le M\cdot \mathcal{I}\text{-}\limsup_n\|x\|$
%\item
%\end{enumerate}
%\textcolor{red}{Do they really exist? Se $x$ limitato/conv e $y=1-x$ limitato/conv, allora $Ax \in c_0(\mathcal{I})$, $Ay \in c_0(\mathcal{I})$ implies $A(x+y)=A1$, con $I$ maximal? Si. Ma esiste una matrice in $(c(Z),c)$?} 

%\textcolor{red}{Structure}

%%%%%%%%%%%%%%%%%%%%%%%%%%%%%%%%%%%%%%%%%%%%%%%%%%%%%%%%

\section{Notations and Main Results}\label{sec:mainresults}

Let $\mathcal{I}$ be an ideal on the nonnegative integers $\omega$, that is, a collection of subsets of $\omega$ which is closed under subsets and finite unions. 
Unless otherwise stated, it is assumed that it contains the collection $\mathrm{Fin}$ of finite sets and it is different from the power set. 
Denote its dual filter by $\mathcal{I}^\star:=\{S\subseteq \omega: S^c \in \mathcal{I}\}$ and define $\mathcal{I}^+:=\{S\subseteq \omega: S\notin \mathcal{I}\}$. 
Among the most important ideals, we find the family of asymptotic density zero sets 
\begin{equation}\label{eq:definitionZ}
\mathcal{Z}:=\left\{S\subseteq \omega: \lim_{n\to \infty} \frac{|S \cap [0,n]|}{n+1}=0\right\}.
\end{equation}
We refer to \cite{MR2777744} %, MR3920747} 
%for recent surveys on ideals and associated filters.
for a recent survey on ideals and associated filters.

Let $V$ be a real Banach space, and denote its closed unit ball by $B_V$ and its unit sphere by $S_V$. 
Given a sequence $\bm{x}=(x_n)$ taking values in $V$ and an ideal $\mathcal{I}$ on $\omega$, we say that $\bm{x}$ is $\mathcal{I}$\emph{-convergent to} $\eta \in V$, shortened as $\mathcal{I}\text{-}\lim \bm{x}=\eta$ or $\mathcal{I}\text{-}\lim_n x_n=\eta$, if $\{n \in \omega: x_n \notin U\} \in \mathcal{I}$ for all neighborhood $U$ of $\eta$; for the clarify of exposition, all sequences taking values in $V$ will be written in bold. 
Note that $\mathcal{Z}$-convergence is usually called \emph{statistical convergence}, see e.g. \cite{MR1181163}. 
%all sequences taking values in $X$ will be written in bold). 
%Denote by $\Gamma_{\bm{x}}(\mathcal{I})$ the set of its $\mathcal{I}$\emph{-cluster points}, that is, the set of all $\eta \in X$ such that $\{n \in \omega: x_n \in U\} \notin \mathcal{I}$ for all neighborhoods $U$ of $\eta$. 
%Moreover, we say that $\bm{x}$ is $\mathcal{I}$\emph{-convergent to} $\eta \in X$, shortened as $\mathcal{I}\text{-}\lim \bm{x}=\eta$, if $\{n \in \omega: x_n \notin U\} \in \mathcal{I}$ for all neighborhood $U$ of $\eta$. 
%Note that $\mathcal{Z}$-cluster points and $\mathcal{Z}$-convergence are usually called \emph{statistical cluster points} and \emph{statistical convergence}, respectively, see e.g. \cite{MR1181163}. 
%
%\bigskip 
%Hereafter, unless otherwise stated, let $X$ and $Y$ be two Banach spaces. 
%Denote by $B_X$ and $S_X$ the closed unit ball and the unit sphere of $X$, respectively. 
As remarked in \cite[Example 3.4]{MR3671266}, the notion of $\mathcal{I}$-convergence include the well-known uniform, Pringsheim, and Hardy convergences for double sequences. 
In addition, if $\bm{y}$ is a real nonnegative sequence, we write $\mathcal{I}\text{-}\limsup \bm{y}:=\inf\{r \in \mathbf{R}\cup \{\infty\}: \{n \in \omega: y_n\ge r\}\in \mathcal{I}\}$.

Now, define the following sequence spaces 
\begin{displaymath}
\begin{split}
\ell_\infty(V)&:=\left\{\bm{x} \in V^\omega: \|\bm{x}\|<\infty\right\},\\
\ell_\infty(V,\mathcal{I})&:=\left\{\bm{x} \in V^\omega: \mathcal{I}\text{-}\limsup\nolimits_n\|x_n\|<\infty\right\},\\
c(V,\mathcal{I})&:=\left\{\bm{x} \in V^\omega: \mathcal{I}\text{-}\lim \bm{x}=\eta \text{ for some }\eta \in V\right\},\\
c_0(V,\mathcal{I})&:=\left\{\bm{x} \in V^\omega: \mathcal{I}\text{-}\lim \bm{x}=0\right\},\\
c_{00}(V,\mathcal{I})&:=\left\{\bm{x} \in V^\omega: \mathrm{supp}\,\bm{x} \in \mathcal{I}\right\},
\end{split}
\end{displaymath}
where $\|\bm{x}\|:=\sup_n\|x_n\|$ stands for the supremum norm and $\mathrm{supp}\,\bm{x}$ for the support $\{n \in \omega: x_n\neq 0\}$. 
Clearly $\ell_\infty(V)=\ell_\infty(V, \mathrm{Fin})$; 
sequences in $\ell_\infty(V,\mathcal{I})$ are usually called $\mathcal{I}$-bounded. 
If $V=\mathbf{R}$ and $\mathcal{I}=\mathrm{Fin}$, the above sequence spaces correspond to the usual $\ell_\infty$, $c$, $c_0$, and $c_{00}$, respectively. 
Every subspace of $\ell_\infty(V)$ will be endowed with the supremum norm. 
It is clear that $c_{00}(V,\mathcal{I})\subseteq c_{0}(V,\mathcal{I})\subseteq c(V,\mathcal{I})\subseteq \ell_\infty(V,\mathcal{I})$. However, unless $\mathcal{I}=\mathrm{Fin}$ or $V=\{0\}$, $c_{00}(V,\mathcal{I})$ is not contained in $\ell_\infty(V)$. 
Hence it makes sense to define the subspace 
$$
c^b(V,\mathcal{I}):=c(V,\mathcal{I})\cap \ell_\infty(V), 
$$
and, similarly, $c_{0}^b(V,\mathcal{I})$ and $c_{00}^b(V,\mathcal{I})$. 
%Lastly, let $\ell_\infty(V,\mathcal{I})$ be the set of $\mathcal{I}$-bounded sequences, that is
The symbol $V$ will be removed from the notation if it is understood from the context so that, e.g., $c_{00}(\mathcal{I})=c_{00}(V, \mathcal{I})$. Similarly, we may remove $\mathcal{I}$ in the case $\mathcal{I}=\mathrm{Fin}$. 
%The symbol $V$ will be removed from the notation if it is understood from the context so that, e.g., $c_{00}(\mathcal{I})=c_{00}(V, \mathcal{I})$. Similarly, we may remove $\mathcal{I}$ in the case $\mathcal{I}=\mathrm{Fin}$. 

At this point, let $X,Y$ be two Banach space and denote by $\mathcal{L}(X,Y)$ and $\mathcal{B}(X,Y)$ the vector spaces of linear operators from $X$ to $Y$ and its subspace of bounded linear operators, respectively. 
We assume that $\mathcal{L}(X,Y)$ and all its subspaces are endowed with the strong operator topology, so that a sequence $(T_n)$ of linear operators in $\mathcal{L}(X,Y)$ converges to $T \in \mathcal{L}(X,Y)$ if and only if $(T_nx)$ is convergent in the norm of $Y$ to $Tx$ for all $x \in X$.

Let $A=(A_{n,k}: n,k \in \omega)$ be an infinite matrix of linear operators $A_{n,k}\in \mathcal{L}(X,Y)$. 
% and fix an operator $T \in \mathcal{L}(X,Y)$. 
%\textcolor{red}{Cancellato $A_{\upharpoonright E}$} 
%For each $E\subseteq \omega$, let 
%$$
%A_{\upharpoonright E}=(A_{n,k}^\prime)
%$$
%be the matrix of linear operators defined by $A_{n,k}^\prime=A_{n,k}$ if $k \in E$ and $A_{n,k}^\prime=0$ otherwise. 
Moreover, 
for each $n \in \omega$ and $E\subseteq \omega$, let us write 
$$
A_{n,E}:=(A_{n,k}: k \in E)
$$
and $A_{n,\ge k}:=A_{n,\{k,k+1,\ldots\}}$ for the $k$th tail of the $n$th row of $A$. 
In particular, $A_{n,\omega}$ is the $n$th row of $A$ (and use an analogue notation for a sequence $(T_k)$ of operators so that, for instance, $T_{\ge 2}=(T_2,T_3,\ldots)$). 
For each $n \in \omega$ and $E\subseteq \omega$, define the the group norm 
$$
\|A_{n,E}\|:=
%\sup_{\substack{m \in \omega \\ m< |E|}} \sup_{x_0,\ldots,x_m \in B_X}\left\|\sum\nolimits_{k\le m}A_{n,k}x_k\right\|,
\sup\left\{\left\|\sum\nolimits_{k\in F}A_{n,k}x_k\right\|: F\subseteq E \text{ is finite and each }x_k \in B_X\right\},
$$
%where $B_X$ stands for closed unit ball of $X$, 
cf. \cite{MR0447877, MR568707} 
(in fact, \emph{every} $x_k$ can be chosen on the unit sphere $S_X$: this depends on the fact that, given distinct $a,b \in X$, the function $f: [0,1] \to \mathbf{R}$ defined by $f(t):=\|a+t(b-a)\|$ has a point of maximum in $t=0$ or in $t=1$: indeed, the segment $\{a+t(b-a): t \in [0,1]\}$ is contained in the closed ball with center $0$ and radius $\max\{\|a\|,\|b\|\}$, which is convex). 
Note that the value $\|A_{n,E}\|$ is possibly not finite. 
In addition, if $X=\mathbf{R}$ and $A_{n,E}$ is represented by the real sequence $(a_{n,k}: k \in E)$ then $\|A_{n,E}\|=\sum_{k \in E}|a_{n,k}|$. 

Given an $X$-valued sequence $\bm{x}=(x_n)$, let $A\bm{x}$ be its $A$-transform, that is, the sequence $A\bm{x}:=(A_n\bm{x}: n \in \omega)$ where
$$
\forall n \in \omega, \quad 
A_n\bm{x}:=\sum\nolimits_{k}A_{n,k}x_k, 
$$
provided that each series is convergent in the norm of $Y$. Accordingly, let $\mathrm{dom}(A)$ be the domain of $A$, that is, the family of those sequences $\bm{x}$ such that $A\bm{x}$ is well defined. 
For each sequence subspace $\mathscr{A}\subseteq X^\omega$ and $\mathscr{B}\subseteq Y^\omega$, let $(\mathscr{A}, \mathscr{B})$ be the set of matrices $A=(A_{n,k})$ of 
(not necessarily bounded) 
linear operators in $\mathcal{L}(X,Y)$ such that 
$$
\mathscr{A} \subseteq \mathrm{dom}(A)
\,\,\,\text{ and }\,\,\,
A\bm{x} \in \mathscr{B} \,\text{ for all }\bm{x} \in \mathscr{A}.
$$ 
We refer to \cite{MR568707} for the theory of infinite matrices of operators. 
In the scalar case, the relationship between summability and ideal convergence has been recently studied in \cite{Filipow18}. 

\subsection{Bounded to Bounded case}\label{subsec:boundedbounded}
The main definition of this work follows:
\begin{defi}\label{def:mainIJregular}
Let $\mathcal{I}, \mathcal{J}$ be ideals on $\omega$ and fix $T \in \mathcal{L}(X,Y)$. 
Then a matrix $A=(A_{n,k})$ of linear operators in $\mathcal{L}(X,Y)$ is said to be $(\mathcal{I}, \mathcal{J})$\emph{-regular with respect to $T$} if
$$
A \in (c^b(X,\mathcal{I}),c^b(Y,\mathcal{J})) 
\,\,\, \text{ and }\,\,\,
\mathcal{J}\text{-}\lim A\bm{x}=T(\,\mathcal{I}\text{-}\lim \bm{x})
\,\text{ for all }\bm{x} \in c^b(X,\mathcal{I}).
$$
\end{defi}
If $T$ is the identity operator $I$ on $X$ (namely, $Ix=x$ for all $x \in X$), we will simply say that $A$ is $(\mathcal{I},\mathcal{J})$\emph{-regular}. 
Note that if $\mathcal{I}=\mathcal{J}=\mathrm{Fin}$, $X=Y=\mathbf{R}$, 
and $T=I$, 
%and $T$ is the identity operator $I$ on $X$ (namely, $Ix=x$ for all $x \in X$), 
then Definition \ref{def:mainIJregular} corresponds to the ordinary regular matrices. 

The following result, essentially due to Robinson \cite[Theorem VII]{MR37371}, is the (unbounded) operator version of Theorem \ref{thm:SilvermanToeplizsimple},  
cf. also \cite[Theorem 1]{MR361524}.  
%(Hereafter, $I$ denotes the identity operator on $X$.) 
\begin{thm}\label{thm:originalIJREGULAR}
Fix a linear operator $T \in \mathcal{B}(X,Y)$, where $X,Y$ are Banach spaces. 
Then a matrix $A=(A_{n,k})$ of linear operators in $\mathcal{L}(X,Y)$ is $(\mathrm{Fin}, \mathrm{Fin})$-regular with respect to $T$ if and only if there exists $k_0 \in \omega$ such that\textup{:} 
\begin{enumerate}[label={\rm (\textsc{S}\arabic{*})}]
\item \label{item:S1} $\sup_n\|A_{n,\ge k_0}\|<\infty$\textup{;} 
% for some $k_0 \in \omega$\textup{;}
\item \label{item:S2} $\lim_n \sum_{k}A_{n,k}=T$\textup{;}
\item \label{item:S3} $\lim_n A_{n,k}=0$ for all $k \in \omega$\textup{.}
\end{enumerate}
\end{thm}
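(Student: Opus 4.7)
My plan has two directions.

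\textbf{Sufficiency.} Given $\bm{x} \in c(X)$ with $\eta := \lim_n x_n$, I would write $\bm{x} = (\eta, \eta, \ldots) + \bm{y}$ with $\bm{y} \in c_0(X)$. The constant part yields $\sum_k A_{n,k}\eta \to T\eta$ in $Y$ by (S2). For the null part, pick $K \ge k_0$ and split $\sum_k A_{n,k} y_k = \sum_{k<K} + \sum_{k \ge K}$: the finite head tends to $0$ as $n \to \infty$ for fixed $K$ by (S3), while rescaling against the definition of group norm gives
\[
\Big\|\sum\nolimits_{k \in F} A_{n,k} y_k\Big\| \;\le\; \sup_{k \ge K}\|y_k\|\cdot\sup_n\|A_{n,\ge k_0}\|
\]
for every finite $F \subseteq \{K, K+1, \ldots\}$. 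This simultaneously shows the tail series is Cauchy (hence convergent in $Y$) and uniformly $O(\sup_{k \ge K}\|y_k\|)$ in $n$; letting $K$ be large closes the argument.

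\textbf{Necessity.} Conditions (S2) and (S3) are immediate by substitution: apply the hypothesis to the constant sequence $(x, x, \ldots)$ to get $\sum_k A_{n,k} \to T$ in the strong operator topology, and to the sequence with $x$ at position $k$ and zeros elsewhere to get $A_{n,k} \to 0$ in the strong operator topology. The bulk of the argument is (S1), which I would obtain via the uniform boundedness principle after restricting the operators $L_n\bm{y} := \sum_k A_{n,k} y_k$ from $c_0(X)$ to a suitable closed subspace. Two preparatory facts are needed. First, for each $n$ the set $J_n := \{k : A_{n,k} \notin \mathcal{B}(X,Y)\}$ is finite: otherwise a gliding hump supported on $J_n$ produces $\bm{y} \in c_0(X)$ with $L_n\bm{y}$ not Cauchy, contradicting $c_0(X) \subseteq \mathrm{dom}(A)$. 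Second, $\bigcup_n J_n$ is finite. This is the heart of the argument, proved by a diagonal gliding hump: assuming the contrary, one inductively builds strictly increasing $(n_j)$ and $(k_j)$ with $A_{n_j, k_j}$ unbounded and $k_j \notin \bigcup_{m<j} J_{n_m}$ (possible since each $J_n$ is finite and $\bigcup_n J_n$ is infinite), together with vectors $x_j$ of very small norm but $\|A_{n_j, k_j} x_j\| \ge j + 3$, arranging at each step $j$ that (i) $n_j$ is large enough so that by (S3) the back-contribution $\sum_{i<j} \|A_{n_j, k_i} x_i\|$ in $A_{n_j}\bm{y}$ is at most $2^{-j}$, and (ii) $\|x_j\|$ is so small (relative to $\|A_{n_m, k_j}\|$ for $m<j$, finite by the choice of $k_j$) that $\|A_{n_m, k_j} x_j\| \le 2^{-j}$ for every $m<j$. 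The resulting $\bm{y}$ with $y_{k_j} := x_j$ lies in $c_0(X)$ and satisfies $\|A_{n_j}\bm{y}\| \ge j+1$, contradicting $L_n\bm{y} \to 0$.

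With $k_0 := 1 + \max \bigcup_n J_n$ (now finite), every $A_{n,k}$ with $k \ge k_0$ is bounded, and a within-row gliding hump further yields $\|A_{n, \ge k_0}\| < \infty$ for each $n$. Hence $L_n$ restricts to a bounded linear operator on the closed subspace $Z := \{\bm{y} \in c_0(X) : \mathrm{supp}\,\bm{y} \subseteq \{k_0, k_0+1, \ldots\}\}$ of norm $\|A_{n, \ge k_0}\|$. Pointwise boundedness on $Z$ (since $L_n\bm{y} \to 0$) together with the classical Banach--Steinhaus theorem on $Z$ delivers (S1). The principal obstacle is the diagonal gliding hump showing $\bigcup_n J_n$ finite: the operator-valued setting introduces off-diagonal interference absent from the scalar Silverman--Toeplitz theorem, controlled here by combining (S3) with the finiteness of each $J_n$ and careful bookkeeping.
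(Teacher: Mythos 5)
Your proof is correct, but for the necessity of \ref{item:S1} it takes a genuinely different route from the paper's. The paper obtains Theorem \ref{thm:originalIJREGULAR} as an immediate corollary of the general ideal version, Theorem \ref{main:IJREGULAR}, combined with Proposition \ref{prop:implications}.\ref{item:1simplification} and \ref{item:2simplification} (which show that for $\mathcal{I}=\mathcal{J}=\mathrm{Fin}$ conditions \ref{item:T2} and \ref{item:T3} are redundant, while \ref{item:T5} reduces to \ref{item:S3}); the necessity of \ref{item:T1} there is routed through Theorem \ref{thm:maddoxmaincXellinfty} and Theorem \ref{thm:ctoell(X)}, i.e., through the Lorentz--Macphail theorem (Theorem \ref{thm:lorentzmacphail}, applied to the decreasing subspaces $M_n=\{\bm{x}\in c(X): x_k=0 \text{ for }k<f(n)\}$) followed by the Banach--Steinhaus theorem. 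You instead run a self-contained two-stage gliding hump directly on the matrix entries: first within each row, to see that $J_n=\{k: A_{n,k}\notin\mathcal{B}(X,Y)\}$ is finite and that $\|A_{n,\ge k_0}\|<\infty$ row by row, and then diagonally across rows, using \ref{item:S3} to damp the back-contributions and the boundedness of $A_{n_m,k_j}$ (guaranteed by $k_j\notin\bigcup_{m<j}J_{n_m}$) to damp the forward ones, concluding that $\bigcup_n J_n$ is finite; only then do you invoke Banach--Steinhaus on the tail subspace $Z$. The two arguments are cousins --- the Lorentz--Macphail step is itself a gliding hump --- but yours avoids the general ideal machinery and the subspace formulation at the cost of heavier bookkeeping in the diagonal construction, while the paper's modular route buys uniformity over general (strongly selective) ideals $\mathcal{J}$. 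Your sufficiency direction (split $\bm{x}=(\eta,\eta,\ldots)+\bm{y}$ and estimate the tail by $\sup_{k\ge K}\|y_k\|\cdot\sup_n\|A_{n,\ge k_0}\|$ via the group norm) is essentially the specialization to $\mathcal{I}=\mathcal{J}=\mathrm{Fin}$ of the $\delta$-splitting over $E=\{n: n<k_0 \text{ or } \|x_n\|>\delta\}$ used in the proof of Theorem \ref{main:IJREGULAR}.
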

A variant for continuous linear operators between Fr\'{e}chet spaces has been proved by Ramanujan in \cite{MR216203}, cf. also \cite[Corollary 6]{MR1936721}. 
%A variant for continuous linear operators between Fr\'{e}chet spaces has been proved by Ramanujan in \cite{MR216203}. 
%More explicitly, condition \ref{item:S2} means that $\sum_k A_{n,k}x$ is norm convergent for all $n \in \omega$ and $x \in X$, in addition, $\lim_n \sum_kA_{n,k}x=x$ for all $x \in X$. 
%Similarly, condition \ref{item:S3} stands for $\lim_nA_{n,k}x=0$ for all $x \in X$. 

Notice that \ref{item:S3} can be rewritten as $\lim_n \sum_{k \in E}A_{n,k}x_k=0$ for all sequences $\bm{x} \in \ell_\infty(X)$ and $E \in \mathrm{Fin}$, which is also equivalent to $A  \in (c_{00}(X), c_0(Y))$. 
\begin{rmk}\label{rmk:evilgroupnorm}
Condition \ref{item:S3} may look equivalent also to: 
\begin{enumerate}[label={\rm (\textsc{S}\arabic{*})}]%$^{\sharp}$
\setcounter{enumi}{3}
\item \label{item:S3sharp} $\lim_n \|A_{n,k}\|=0$ for all $k \in \omega$. 
\end{enumerate}
This is correct if $X$ is finite dimensional, cf. Lemma \ref{lem:finitedimensionalnormkjfdhgd} below. 
However, \ref{item:S3sharp} is strictly stronger in general. For, set $X=\ell_2=\{x \in \ell_\infty: \sum_tx_t^2<\infty\}$, and define $A_{n,0}x=(0,\ldots,0,x_{n+1},x_{n+2},\ldots)$ for all $n \in \omega$ and $x \in \ell_2$, and $A_{n,k}=0$ whenever $k>0$. 
Then $\|A_{n,0}\|=1$ for all $n\in \omega$, and $\lim_n A_{n,0}x=0$ for all $x \in \ell_2$. 
\end{rmk}

%\begin{rmk}\label{rmk:regulardoesnotimplylinftylinfty}
%Another difference from the finite dimensional case is that a $(\mathrm{Fin}, \mathrm{Fin})$-regular matrix with respect to $T$ does not necessarily belong to $(\ell_\infty(X),\ell_\infty(Y))$. For, set $X=Y=\ell_\infty$, $T=I$, let $e_k$ be the $k$th unit vector of $X$ for each $k \in \omega$, and define $A_{0,k}x=x_0e_k$ for all $x \in X$ and $k \in\omega$, $A_{n,k}=I$ for all $n,k \in \omega$ with $n=k\ge 1$, and $A_{n,k}=0$ otherwise. 
%Then, directly from its definition, $A$ is a $(\mathrm{Fin},\mathrm{Fin})$-regular matrix with respect to $I$. 
%However, the sequence $(e,e,\ldots) \in \ell_\infty(X)$, where $e:=(1,1,\ldots)\in X$, does not belong to $\mathrm{dom}(A)$. Hence $A\notin (\ell_\infty(X),\ell_\infty(X))$, cf. Theorem \ref{thm:maddoxmain} below. \textcolor{red}{[Check?]}
%\end{rmk}

\begin{rmk}\label{rmk:regulardoesnotimplylinftylinfty}
Another difference from the finite dimensional case is that a $(\mathrm{Fin}, \mathrm{Fin})$-regular matrix 
%with respect to $T$ 
does not necessarily belong to $(\ell_\infty(X),\ell_\infty(Y))$. 
For, suppose that $X=Y=\ell_2$, 
%$T \in \mathcal{B}(X,Y)$, 
and let $e_k$ be the $k$th unit vector of $X$ for each $k \in \omega$. 
Building on the above example, consider the matrix $A=(A_{n,k})$ of linear operators in $\mathcal{L}(\ell_2,\ell_2)$ such that $A=\mathrm{Id}+B$, where $\mathrm{Id}$ is the identity matrix and 
\begin{displaymath}
\forall n,k \in \omega, \forall x \in \ell_2, \quad 
B_{n,k}(x):=
\begin{cases}
\,(0,\ldots,0,x_{n+1},x_{n+2},\ldots)\,\,& \text{if }k=0;\\
\,-x_{n+k}e_{n+k} & \text{if }k>0.
\end{cases}
\end{displaymath}
Then $A$ satisfies conditions \ref{item:S1}-\ref{item:S3} with $T=I$ and $k_0=0$, hence by Theorem \ref{thm:SilvermanToeplizsimple} $A$ is a $(\mathrm{Fin}, \mathrm{Fin})$-regular matrix. 
However, the sequence $\bm{x}:=(e_0,e_1,\ldots) \in \ell_\infty(\ell_2)$ does not belong to $\mathrm{dom}(A)$, indeed $A_0\bm{x}=e_0-e_1-e_2-\cdots$ is not norm convergent in $\ell_2$. 
Therefore $A\notin (\ell_\infty(X),\ell_\infty(Y))$, cf. Theorem \ref{thm:maddoxmain} below. 
\end{rmk}

Our first main result, which corresponds to the operator version of \cite[Theorem 1.2]{ConnorLeo}, follows. 
%In the case $X=\mathbf{R}$, the class of $(\mathcal{I}, \mathcal{J})$-regular matrices have been studied in \cite{ConnorLeo}. We prove below its operator analogue.
%\textcolor{red}{Main results}
\begin{thm}\label{main:IJREGULAR}
Fix a linear operator $T \in \mathcal{L}(X,Y)$, where $X,Y$ are Banach spaces. 
Let also $\mathcal{I}, \mathcal{J}$ be ideals on $\omega$. 
Then a matrix $A=(A_{n,k})$ of linear operators in $\mathcal{L}(X,Y)$ is $(\mathcal{I}, \mathcal{J})$-regular with respect to $T$ if and only if 
%
%
%Let $A=(A_{n,k})$ be an infinite matrix of 
%\textup{(}not necessarily bounded\textup{)} 
%linear operators from a Banach space $X$ into itself, and 
%let $\mathcal{I}, \mathcal{J}$ be ideals on $\omega$. 
%%
%%Moreover, if $\mathcal{I}\neq \mathrm{Fin}$ and $\mathrm{dim}(X)=\infty$, assume that 
%%$\lim\nolimits_k \|A_{n,\ge k}\|=0$ for all $n \in \omega$. 
%
%%\noindent 
%Then $A$ is $(\mathcal{I}, \mathcal{J})$-regular if and only 
there exists $k_0 \in \omega$ such that\textup{:} 
\begin{enumerate}[label={\rm (\textsc{T}\arabic{*})}]
\item \label{item:T1} $\sup_n\|A_{n,\ge k_0}\|<\infty$\textup{;}
\item \label{item:T2} 
%If $\mathcal{J}\neq \mathrm{Fin}$, then 
$\sup_n\|A_{n,k}x\|<\infty$ for all $x \in X$ and $k<k_0$\textup{;}
\item \label{item:T3} $\sum_kA_{n,k}x_k$ converges in the norm of $Y$ for all $\bm{x} \in c^b(X,\mathcal{I})$ and $n \in \omega$\textup{;}
%$A_{n,\omega} \in c^b(\mathcal{I})^\beta$ for all $n \in \omega$\textup{;}
\item \label{item:T4} $\mathcal{J}\text{-}\lim_n \sum_{k}A_{n,k}=T$\textup{;}
\item \label{item:T5} $A \in (c_{00}^b(X,\mathcal{I}), c_0(Y,\mathcal{J}))$\textup{.} 
%$A_{\upharpoonright E} \in (c^b(\mathcal{I}), c_0(\mathcal{J}))$ for all $E \in \mathcal{I}$\textup{.} 
\end{enumerate}
%Conversely, if $A$ if $(\mathcal{I}, \mathcal{J})$-regular, then it satisfies \ref{item:T1}-\ref{item:T3} and 
%\begin{enumerate}[label={\rm (\textsc{T}\arabic{*}$^{\flat}$)}]
%\setcounter{enumi}{3}
%\item \label{item:T4flat} $A_{\upharpoonright E} \in (c^b(\mathcal{I}), c_0(\mathcal{J}))$ for all $E \in \mathrm{Fin}$. 
%\end{enumerate}
In addition, if each $A_{n,k}$ is bounded, it is possible to choose $k_0=0$. 
\end{thm}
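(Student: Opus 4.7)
The plan is to prove both implications separately. For sufficiency, take $\bm{x} \in c^b(X,\mathcal{I})$ with $\mathcal{I}$-limit $\eta$, and write $\bm{x} = \bm{\eta} + \bm{y}$ with $\bm{\eta} := (\eta, \eta, \ldots)$ and $\bm{y} \in c_0^b(X,\mathcal{I})$. By \ref{item:T3} and linearity $A\bm{x} = A\bm{\eta} + A\bm{y}$, and \ref{item:T4} gives $\mathcal{J}\text{-}\lim A\bm{\eta} = T\eta$, so the goal reduces to showing $\mathcal{J}\text{-}\lim A\bm{y} = 0$. For each $\varepsilon > 0$ I would decompose $\bm{y}$ as a sum of three pieces: its restriction to $[0, k_0)$; its restriction to $S_\varepsilon := \{k \ge k_0 : \|y_k\| > \varepsilon\}$, which lies in $\mathcal{I}$ because $\bm{y}$ is $\mathcal{I}$-null; and a remainder $\bm{w}^\varepsilon$ supported in $[k_0, \infty)$ with $\|w_k^\varepsilon\| \le \varepsilon$. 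The first two pieces lie in $c_{00}^b(X,\mathcal{I})$, so \ref{item:T5} makes their $A$-transforms $\mathcal{J}$-null. For the third piece, the definition of the group norm combined with \ref{item:T1} yields $\|A_n \bm{w}^\varepsilon\| \le \varepsilon \sup_n \|A_{n, \ge k_0}\|$ uniformly in $n$. Letting $\varepsilon \to 0$ closes this direction.

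For necessity, four of the five conditions are straightforward. Condition \ref{item:T3} is exactly the inclusion $c^b(X,\mathcal{I}) \subseteq \mathrm{dom}(A)$. Condition \ref{item:T4} follows by evaluating $A$ on the constant sequence $(x, x, \ldots)$ for each $x \in X$. Condition \ref{item:T5} follows because every $\bm{x} \in c_{00}^b(X,\mathcal{I})$ has $\mathcal{I}$-limit $0$, so $\mathcal{J}\text{-}\lim A\bm{x} = T(0) = 0$. For \ref{item:T2}, applying $A$ to the sequence whose only nonzero entry is $x$ at position $k$ produces $(A_{n,k} x : n \in \omega) \in c^b(Y,\mathcal{J}) \subseteq \ell_\infty(Y)$, which forces $\sup_n \|A_{n,k} x\| < \infty$.

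The hard part is establishing \ref{item:T1}. I would argue by contradiction: assuming $\sup_n \|A_{n, \ge k}\| = \infty$ for every $k$, I would build inductively a sequence $\bm{x}^\star \in c_0^b(X,\mathcal{I})$ whose $A$-transform fails to lie in $\ell_\infty(Y)$, contradicting $A\bm{x}^\star \in c^b(Y,\mathcal{J})$. Concretely, one picks strictly increasing $(n_i)$ and $(k_i)$, finite blocks $F_i \subseteq [k_i, k_{i+1})$, and vectors $x_k \in B_X$ for $k \in F_i$ so that $\|\sum_{k \in F_i} A_{n_i, k} x_k\| \ge i$, then rescales (by $1/\sqrt{i}$, say) so that $\|x_k\| \to 0$ while the block contributions still diverge. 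Controlling the cross-block tails and the columns $k < k_0$ where $A_{n,k}$ may be unbounded is the subtle step, and I would appeal to the ideal Banach--Steinhaus theorem of De Bondt and Vernaeve cited in the abstract to rule out unwanted cancellations. This interplay between a gliding-hump construction and an ideal-measurable uniform boundedness principle is the main technical obstacle. The addendum falls out of the same analysis: if every $A_{n,k}$ is bounded then no column is exceptional, \ref{item:T2} is vacuous, and the hump argument applied to full rows yields $\sup_n \|A_{n, \omega}\| < \infty$, so $k_0 = 0$ is admissible.
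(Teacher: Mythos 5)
Your sufficiency argument and your derivations of \ref{item:T2}--\ref{item:T5} in the necessity direction are essentially the paper's: the paper likewise reduces to the $\mathcal{I}$-null case, isolates the $\mathcal{I}$-small set $E=\{k: k<k_0 \text{ or } \|x_k\|>\delta\}$ to be handled by \ref{item:T5}, and bounds the complementary part by $\delta\sup_n\|A_{n,\ge k_0}\|$ via the group norm. One point you skip: Definition \ref{def:mainIJregular} requires $A\bm{x}\in c^b(Y,\mathcal{J})$, so you must also check that $A\bm{x}$ is \emph{bounded}, not merely $\mathcal{J}$-convergent; this is exactly where \ref{item:T2} enters (as in \eqref{eq:boundedness}), and it is telling that your sufficiency argument never uses \ref{item:T2}. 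That is easily repaired.

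The genuine gap is the necessity of \ref{item:T1}, which you correctly identify as the hard step but do not actually prove. The difficulty your sketch does not confront is that, for unbounded entries, Lemma \ref{lem:convergenceoperatorbetadualc(X)} only gives a \emph{row-dependent} $f(n)$ with $\|A_{n,\ge f(n)}\|<\infty$; the whole content of \ref{item:T1} is that one $k_0$ works uniformly. A bare-hands gliding hump must then place each block $F_i$ to the right of the unbounded heads of all previously selected rows, guarantee that infinitely many usable rows remain at each stage, and keep the accumulated cross-terms under control -- and your appeal to the Banach--Steinhaus theorem ``to rule out unwanted cancellations'' misidentifies its role: it is not a cancellation lemma. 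The paper's route (via Theorem \ref{thm:ctoell(X)}) is structurally different: from $A\in(c(X),\ell_\infty(Y))$ it first applies the Lorentz--Macphail theorem (Theorem \ref{thm:lorentzmacphail}, which is where the hump construction actually lives) to produce a single $n_0$ such that every $A_n$ is bounded on the closed subspace $M_{n_0}=\{\bm{x}\in c(X): x_k=0 \text{ for } k<f(n_0)\}$, and only then uses Banach--Steinhaus to upgrade the pointwise bound $\sup_n\|A_n\bm{x}\|<\infty$ on $M_{n_0}$ to $\sup_n\|A_n\upharpoonright M_{n_0}\|<\infty$, which is precisely $\sup_n\|A_{n,\ge f(n_0)}\|<\infty$. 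Note also that for this theorem the classical Banach--Steinhaus suffices, since the codomain $c^b(Y,\mathcal{J})$ sits inside $\ell_\infty(Y)$; the De Bondt--Vernaeve ideal version you invoke is needed only for the unbounded-codomain variant (Theorem \ref{main:IJREGULARboundedUnbounded}). As it stands, your treatment of \ref{item:T1} is a plan rather than a proof, and the same applies to your justification of the addendum that $k_0=0$ can be taken when all $A_{n,k}$ are bounded.
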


For the sake of clarity, 
%condition \ref{item:T3} means that $A_n\bm{x}$ is well defined for all $n \in \omega$ and sequences $\bm{x} \in c^b(\mathcal{I})$; 
condition \ref{item:T4} means that $A_n(x,x,\ldots)$ is norm convergent for all $n \in \omega$ and $x \in X$ (which is weaker than \ref{item:T3}) and, in addition, $\mathcal{J}\text{-}\lim_n A_n(x,x,\ldots)=Tx$ for all $x \in X$. 
Lastly, condition \ref{item:T5} can be rephrased as: $A_n\bm{x}$ is norm convergent for all $n \in \omega$ and all bounded sequences $\bm{x}$ supported on $\mathcal{I}$ and, for such sequences, $\mathcal{J}\text{-}\lim A\bm{x}=0$. 
\begin{rmk}\label{rmk:conditionT5}
Note that \ref{item:T5} could be replaced also with the stronger condition: 
\begin{enumerate}[label={\rm (\textsc{T}\arabic{*}$^\prime$)}]
\setcounter{enumi}{4}
\item \label{item:T5prime} $A \in (c_{00}^b(X,\mathcal{I}), c_0^b(Y,\mathcal{J}))$.
\end{enumerate}
Indeed by Definition \ref{def:mainIJregular} the transformed sequence $A\bm{x}$ is necessarily bounded for all $\bm{x} \in c^b(X,\mathcal{I})$. 
Also, the latter condition \ref{item:T5prime} would imply automatically \ref{item:T2}. 
Therefore, $A$ is $(\mathcal{I},\mathcal{J})$-regular with respect to $T$ if and only if \ref{item:T1}, \ref{item:T3}, \ref{item:T4}, and \ref{item:T5prime} hold. 

However, we chose to state it in the former version for two reasons. 
First, if each $A_{n,k}$ is bounded, then \ref{item:T2} is void so that our characterization holds with the weaker condition \ref{item:T5}. 
Second, most importantly, condition \ref{item:T5} will be used also in the unbounded analogue given in Theorem \ref{main:IJREGULARboundedUnbounded} below: this allows to highlight the differences between the two cases.
\end{rmk}

Even if Theorem \ref{main:IJREGULAR} may look quite complicated, the reader should keep in mind that it deals with (possibly unbounded) linear operators and general ideal/filter convergence. 
We are going to see that, in some special circumstances, it may be considerably simplified because either some of the conditions \ref{item:T1}-\ref{item:T5} are automatically satisfied or the latter ones collapse to simpler properties (in particular, recovering the classical ones). 
%
%Several results related to 
%our main result 
Several related results 
%Theorem \ref{main:IJREGULAR} 
may be found in the literature in the case $X=Y$ equal to $\mathbf{R}$ or $\mathbf{C}$, $T$ equals to the identity operator $I$ or the zero operator, and $\mathcal{I},\mathcal{J}$ being certain $F_{\sigma\delta}$-ideals (where ideals are regarded as subsets of the Cantor space $\{0,1\}^\omega$), see e.g. \cite{MR3511151, 
MR1963462, MR3671266, MR3911031, MR1433948}. 

We remark also that, if $T$ is not bounded, then an $(\mathcal{I},\mathcal{J})$-regular matrix $A$ with respect to $T$ may not exist: indeed, if each $A_{n,k}$ is bounded and $\mathcal{J}=\mathrm{Fin}$, condition \ref{item:T4} and the Banach--Steinhaus theorem imply that $T$ is necessarily bounded.
%Filter-dependent versions of the latter result have been recently studied in \cite{MR4172859}, cf. Theorem \ref{thm:uniformJboundedness} below. 

In the case that $T=0$, we obtain the following immediate consequence, cf. also Corollary \ref{cor:llinftyc0} below for the finite dimensional case with $\mathcal{I}$ maximal. 
\begin{cor}\label{cor:ctoc0}
Let $X,Y$ be Banach spaces, and let also $\mathcal{I}, \mathcal{J}$ be ideals on $\omega$. 
Then a matrix $A=(A_{n,k})$ of linear operators in $\mathcal{L}(X,Y)$ belongs to $(c^b(X,\mathcal{I}), c^b_0(Y,\mathcal{J}))$ if and only if there exists $k_0 \in \omega$ such that \ref{item:T1}-\ref{item:T5} hold, with $T=0$. 

In addition, if each $A_{n,k}$ is bounded, it is possible to choose $k_0=0$. 
\end{cor}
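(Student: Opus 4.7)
The plan is to deduce this corollary as a direct specialization of Theorem \ref{main:IJREGULAR} to the case $T = 0$, so the bulk of the work is to verify that $(\mathcal{I},\mathcal{J})$-regularity with respect to $T = 0$ is the same condition as membership in the matrix class $(c^b(X,\mathcal{I}), c^b_0(Y,\mathcal{J}))$. Once that equivalence is in hand, the list of necessary and sufficient conditions \ref{item:T1}-\ref{item:T5} together with the statement about $k_0 = 0$ in the bounded case transfer verbatim from Theorem \ref{main:IJREGULAR}.

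First I would check the forward direction of the equivalence. Suppose $A$ is $(\mathcal{I},\mathcal{J})$-regular with respect to $T = 0$. Then by Definition \ref{def:mainIJregular} we have $A \in (c^b(X,\mathcal{I}), c^b(Y,\mathcal{J}))$, and for every $\bm{x} \in c^b(X,\mathcal{I})$ the transformed sequence satisfies $\mathcal{J}\text{-}\lim A\bm{x} = T(\mathcal{I}\text{-}\lim \bm{x}) = 0$. Since $A\bm{x}$ already lies in $\ell_\infty(Y)$ by the first part, it lies in $c_0(Y,\mathcal{J}) \cap \ell_\infty(Y) = c^b_0(Y,\mathcal{J})$, hence $A \in (c^b(X,\mathcal{I}), c^b_0(Y,\mathcal{J}))$.

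For the reverse direction, if $A \in (c^b(X,\mathcal{I}), c^b_0(Y,\mathcal{J}))$ and $\bm{x} \in c^b(X,\mathcal{I})$, then $A\bm{x} \in c^b_0(Y,\mathcal{J}) \subseteq c^b(Y,\mathcal{J})$, and $\mathcal{J}\text{-}\lim A\bm{x} = 0$, which equals $T(\mathcal{I}\text{-}\lim \bm{x})$ because $T = 0$. Thus $A$ is $(\mathcal{I},\mathcal{J})$-regular with respect to $T = 0$. The two properties being equivalent, I now invoke Theorem \ref{main:IJREGULAR} with $T = 0$ to conclude both the characterization via \ref{item:T1}-\ref{item:T5} and the fact that $k_0 = 0$ may be chosen whenever each $A_{n,k}$ is bounded.

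There is essentially no obstacle here: the argument is a bookkeeping exercise on definitions, not a new proof. The only subtlety worth flagging is that one must use the identity $c(Y,\mathcal{J}) \cap c_0(Y,\mathcal{J}) = c_0(Y,\mathcal{J})$ and remember that $c^b_0(Y,\mathcal{J}) = c_0(Y,\mathcal{J}) \cap \ell_\infty(Y)$, in order to conflate the a priori distinct codomain requirements of Definition \ref{def:mainIJregular} and of the matrix class $(c^b(X,\mathcal{I}), c^b_0(Y,\mathcal{J}))$.
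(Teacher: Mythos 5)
Your proposal is correct and matches the paper's treatment: the paper presents this corollary as an immediate consequence of Theorem \ref{main:IJREGULAR} with $T=0$, and your verification that $(\mathcal{I},\mathcal{J})$-regularity with respect to the zero operator is literally the same as membership in $(c^b(X,\mathcal{I}), c^b_0(Y,\mathcal{J}))$ is exactly the bookkeeping that justifies that claim.
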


It will be useful to define also the following properties: 
\begin{enumerate}[label={\rm (\textsc{T}\arabic{*}$^{\natural}$)}]
\setcounter{enumi}{2}
\item \label{item:T3natural} $\lim_k \|A_{n,\ge k}\|=0$ for all $n\in \omega$; 
\end{enumerate}
%and 
\begin{enumerate}[label={\rm (\textsc{T}\arabic{*}$^\flat$)}]
\setcounter{enumi}{5}
\item \label{item:T6flat} $\mathcal{J}\text{-}\lim_n \|A_{n,k}\|=0$ for all $k\in \omega$. %, \textcolor{red}{if $\mathrm{dim}(X)=\infty$ and $\mathcal{J}=\mathrm{Fin}$}.
\end{enumerate}
It is clear that \ref{item:S3sharp} corresponds to \ref{item:T6flat} in the case $\mathcal{J}=\mathrm{Fin}$. 
Some implications between the above-mentioned conditions 
%\ref{item:T1}-\ref{item:T5} and \ref{item:T3natural} and \ref{item:T6flat} are collected below. 
%textcolor{red}{[Commento, stesse notazioni]} 
are collected below. 
\begin{prop}\label{prop:implications}
With the same hypothesis of Theorem \ref{main:IJREGULAR}, the following hold\textup{:}
\begin{enumerate}[label={\rm (\roman*)}]
\item \label{item:1simplification} If $\mathcal{I}=\mathrm{Fin}$ then \ref{item:T1} and \ref{item:T4} imply \ref{item:T3}\textup{;}
\item \label{item:2simplification} If $\mathcal{J}=\mathrm{Fin}$ then \ref{item:T5} implies \ref{item:T2}\textup{;}
\item \label{item:3simplification} \ref{item:T1} and \ref{item:T3natural} imply \ref{item:T3}\textup{;}
\item \label{item:4simplification} If each $A_{n,k}$ is bounded, it is possible to choose $k_0=0$, hence \ref{item:T2} holds\textup{;}
\item \label{item:5simplification} If $\mathrm{dim}(X)<\infty$ each $A_{n,k}$ is bounded. Moreover, \ref{item:T1} implies \ref{item:T3natural}\textup{;}
\item \label{item:6simplification} If $\mathrm{dim}(X)<\infty$ and $A$ is $(\mathcal{I}, \mathcal{J})$-regular with respect to $T$, then \ref{item:T6flat} holds\textup{;}
% with $\mathcal{J}=\mathrm{Fin}$\textup{;}
\item \label{item:7simplification} If each $A_{n,k}$ is a multiple of  $A_0 \in \mathcal{L}(X,Y)$, then \ref{item:T1} implies \ref{item:T3natural} and $A_0\in \mathcal{B}(X,Y)$\textup{;}
\item \label{item:8simplification} If each $A_{n,k}$ is a multiple of  $A_0 \in \mathcal{L}(X,Y)$, then \ref{item:T1} and \ref{item:T5} imply \ref{item:T6flat}\textup{.}
% If $\mathrm{dim}(\mathrm{span}\{A_{n,k}: n,k \in \omega\})<\infty$ then (limitati??) \textcolor{red}{[Complete]}
\end{enumerate}
\end{prop}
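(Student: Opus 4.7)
My plan is to verify the eight items one at a time. A recurring device is the scaling inequality $\bigl\|\sum_{k\in F}A_{n,k}x_k\bigr\|\leq C\,\|A_{n,F}\|$, valid whenever $\|x_k\|\leq C$ (it follows from the definition of the group norm applied to $x_k/C\in B_X$). Besides this, the two substantive ingredients are the uniform boundedness principle and Lemma~\ref{lem:finitedimensionalnormkjfdhgd}, which for finite-dimensional $X$ equates strong and norm behaviour of sequences of operators.

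For \emph{(i)}, writing $\bm{x}\in c(X)$ as $(\eta,\eta,\ldots)+\bm{y}$ with $\bm{y}\to0$, the constant part is taken care of by \ref{item:T4} (which in particular asserts norm convergence of $\sum_kA_{n,k}\eta$) and the null part by \ref{item:T1}: given $\varepsilon>0$ and $\|y_k\|<\varepsilon$ for $k\geq K$, the scaling inequality gives $\bigl\|\sum_{k=N}^{N'}A_{n,k}y_k\bigr\|\leq\varepsilon\sup_n\|A_{n,\geq k_0}\|$, so partial sums are Cauchy. For \emph{(ii)}, apply \ref{item:T5} (with $\mathcal{J}=\mathrm{Fin}$) to the sequence supported only at index $k<k_0$ with value $x$; it lies in $c_{00}^b(X,\mathcal{I})$ because $\{k\}\in\mathrm{Fin}\subseteq\mathcal{I}$, so $A_{n,k}x\to0$ in $Y$ and \ref{item:T2} follows. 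For \emph{(iii)}, the scaling estimate combined with \ref{item:T3natural} gives $\bigl\|\sum_{k=N}^{N'}A_{n,k}x_k\bigr\|\leq\|\bm{x}\|\,\|A_{n,\geq N}\|\to0$, yielding Cauchy partial sums. For \emph{(iv)}, the Banach--Steinhaus theorem applied to the pointwise-bounded family $(A_{n,k})_n$ (bounded in $n$ by \ref{item:T2} and consisting of bounded operators by assumption) gives $\sup_n\|A_{n,k}\|_{op}<\infty$ for each $k<k_0$; subadditivity of the group norm then absorbs these finitely many columns into \ref{item:T1} with $k_0=0$, which makes \ref{item:T2} vacuous.

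For \emph{(v)}, the first clause is the standard fact that every linear map out of a finite-dimensional normed space is bounded; for the second, a fixed basis $e_1,\ldots,e_d$ of $X$ together with Lemma~\ref{lem:finitedimensionalnormkjfdhgd} reduce the tail estimation of the group norm to a scalar-type summability argument, yielding $\|A_{n,\geq k}\|\to0$. For \emph{(vi)}, since $A$ is $(\mathcal{I},\mathcal{J})$-regular we have \ref{item:T5}, whence the single-point test sequence gives $\mathcal{J}\text{-}\lim_nA_{n,k}x=0$ for every $x\in X$; Lemma~\ref{lem:finitedimensionalnormkjfdhgd} upgrades this to $\mathcal{J}\text{-}\lim_n\|A_{n,k}\|_{op}=0$, which is \ref{item:T6flat}.

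For \emph{(vii)} and \emph{(viii)}, write $A_{n,k}=c_{n,k}A_0$. The Minkowski identity $\bigl\{\sum_{k\in F}c_{n,k}x_k:x_k\in B_X\bigr\}=\{z\in X:\|z\|\leq\sum_{k\in F}|c_{n,k}|\}$ yields $\|A_{n,F}\|=\|A_0\|_{op}\sum_{k\in F}|c_{n,k}|$ when $A_0$ is bounded, and $\|A_{n,F}\|=\infty$ whenever $A_0$ is unbounded and some $c_{n,k}\neq0$ with $k\in F$. Hence \ref{item:T1} forces $A_0\in\mathcal{B}(X,Y)$ (outside the trivial case where all relevant $c_{n,k}$ vanish), and summability of $\sum_{k\geq k_0}|c_{n,k}|$ then gives \ref{item:T3natural}, proving (vii). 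For (viii), \ref{item:T5} on the single-point test sequence gives $\mathcal{J}\text{-}\lim_n|c_{n,k}|=0$ (pick $x$ with $A_0x\neq0$), and since $A_0\in\mathcal{B}(X,Y)$ by (vii), the identity $\|A_{n,k}\|_{op}=|c_{n,k}|\,\|A_0\|_{op}$ shows this is $\mathcal{J}$-null in $n$, i.e., \ref{item:T6flat}. The main delicate point is step (v.2): the group norm $\|A_{n,E}\|$ is in general strictly smaller than $\sum_{k\in E}\|A_{n,k}\|_{op}$ beyond the scalar case, so Lemma~\ref{lem:finitedimensionalnormkjfdhgd} has to do the real work of bridging the finite-dimensional gap.
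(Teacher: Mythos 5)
Items (i), (ii), (iii), (vi), (vii) and (viii) of your proposal are correct and essentially follow the paper's own route: (i) and (iii) simply inline the proofs of Lemma \ref{lem:convergenceoperatorbetadualc(X)} and Lemma \ref{lem:convergenceoperator}, which the paper cites directly, and (vi)--(viii) use the same single-point test sequences, the identity $\|A_{n,E}\|=\|A_0\|\sum_{k\in E}|a_{n,k}|$ (the paper's \eqref{eq:groupnormlinearcombination}) and Lemma \ref{lem:finitedimensionalnormkjfdhgd}. Your item (iv) is a genuinely different and more elementary argument: the paper deduces that $k_0=0$ can be chosen from $A\in (c(X),\ell_\infty(Y))$ via Theorem \ref{thm:ctoell(X)} (hence via the Lorentz--Macphail machinery), whereas you apply the Banach--Steinhaus theorem to the finitely many columns $k<k_0$, which are pointwise bounded by \ref{item:T2} and consist of bounded operators, and then absorb them into \ref{item:T1} by subadditivity of the group norm. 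This works whenever \ref{item:T1} and \ref{item:T2} already hold for some $k_0$, which is the situation in which (iv) is actually invoked, and it avoids the heavier preliminary theorems.

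The genuine gap is in the second clause of (v). You correctly single it out as the delicate point, but the tool you assign to it, Lemma \ref{lem:finitedimensionalnormkjfdhgd}, cannot do that work: its hypothesis is the \emph{pointwise} convergence $\mathcal{J}\text{-}\lim_k\|T_kx\|=0$ for every $x$, and its conclusion concerns the operator norms of the individual $T_k$. In (v) no pointwise convergence is available --- the only input is the uniform bound $\sup_n\|A_{n,\ge k_0}\|<\infty$ --- and the conclusion \ref{item:T3natural} concerns tails of \emph{group} norms, $\lim_k\|A_{n,\ge k}\|=0$, not the single quantities $\|A_{n,k}\|$. What is actually required is the implication ``$\|A_{n,\ge k_0}\|<\infty$ forces $\lim_k \|A_{n,\ge k}\|=0$ when $\dim X<\infty$,'' which the paper extracts from Corollary \ref{cor:finitedimensionalVSextremepoints} together with Lemma \ref{lem:convergenceoperator}; the proof of that corollary is a separate blocking-plus-pigeonhole argument (choosing blocks $I_j$ with $\|\sum_{k\in I_j}A_{n,k}x_k\|>\varepsilon$ and locating infinitely many of the resulting vectors in a common closed quadrant so that their norms add), and this is precisely where the finite-dimensionality is spent. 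Your ``scalar-type summability argument'' would need the lower bound of the type \eqref{eq:upperlowerboundsnormmatrix} relating $\|A_{n,E}\|$ to the coordinatewise absolute sums, and neither that bound nor the pigeonhole substitute appears in your sketch; as written, (v) is asserted rather than proved.
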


%\textcolor{red}{If $\mathrm{dim}\{A_{n,k}:n,k \in \omega\}<\infty$ include matrici scalari????}

It is immediate to check that Theorem \ref{thm:originalIJREGULAR} comes as a corollary, putting together Theorem \ref{main:IJREGULAR} and Proposition \ref{prop:implications}.\ref{item:1simplification} and \ref{item:2simplification}. 

However, the 
%The
usefulness of a characterization of $(\mathcal{I}, \mathcal{J})$-regular matrices with respect to $T$ comes from the practical easiness to check whether conditions \ref{item:T1}-\ref{item:T5} hold together.  
Taking into account the implications given in Proposition \ref{prop:implications}, it is evident that \ref{item:T5} is the most demanding in this direction. 
Hence it makes sense to search for sufficient conditions which allow us to simplify it. 
In the same spirit of \cite[Theorem 1.3]{ConnorLeo}, which studies the case $X=Y=\mathbf{R}$ and $T=I$, we obtain 
%in the results below 
characterizations of 
such matrices 
%$(\mathcal{I}, \mathcal{J})$-regular matrices with respect to $T$ 
which avoid condition \ref{item:T5}. 
We will need the new and much \textquotedblleft easier\textquotedblright\,condition:
\begin{enumerate}[label={\rm (\textsc{T}\arabic{*})}]
\setcounter{enumi}{5}
\item \label{item:T6} $\mathcal{J}\text{-}\lim_n \|A_{n,E}\|=0$ for all $E \in \mathcal{I}$. 
\end{enumerate} 
Directly by the definition of group norm, it is clear that \ref{item:T6} implies \ref{item:T5} (and also \ref{item:T6flat}). 
This means that we are allowed to replace \ref{item:T5} with the stronger condition \ref{item:T6} provided that the latter is satisfied for matrices $A$ which are $(\mathcal{I}, \mathcal{J})$-regular with respect to $T$, possibly under some additional constraints. 

\begin{thm}\label{thm:JfinIJregularmain}
With the same hypotheses of Theorem \ref{main:IJREGULAR}, suppose, in addition, that $\mathcal{J}$ is countably generated and that conditions \ref{item:T3natural} and \ref{item:T6flat} hold. 

Then $A$ is $(\mathcal{I}, \mathcal{J})$-regular with respect to $T$ if and only if there exists $k_0 \in \omega$ such that \ref{item:T1}, \ref{item:T4}, and \ref{item:T6} hold. 
In addition, if each $A_{n,k}$ is bounded, it is possible to choose $k_0=0$. 
\end{thm}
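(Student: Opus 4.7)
The strategy is to reduce to Theorem~\ref{main:IJREGULAR} and to replace the delicate condition \ref{item:T5} by the much tighter \ref{item:T6}. For the ``if'' direction, let $k_0$ witness \ref{item:T1}, \ref{item:T4}, and \ref{item:T6}; then \ref{item:T3} is supplied by Proposition~\ref{prop:implications}.\ref{item:3simplification} out of \ref{item:T1} together with \ref{item:T3natural}, and \ref{item:T5} is immediate from \ref{item:T6} because $\|A_n\bm{x}\|\le\|A_{n,E}\|$ whenever $\bm{x}$ is supported on $E\in\mathcal{I}$ with $\|\bm{x}\|\le 1$; condition \ref{item:T2} is arranged as in the analogous step of Theorem~\ref{main:IJREGULAR}. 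The clause about $k_0=0$ in the bounded case is inherited from the corresponding clause in Theorem~\ref{main:IJREGULAR}.

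Conversely, suppose $A$ is $(\mathcal{I},\mathcal{J})$-regular with respect to $T$. Theorem~\ref{main:IJREGULAR} already furnishes $k_0$ satisfying \ref{item:T1}--\ref{item:T5}, so the only task is to upgrade \ref{item:T5} to \ref{item:T6}. I would argue by contradiction: assume there exist $E\in\mathcal{I}$, $\varepsilon>0$, and $S\in\mathcal{J}^+$ with $\|A_{n,E}\|\ge\varepsilon$ for every $n\in S$. Using the countable generation of $\mathcal{J}$, fix an increasing sequence $B_1\subseteq B_2\subseteq\cdots$ in $\mathcal{J}$ such that every $J\in\mathcal{J}$ lies inside some $B_m$. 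A sliding-hump diagonal construction then yields inductively integers $0=k_1<k_2<\cdots$, indices $n_1<n_2<\cdots$ with $n_j\in S\setminus B_j$, finite sets $F_j\subseteq E\cap[k_j,k_{j+1})$, and vectors $(x_k^{(j)})_{k\in F_j}\subseteq B_X$ realizing $\bigl\|\sum_{k\in F_j}A_{n_j,k}x_k^{(j)}\bigr\|\ge\varepsilon/2$.

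At the $j$-th step, with $k_j$ already chosen, condition \ref{item:T6flat} makes the set $\{n:\|A_{n,k}\|\ge\varepsilon/(4k_j)\}$ a member of $\mathcal{J}$ for each $k<k_j$, so the finite union over such $k$, together with $B_j$ and $\{0,\ldots,n_{j-1}\}$, still lies in $\mathcal{J}$; since $S\notin\mathcal{J}$, one can pick $n_j\in S$ outside this union, forcing $\sum_{k<k_j}\|A_{n_j,k}\|<\varepsilon/4$. Then \ref{item:T3natural} for the row $n_j$ yields $k_{j+1}>k_j$ with $\|A_{n_j,\ge k_{j+1}}\|<\varepsilon/8$, and subadditivity of the group norm gives $\|A_{n_j,E\cap[k_j,k_{j+1})}\|\ge\varepsilon-\varepsilon/4-\varepsilon/8>\varepsilon/2$, providing $F_j$ and the $x_k^{(j)}$. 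Now define $\bm{x}\in X^\omega$ by $x_k:=x_k^{(j)}$ if $k\in F_j$ (for the unique such $j$) and $x_k:=0$ otherwise: its support lies in $\bigcup_j F_j\subseteq E$, so $\bm{x}\in c_{00}^b(X,\mathcal{I})$. Splitting $A_{n_j}\bm{x}$ into the three contributions from $[0,k_j)$, $[k_j,k_{j+1})$, and $[k_{j+1},\infty)$, the outer pieces are bounded in norm by $\varepsilon/4$ and $\varepsilon/8$ respectively, while the central piece has norm $\ge\varepsilon/2$; hence $\|A_{n_j}\bm{x}\|\ge\varepsilon/8$ for every $j$. Since $B_1\subseteq B_2\subseteq\cdots$ and $n_j\notin B_j$, the set $\{n_j:j\ge 1\}$ is not contained in any $B_m$, so it lies in $\mathcal{J}^+$, contradicting $\mathcal{J}$-$\lim A\bm{x}=0$ which is guaranteed by \ref{item:T5}. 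The main obstacle is precisely this balancing act: controlling the ``past'' cross terms needs \ref{item:T6flat} together with the countable generation of $\mathcal{J}$, whereas controlling the ``future'' tails needs \ref{item:T3natural}, and neither hypothesis can be removed.
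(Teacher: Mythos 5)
Your proposal is correct and follows essentially the same route as the paper: the sliding-hump construction in your only-if direction is precisely the content of the paper's Theorem~\ref{thm:key} and Corollary~\ref{cor:nontrivialJFin} (which the paper proves once as a standalone tool, using the same three ingredients --- countable generation of $\mathcal{J}$, \ref{item:T3natural} to kill the tails, and \ref{item:T6flat} to kill the initial segments --- and then simply cites here), while your if direction is the same reduction to Theorem~\ref{main:IJREGULAR}. The only loose point is your treatment of \ref{item:T2}, which you wave at rather than derive, but the paper's own proof is equally brief there (it invokes Proposition~\ref{prop:implications}.\ref{item:2simplification}), so this does not distinguish your argument from the published one.
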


On a similar direction, recall that, if $X,Y$ are vector lattices, then a linear operator $T \in \mathcal{L}(X,Y)$ is said to be \emph{positive} if 
$Tx\ge 0$ 
%in $Y$ 
whenever $x\ge 0$. 
%in $X$. 
%$$
%\forall x\ge 0, \quad Tx\ge 0.
%$$
In addition, a Banach space $V$ is called an \emph{AM-space} if $V$ is also a vector lattice such that $0\le x\le y$ implies $\|x\| \le \|y\|$, and $\|x \vee y\|=\max\{\|x\|,\|y\|\}$ for all $x,y \ge 0$; 
we say that  $e \in V$ is an \emph{order unit} if, for all $x \in V$ there exists $n \in \omega$ such that $-ne \le x\le ne$. 
Accordingly, if $V\neq \{0\}$, then necessarily $e>0$. 
Examples of AM-spaces with order units include $\ell_\infty$ and $C(K)$ spaces, for some compact Hausdorff space $K$. 
%\textcolor{red}{Examples}
We refer to \cite{MR2011364, MR2262133} for the underlying theory on vector lattices. 

\begin{thm}\label{thm:AMspaceregularmain}
With the same hypotheses of Theorem \ref{main:IJREGULAR}, suppose, in addition, that $X$ is an AM-space with order unit $e$, $Y$ is a Banach lattice, each $A_{n,k}$ is a positive linear operator, and that  condition \ref{item:T3natural} holds. 

Then $A$ is $(\mathcal{I}, \mathcal{J})$-regular with respect to $T$ if and only if \ref{item:T1}, \ref{item:T4}, and \ref{item:T6} hold with $k_0=0$. 
\end{thm}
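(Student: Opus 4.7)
My plan is to apply Theorem \ref{main:IJREGULAR} with $k_0=0$, showing that positivity upgrades condition \ref{item:T5} to the much simpler \ref{item:T6}. First I would normalize so that $\|e\|=1$ and record that $B_X = \{x \in X : |x| \le e\}$, a standard property of an AM-space with order unit. For any $x \in B_X$, positivity of $A_{n,k}$ and monotonicity of the lattice norm on $Y$ yield $\|A_{n,k}x\| \le \|A_{n,k}|x|\| \le \|A_{n,k}e\|$, so each $A_{n,k}$ is bounded; hence Theorem \ref{main:IJREGULAR} permits $k_0=0$ and makes \ref{item:T2} vacuous.

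The crux of the argument is the identity
\[
\|A_{n,E}\| = \left\|\textstyle\sum\nolimits_{k \in E} A_{n,k}\,e\right\|
\]
for all $n \in \omega$ and $E \subseteq \omega$, where the right-hand side is the norm limit of the partial sums $\sum_{k \in F} A_{n,k}e$ over finite $F \subseteq E$; convergence in $Y$ is supplied by \ref{item:T3natural} via a Cauchy argument on these positive partial sums, with uniform control in $n$ coming from \ref{item:T1}. The inequality $\le$ follows from $|\sum_{k \in F} A_{n,k}x_k| \le \sum_{k \in F} A_{n,k}e$ (positivity plus $|x_k| \le e$) combined with the monotone lattice norm; the reverse inequality is witnessed by the admissible choice $x_k := e$.

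Given this identity, the two directions are quick. For the \emph{only if} direction, Theorem \ref{main:IJREGULAR} already supplies \ref{item:T1} and \ref{item:T4}; to deduce \ref{item:T6} I would fix $E \in \mathcal{I}$ and test regularity against the sequence $\bm{x} \in c_{00}^b(X,\mathcal{I})$ defined by $x_k := e$ if $k \in E$ and $x_k := 0$ otherwise: since $\mathcal{I}\text{-}\lim \bm{x}=0$, regularity forces $\mathcal{J}\text{-}\lim_n A_n\bm{x}=0$, and $\|A_n\bm{x}\| = \|A_{n,E}\|$ by the identity. For the \emph{if} direction, \ref{item:T3} follows from \ref{item:T1} and \ref{item:T3natural} via Proposition \ref{prop:implications}.\ref{item:3simplification}, and for every $\bm{x} \in c_{00}^b(X,\mathcal{I})$ with $E := \mathrm{supp}(\bm{x}) \in \mathcal{I}$ the bound $\|A_n\bm{x}\| \le \|\bm{x}\|_\infty \|A_{n,E}\|$ combined with \ref{item:T6} yields \ref{item:T5}, so Theorem \ref{main:IJREGULAR} closes the argument. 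The main technical obstacle is the identity above: passing from the supremum over finite $F$ to the norm of a norm-convergent series of positive elements is where positivity, the AM-space structure, and \ref{item:T3natural} all pull their weight.
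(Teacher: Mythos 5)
Your proposal is correct and follows essentially the same route as the paper: the paper proves your key identity $\|A_{n,E}\|=\|\sum_{k\in E}A_{n,k}e\|$ as Proposition \ref{prop:Anonnegativelema} (via the Kakutani representation $B_X=[-e,e]$ and monotonicity of the lattice norm), packages it as Corollary \ref{cor:Anonnegativemain}, and then runs exactly your two-direction argument through Theorem \ref{main:IJREGULAR}, testing against $e\cdot\bm{1}_E$ for the only-if direction and using \ref{item:T6} $\Rightarrow$ \ref{item:T5} together with Proposition \ref{prop:implications}.\ref{item:3simplification} for the converse. The only cosmetic difference is that you derive boundedness of each $A_{n,k}$ directly from $|x|\le e$, while the paper cites the general fact that positive operators between Banach lattices are automatically continuous; both are valid.
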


In the finite dimensional case, everything is simpler. Indeed, suppose that $X=\mathbf{R}^d$ and $Y=\mathbf{R}^m$, for some integers $d,m\ge 1$. Then each linear operator $A_{n,k}$ is represented by the real matrix $[\,a_{n,k}(i,j): 1\le i\le m, 1\le j\le d\,]$ and $T$ is represented by the real matrix $[\,t(i,j): 1\le i\le m, 1\le j\le d\,]$ . %Lastly, for all $1\le i,j\le d$, define $\delta_{i,j}=1$ if $i=j$ and $\delta_{i,j}=0$ otherwise. \textcolor{red}{Cambiare con $X\neq Y$!!!!}
%\begin{cor}\label{cor:finitedimensionmain}
%Let $\mathcal{I},\mathcal{J}$ be ideals on $\omega$, suppose that $X=\mathbf{R}^d$ and $Y=\mathbf{R}^m$, and fix a linear operator $T \in \mathcal{L}(X,Y)$. 
%Then a matrix $A=(A_{n,k})$ of linear operators in $\mathcal{L}(X,Y)$ is $(\mathcal{I}, \mathcal{J})$-regular with respect to $T$ if and only if\textup{:}
%% \ref{item:T1}, \ref{item:T4}, and \ref{item:T6} hold. 
%\begin{enumerate}[label={\rm (\textsc{F}\arabic{*})}]
%\item \label{item:F1} $\sup_n \sum_k\sum_{i,j}\left|a_{i,j}(n,k)\right|<\infty$\textup{;}
%\item \label{item:F4} $\mathcal{J}\text{-}\lim_n \sum_k a_{i,j}(n,k)=t_{i,j}$ for all $1\le i \le m$ and $1\le j\le d$\textup{;}
%\item \label{item:F5} $\mathcal{J}\text{-}\lim_n \sum_{k\in E}\sum_{j}a_{i,j}(n,k)x_{j}=0$ for all $E \in \mathcal{I}$, $1\le i\le m$, and bounded sequences $(x^{(0)},x^{(1)},\ldots)$ with values $\mathbf{R}^d$ and supported on $E$\textup{.}
%\end{enumerate}
%In addition, if $\mathcal{I}=\mathrm{Fin}$ or $\mathcal{J}=\mathrm{Fin}$ or $a_{i,j}(n,k)\ge 0$ for all $1\le i \le m$, $1\le j\le d$, and $n,k \in \omega$, then \ref{item:F5} can be replaced with\textup{:}
%\begin{enumerate}[label={\rm (\textsc{F}\arabic{*})}]
%\setcounter{enumi}{3}
%\item \label{item:F6} $\mathcal{J}\text{-}\lim_n \sum_{k\in E}\sum_{i,j}\left|a_{i,j}(n,k)\right|=0$ for all $E \in \mathcal{I}$\textup{.}
%\end{enumerate}
%\end{cor}
\begin{cor}\label{cor:finitedimensionmain}
With the same hypotheses of Theorem \ref{main:IJREGULAR}, suppose that $X=\mathbf{R}^d$, $Y=\mathbf{R}^m$, and that $\mathcal{I}=\mathrm{Fin}$ or $\mathcal{J}$ is countably generated or $a_{n,k}(i,j)\ge 0$ for all $1\le i \le m$, $1\le j\le d$, and $n,k \in \omega$. 
Then $A$ is $(\mathcal{I}, \mathcal{J})$-regular with respect to $T$ if and only if\textup{:}
% \ref{item:T1}, \ref{item:T4}, and \ref{item:T6} hold. 
\begin{enumerate}[label={\rm (\textsc{F}\arabic{*})}]
\item \label{item:F1} $\sup_n \sum_k\sum_{i,j}\left|a_{n,k}(i,j)\right|<\infty$\textup{;}
\item \label{item:F4} $\mathcal{J}\text{-}\lim_n \sum_k a_{n,k}(i,j)=t(i,j)$ for all $1\le i \le m$ and $1\le j\le d$\textup{;}
\item \label{item:F6} $\mathcal{J}\text{-}\lim_n \sum_{k\in E}\sum_{i,j}\left|a_{n,k}(i,j)\right|=0$ for all $E \in \mathcal{I}$\textup{.}
\end{enumerate}
\end{cor}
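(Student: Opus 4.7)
The plan is to deduce Corollary \ref{cor:finitedimensionmain} directly from Theorem \ref{main:IJREGULAR}, Theorem \ref{thm:JfinIJregularmain}, Theorem \ref{thm:AMspaceregularmain}, and Proposition \ref{prop:implications}, after translating the group-norm conditions \ref{item:T1}, \ref{item:T4}, \ref{item:T6} into the coordinatewise form \ref{item:F1}, \ref{item:F4}, \ref{item:F6}. To this end I would fix (say) the supremum norms on $X=\mathbf{R}^d$ and $Y=\mathbf{R}^m$; by Proposition \ref{prop:implications}.\ref{item:5simplification} every $A_{n,k}$ is automatically bounded, so we may take $k_0=0$ and condition \ref{item:T2} becomes vacuous.

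The first routine step is the translation of norms. Since all norms on finite-dimensional spaces are equivalent, I would show that there exist constants $c,C>0$ (depending only on $d,m$) such that for every $E\subseteq \omega$
\[
c\sum_{k\in E}\sum_{i,j}|a_{n,k}(i,j)|\,\le\,\|A_{n,E}\|\,\le\,C\sum_{k\in E}\sum_{i,j}|a_{n,k}(i,j)|.
\]
The upper inequality is the triangle inequality combined with the bound $\|A_{n,k}\|_{\mathrm{op}}\le \sum_{i,j}|a_{n,k}(i,j)|$; for the lower one I would fix indices $i,j$, test with $x_k=\mathrm{sgn}(a_{n,k}(i,j))\,e_j$, and read off the $i$-th coordinate of $\sum_{k\in F}A_{n,k}x_k$ for finite $F\subseteq E$, then sum over $i,j$. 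This yields \ref{item:T1}$\Leftrightarrow$\ref{item:F1} and \ref{item:T6}$\Leftrightarrow$\ref{item:F6}. An analogous computation, evaluating \ref{item:T4} on each constant sequence $(e_j,e_j,\ldots)$ and extracting coordinates, gives \ref{item:T4}$\Leftrightarrow$\ref{item:F4}.

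The sufficiency direction works without any of the three special hypotheses. If \ref{item:F1}, \ref{item:F4}, \ref{item:F6} hold, then \ref{item:T1}, \ref{item:T4}, \ref{item:T6} hold with $k_0=0$; \ref{item:T6} trivially implies \ref{item:T5}; Proposition \ref{prop:implications}.\ref{item:5simplification} supplies \ref{item:T3natural}, and then Proposition \ref{prop:implications}.\ref{item:3simplification} yields \ref{item:T3}. Applying Theorem \ref{main:IJREGULAR} concludes that $A$ is $(\mathcal{I},\mathcal{J})$-regular with respect to $T$.

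The necessity direction is where the three hypotheses enter. Assume $A$ is $(\mathcal{I},\mathcal{J})$-regular. Theorem \ref{main:IJREGULAR} gives \ref{item:T1}--\ref{item:T5} with $k_0=0$, so \ref{item:F1} and \ref{item:F4} follow at once from the translations; the only missing piece is \ref{item:F6}, equivalently \ref{item:T6}. If $\mathcal{I}=\mathrm{Fin}$, every $E\in\mathcal{I}$ is finite, Proposition \ref{prop:implications}.\ref{item:6simplification} supplies \ref{item:T6flat}, and a finite sum of $\mathcal{J}$-null sequences is $\mathcal{J}$-null, so \ref{item:T6} follows. If $\mathcal{J}$ is countably generated, Proposition \ref{prop:implications}.\ref{item:5simplification} and \ref{item:6simplification} furnish the hypotheses \ref{item:T3natural} and \ref{item:T6flat} of Theorem \ref{thm:JfinIJregularmain}, which then provides \ref{item:T6}. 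Finally, if every $a_{n,k}(i,j)\ge 0$, then $X=\mathbf{R}^d$ with the supremum norm is an AM-space with order unit $e=(1,\ldots,1)$, $Y=\mathbf{R}^m$ is a Banach lattice, and each $A_{n,k}$ is a positive operator; with \ref{item:T3natural} again coming from Proposition \ref{prop:implications}.\ref{item:5simplification}, Theorem \ref{thm:AMspaceregularmain} delivers \ref{item:T6}. The only minor obstacle is choosing the right norm in the AM-space case, since being an AM-space with order unit requires the supremum norm specifically; but this is harmless, as the entrywise conditions \ref{item:F1}--\ref{item:F6} are invariant under equivalent norms.
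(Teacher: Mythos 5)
Your proof is correct and follows essentially the same route as the paper's: translate \ref{item:T1}, \ref{item:T4}, \ref{item:T6} into entrywise form via equivalence of norms in finite dimensions, obtain sufficiency directly from Theorem \ref{main:IJREGULAR} without the extra hypotheses, and split necessity into the three cases handled respectively by Proposition \ref{prop:implications}.\ref{item:6simplification} plus the triangle inequality for group norms, by Theorem \ref{thm:JfinIJregularmain}, and by Theorem \ref{thm:AMspaceregularmain}. The only cosmetic difference is that the paper works with the $1$-norms rather than the supremum norms (yielding slightly different constants in the two-sided estimate), and your observation that the AM-space case really requires the supremum norm on $\mathbf{R}^d$ -- harmless because conditions \ref{item:F1}--\ref{item:F6} are invariant under equivalent norms -- is a point the paper passes over silently.
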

It is remarkable that the \textquotedblleft easier\textquotedblright\,characterization with condition \ref{item:F6} does \emph{not} hold uniformly for all ideals $\mathcal{I}, \mathcal{J}$: indeed, it has been proved in \cite[Theorem 1.4]{ConnorLeo} that, even in the simplest case $X=Y=\mathbf{R}$ and $T=I$, there exists a $(\mathcal{Z}, \mathcal{Z})$-regular matrix which does not satisfy \ref{item:F6}, where $\mathcal{Z}$ is the asymptotic density zero ideal defined in \eqref{eq:definitionZ}. 
In addition, condition \ref{item:F6} can be simplified if $T$ is the zero operator and $\mathcal{I}$ a maximal ideal:
% (or, equivalently, if $\mathcal{I}^\star$ is a free ultrafilter on $\omega$). 
\begin{cor}\label{cor:llinftyc0}
With the same hypotheses of Corollary \ref{cor:finitedimensionmain}, $A \in (\ell_\infty(\mathbf{R}^d), c_0^b(\mathbf{R}^m, \mathcal{J}))$ if and only if condition \ref{item:F1} holds, together with:
\begin{enumerate}[label={\rm (\textsc{F}\arabic{*}$^\prime$)}]
\setcounter{enumi}{2}
\item \label{item:F6prime} $\mathcal{J}\text{-}\lim_n \sum_{k}\sum_{i,j}\left|a_{n,k}(i,j)\right|=0$.
\end{enumerate}
\end{cor}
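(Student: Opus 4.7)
The sufficient direction is immediate. Under \ref{item:F1} and \ref{item:F6prime}, for each $\bm{x} \in \ell_\infty(\mathbf{R}^d)$ the series $\sum_k A_{n,k}x_k$ converges absolutely in $\mathbf{R}^m$ by \ref{item:F1}, so $A_n\bm{x}$ is well-defined, and the elementary estimate
\[
\|A_n \bm{x}\|_\infty \leq \|\bm{x}\|_\infty \sum_k \sum_{i,j} |a_{n,k}(i,j)|
\]
together with \ref{item:F1} and \ref{item:F6prime} shows that $A\bm{x}$ is bounded and $\mathcal{J}$-null, i.e., $A\bm{x} \in c^b_0(\mathbf{R}^m, \mathcal{J})$.

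For necessity, assume $A \in (\ell_\infty(\mathbf{R}^d), c^b_0(\mathbf{R}^m, \mathcal{J}))$. To derive \ref{item:F1}, since each $A_n\bm{x}$ is defined for all $\bm{x} \in \ell_\infty(\mathbf{R}^d)$, a standard sign-choice argument against sequences of the form $(\epsilon_k e_j)_k$ with $\epsilon_k \in \{\pm 1\}$ and $e_j$ the $j$-th unit vector of $\mathbf{R}^d$ forces $\sum_k \sum_{i,j}|a_{n,k}(i,j)| < \infty$ for every $n$; in finite dimension each $A_n : \ell_\infty(\mathbf{R}^d) \to \mathbf{R}^m$ is then a bounded linear operator whose operator norm is equivalent to this row sum. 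Boundedness of $A\bm{x}$ for every bounded $\bm{x}$ triggers the Banach--Steinhaus theorem, yielding \ref{item:F1}.

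For \ref{item:F6prime}, the plan is to reduce to the scalar case coordinatewise: for each pair $(i,j)$, testing $A$ on sequences $\bm{x} = (\xi_k e_j)_k$ with $\bm{\xi} \in \ell_\infty$ and reading the $i$-th coordinate of $A_n\bm{x}$ shows that the scalar matrix $B^{(i,j)}:= (a_{n,k}(i,j))_{n,k}$ belongs to the scalar class $(\ell_\infty, c_0^b(\mathbf{R}, \mathcal{J}))$ and has bounded absolute row sums. The proof thus reduces to the scalar ideal Schur-type assertion: for any scalar matrix $B=(b_{n,k})$ with $\sup_n \sum_k |b_{n,k}|<\infty$ lying in $(\ell_\infty, c^b_0(\mathbf{R}, \mathcal{J}))$, one has $\mathcal{J}\text{-}\lim_n \sum_k|b_{n,k}| = 0$. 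Summing over the finitely many pairs $(i,j)$ then produces \ref{item:F6prime}.

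The main obstacle is precisely this scalar ideal Schur statement. It is the content one expects from the ideal Hahn--Schur theorem (Theorem \ref{thm:hahnschur}), after verifying $\mathcal{J}\text{-}\lim_n b_{n,k}=0$ for each fixed $k$ by testing $A$ against singleton-supported sequences. Alternatively, one can route through Corollary \ref{cor:finitedimensionmain} applied with an auxiliary maximal ideal $\mathcal{I}_0$ on $\omega$, for which $c^b(\mathbf{R}^d, \mathcal{I}_0) = \ell_\infty(\mathbf{R}^d)$ via ultrafilter limits, so that $A$ is $(\mathcal{I}_0,\mathcal{J})$-regular with respect to $T=0$ and condition \ref{item:F6} holds for every $E\in \mathcal{I}_0$; one would then upgrade \ref{item:F6} to \ref{item:F6prime} by a gliding-hump contradiction: if \ref{item:F6prime} failed on a $\mathcal{J}$-positive set $N$ of rows, using the umbrella hypothesis that $\mathcal{J}$ be countably generated or that entries be nonnegative, one would assemble a single $E\in\mathcal{I}_0$ along which $\sum_{k\in E}\sum_{i,j}|a_{n,k}(i,j)|$ stays bounded away from zero on a $\mathcal{J}$-positive subset of $N$, contradicting \ref{item:F6}.
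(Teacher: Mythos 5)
Your argument is correct on its main branch but follows a genuinely different route from the paper, and it leaves one branch of the hypotheses uncovered. The paper's proof is a two-line reduction: pick two maximal ideals $\mathcal{I},\mathcal{I}^\prime$ with $2\omega$ in one and $2\omega+1$ in the other; since $c^b(\mathbf{R}^d,\mathcal{I})=c^b(\mathbf{R}^d,\mathcal{I}^\prime)=\ell_\infty(\mathbf{R}^d)$, Corollary \ref{cor:finitedimensionmain} with $T=0$ applies to both pairs $(\mathcal{I},\mathcal{J})$ and $(\mathcal{I}^\prime,\mathcal{J})$, and \ref{item:F6} evaluated at just the two sets $E=2\omega$ and $E=2\omega+1$ already yields \ref{item:F6prime} --- no gliding hump or auxiliary construction is needed to upgrade \ref{item:F6} to \ref{item:F6prime}. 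Your main route instead goes coordinatewise and invokes the ideal Hahn--Schur theorem (Theorem \ref{thm:hahnschur}); this is legitimate and non-circular (that theorem is proved from Theorems \ref{thm:key} and \ref{thm:keyfakeinfty}, independently of this corollary), and your derivation of \ref{item:F1} via sign choices plus the classical Banach--Steinhaus theorem is fine because the codomain sits inside $\ell_\infty(\mathbf{R}^m)$, so ordinary uniform boundedness suffices. What each approach buys: yours exhibits the corollary as a finite-dimensional instance of the Hahn--Schur phenomenon, while the paper's exploits the ultrafilter description of $\ell_\infty(\mathbf{R}^d)$ to recycle Corollary \ref{cor:finitedimensionmain} wholesale.

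The gap: Theorem \ref{thm:hahnschur} is stated only for countably generated $\mathcal{J}$, whereas the hypotheses of Corollary \ref{cor:finitedimensionmain} also allow the alternative that all $a_{n,k}(i,j)\ge 0$ with $\mathcal{J}$ an arbitrary ideal. Your Hahn--Schur route does not cover that branch, and your fallback sketch (assembling a single $E\in\mathcal{I}_0$ by a gliding hump on a $\mathcal{J}$-positive set of rows) is left unexecuted and is in any case unnecessary. In the nonnegative branch the claim is immediate: test $A$ against the constant sequence $(u,u,\ldots)$ with $u=(1,\ldots,1)$; the $i$th coordinate of $A_n(u,u,\ldots)$ equals $\sum_k\sum_j a_{n,k}(i,j)=\sum_k\sum_j\left|a_{n,k}(i,j)\right|$, and $\mathcal{J}$-convergence to $0$ of the transform, summed over the finitely many $i$, is exactly \ref{item:F6prime}. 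Add that one line and your proof is complete.
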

This provides a generalization of \cite[Lemma 3.2]{MR2209588} in the case $d=m=1$, $T=0$, and $\mathcal{J}$ equal to the countably generated ideal $\mathcal{I}_{\mathrm{P}}$ defined below in \eqref{eq:definitionIP}.

A similar result can be obtained if each $A_{n,k}$ is a multiple of a given linear operator:
\begin{cor}\label{cor:eachAnkmultiple}
With the same hypotheses of Theorem \ref{main:IJREGULAR}, suppose that each $A_{n,k}$ is a multiple of a nonzero $A_0 \in \mathcal{L}(X,Y)$, so that 
$
A_{n,k}=a_{n,k}A_0
$ 
for all $n,k \in \omega$. 
In addition, assume that $\mathcal{I}=\mathrm{Fin}$, or $\mathcal{J}$ is countably generated, or that $X$ is an AM-space with order unit $e$, $Y$ is a Banach lattice, and $a_{n,k}\ge 0$ for all $n,k \in \omega$. 

Then $A$ is $(\mathcal{I}, \mathcal{J})$-regular with respect to $T$ if and only if\textup{:}
% \ref{item:T1}, \ref{item:T4}, and \ref{item:T6} hold. 
\begin{enumerate}[label={\rm (\textsc{M}\arabic{*})}]
\item \label{item:M0} $A_0 \in \mathcal{B}(X,Y)$\textup{;}
\item \label{item:M1} $\sup_n \sum_k\left|a_{n,k}\right|<\infty$\textup{;}
\item \label{item:M4} $T=\kappa A_0$, with $\kappa=\mathcal{J}\text{-}\lim_n \sum_k a_{n,k}$\textup{;}
\item \label{item:M6} $\mathcal{J}\text{-}\lim_n \sum_{k\in E}\left|a_{n,k}\right|=0$ for all $E \in \mathcal{I}$\textup{.}
\end{enumerate}
\end{cor}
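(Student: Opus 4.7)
The plan is to reduce Corollary \ref{cor:eachAnkmultiple} to Theorem \ref{main:IJREGULAR}, Proposition \ref{prop:implications}, and (depending on the hypothesis) either Theorem \ref{thm:JfinIJregularmain} or a direct argument. The key simplification is that, when $A_{n,k}=a_{n,k}A_0$ with $A_0\in\mathcal{B}(X,Y)$ nonzero, one has the closed-form group norm
$$
\|A_{n,E}\|=\|A_0\|\sum\nolimits_{k\in E}|a_{n,k}|,
$$
obtained by factoring $\sum_{k\in F}a_{n,k}A_0x_k=A_0\bigl(\sum_{k\in F}a_{n,k}x_k\bigr)$ and observing that $\sum_{k\in F}a_{n,k}x_k$ ranges over the whole closed ball of radius $\sum_{k\in F}|a_{n,k}|$ as $(x_k)$ varies in $B_X^F$. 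Hence \ref{item:T1} with $k_0=0$ translates into \ref{item:M0}$+$\ref{item:M1}, \ref{item:T6} translates into \ref{item:M6}, and evaluating $\sum_kA_{n,k}$ on constant sequences $(x,x,\ldots)$ shows \ref{item:T4} is \ref{item:M4}.

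For the sufficiency direction, assuming \ref{item:M0}--\ref{item:M6}, I would verify conditions \ref{item:T1}--\ref{item:T5} of Theorem \ref{main:IJREGULAR} with $k_0=0$: \ref{item:T1}, \ref{item:T2} (vacuous), and \ref{item:T4} are immediate from the translations above; \ref{item:T3} follows by pulling $A_0$ out of the series and using absolute convergence from \ref{item:M1}; and \ref{item:T5} comes from the bound $\|A_n\bm{x}\|\le\|A_0\|\,\|\bm{x}\|\sum_{k\in E}|a_{n,k}|$ for bounded $\bm{x}$ supported on $E\in\mathcal{I}$, together with \ref{item:M6}. This direction uses none of the three alternative hypotheses.

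For the necessity direction, Theorem \ref{main:IJREGULAR} gives $k_0\in\omega$ for which \ref{item:T1}--\ref{item:T5} hold, and Proposition \ref{prop:implications}.\ref{item:7simplification} yields \ref{item:M0} together with \ref{item:T3natural}; the addendum of Theorem \ref{main:IJREGULAR} then permits $k_0=0$, so \ref{item:T1} and \ref{item:T4} give \ref{item:M1} and \ref{item:M4}. Only \ref{item:M6} is delicate, and it is here that the three-way hypothesis enters. When $\mathcal{J}$ is countably generated, Proposition \ref{prop:implications}.\ref{item:7simplification} and \ref{item:8simplification} supply \ref{item:T3natural} and \ref{item:T6flat}, so Theorem \ref{thm:JfinIJregularmain} applies and its characterization forces \ref{item:T6}, i.e.\ \ref{item:M6}. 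When $\mathcal{I}=\mathrm{Fin}$, Proposition \ref{prop:implications}.\ref{item:8simplification} gives $\mathcal{J}\text{-}\lim_n|a_{n,k}|=0$ for each $k$, and finite additivity of $\mathcal{J}$-limits upgrades this to \ref{item:M6} on every finite $E$. When $a_{n,k}\ge 0$, fixing $x_0\in X$ with $A_0x_0\ne 0$ and testing \ref{item:T5} against the bounded sequence $\bm{x}$ with $x_k:=x_0$ for $k\in E$ and $x_k:=0$ otherwise produces $A_n\bm{x}=\bigl(\sum_{k\in E}a_{n,k}\bigr)A_0x_0$, so that $\mathcal{J}\text{-}\lim_n\sum_{k\in E}a_{n,k}=0$; nonnegativity then converts this into \ref{item:M6}.

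The main obstacle is this third case: the positivity hypothesis of Theorem \ref{thm:AMspaceregularmain} requires each $A_{n,k}$ to be a positive operator, whereas here only the scalars $a_{n,k}$ are assumed nonnegative and $A_0$ itself need not be positive, so that theorem cannot be invoked directly. The rank-one structure fortunately provides enough flexibility to bypass the AM-space machinery via the single-witness argument above.
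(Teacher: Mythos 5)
Your proof is correct, and its overall architecture — the group-norm identity $\|A_{n,E}\|=\|A_0\|\sum_{k\in E}|a_{n,k}|$, the reduction of \ref{item:M0}--\ref{item:M6} to \ref{item:T1}, \ref{item:T4}, \ref{item:T6} with $k_0=0$ via Proposition \ref{prop:implications}.\ref{item:7simplification}--\ref{item:8simplification}, and the use of Theorem \ref{thm:JfinIJregularmain} when $\mathcal{J}$ is countably generated — coincides with the paper's. Where you genuinely diverge is the third case: the paper disposes of \ref{item:M6} by asserting it holds \textquotedblleft for the same reasons as in the proof of Corollary \ref{cor:finitedimensionmain},\textquotedblright\ whose nonnegative case invokes Theorem \ref{thm:AMspaceregularmain}; as you correctly observe, that theorem requires each $A_{n,k}$ to be a \emph{positive operator}, which does not follow from $a_{n,k}\ge 0$ when $A_0$ is not positive (and the natural test sequence $e\cdot\bm{1}_E$ of Corollary \ref{cor:Anonnegativemain} can even fail if $A_0e=0$). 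Your single-witness argument — testing \ref{item:T5} on $x_k=x_0$ for $k\in E$ with $A_0x_0\neq 0$, so that $A_n\bm{x}=(\sum_{k\in E}a_{n,k})A_0x_0$, and then using nonnegativity to pass from $\mathcal{J}\text{-}\lim_n\sum_{k\in E}a_{n,k}=0$ to \ref{item:M6} — exploits the rank-one structure to bypass the lattice machinery entirely; it is not only valid but repairs the imprecision in the paper's citation, and it shows that in this case the AM-space and Banach-lattice hypotheses on $X$ and $Y$ are not actually needed, only $a_{n,k}\ge 0$ and $A_0\neq 0$.
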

In particular, under the hypotheses of Corollary \ref{cor:eachAnkmultiple}, 
%(modulo trivial cases, namely, $A_0=0$ or $a_{n,k}=0$ for all $n,k \in \omega$) 
if $A$ is $(\mathcal{I},\mathcal{J})$-regular with respect to a linear operator $T$, then $T$ is necessarily bounded. 
%\textcolor{red}{Why Maddox assumes it?}

%\textcolor{red}{Note that this result is new even in dim=2. T6 con J=Fin.}

In the following sections, we obtain the analogues of Theorem \ref{main:IJREGULAR} where we replace the bounded sequence spaces $c^b(X,\mathcal{I})$ and $c^b(Y,\mathcal{J})$ of Definition \ref{def:mainIJregular} with their unbounded versions $c(X,\mathcal{I})$ and $c(Y,\mathcal{J})$. 
%\textcolor{red}{In intro?}

\subsection{Bounded to Unbounded case} 
%Assume $\mathcal{J}\neq \mathrm{Fin}$, otherwise we go back to the previous case. 

An ideal $\mathcal{J}$ on $\omega$ is said to be a \emph{rapid}$^+$\emph{-ideal} if, 
for every $S \in \mathcal{J}^+$ and $F \in \mathrm{Fin}^+$, 
there exists $S^\prime \subseteq S$ such that $S^\prime \in \mathcal{J}^+$ and $|S^\prime \cap [0,n]|\le |F \cap [0,n]|$ for all $n\in \omega$.
Moreover, $\mathcal{J}$ is called a $P^+$\emph{-ideal} if, for every decreasing sequence $(S_n)$ in $\mathcal{J}^+$, there exists $S \in \mathcal{J}^+$ such that $S\setminus S_n$ is finite for all $n \in \omega$. The class of rapid$^+$ and $P^+$-ideals have been studied also, e.g., in \cite{MR4172859, MR2876731, MR2777744, MR1649074}. 
The ideal $\mathcal{J}$ is said to be \emph{countably generated} if there exists a sequence $(Q_j)$ of subsets of $\omega$ such that $S \in \mathcal{J}$ if and only if $S\subseteq \bigcup_{j \in F}Q_j$ for some $F \in \mathrm{Fin}$. 

Moreover, an ideal $\mathcal{J}$ on $\omega$ is said to be \emph{selective} if, for every decreasing sequence $(S_n)$ in $\mathcal{J}^+$, there exists $S \in \mathcal{J}^+$ such that $S\setminus [0,n] \subseteq S_n$ for all $n \in \omega$, see e.g. \cite[Definition 7.3]{MR2603812}. (They were introduced by Mathias in \cite{MR0491197} under the name of happy families.) It is easy to see, directly from the definitions, that every selective ideal is a rapid$^+$ $P^+$-ideal. More precisely, it is known that an ideal $\mathcal{J}$ on $\omega$ is selective if and only if $\mathcal{J}$ is a $P^+$-ideal and, in addition, for every sequence $(F_n)$ of finite subsets of $\omega$ with $F:=\bigcup_n F_n \in \mathcal{J}^+$, there exists $S \subseteq F$ such that $S \in \mathcal{J}^+$ and $|S \cap F_n|\le 1$ for all $n \in \omega$, see e.g. \cite[Lemma 7.4]{MR2603812}. We refer to \cite[Section 1]{MR3666945} for a list of known examples of selective ideals. 
It is a folklore fact that every $F_\sigma$-ideal is a $P^+$-ideal, see e.g. \cite{MR748847}. 
However, the summable ideal $\mathcal{I}_{1/n}:=\{S\subseteq \omega: \sum_{n \in S}1/(n+1)<\infty\}$ is an $F_\sigma$-ideal which is not rapid$^+$, and $\mathcal{Z}$ is neither a rapid$^+$ nor $P^+$-ideal (hence, they are not selective). 
This does not mean that the topological complexity of selective ideals is low: indeed, under Martin's axiom for countable posets, there exist uncountably many nonisomorphic maximal selective ideals (on the other hand, their existence is not provable in ZFC), see \cite[Section 5.1]{MR4172859}. 

For the next results, we need a slightly stronger version of selectivity: an ideal $\mathcal{J}$ on $\omega$ is called \emph{strongly selective} if, for every decreasing sequence $(S_n)$ in $\mathcal{J}^+$, there exists $S=\{x_n: n \in \omega\} \in \mathcal{J}^+$ such that $x_{n+1} \in S_{x_n}$ for all $n \in \omega$. (Here, we are using $(x_n: n \in \omega)$ for the canonical enumeration of $S$. We are not aware whether this naming has been used somewhere else.) Of course, every strongly selective ideal is also selective. 
Note that every countably generated ideal is strongly selective (hence also rapid$^+$ and $P^+$); in particular, $\mathrm{Fin}$ is a strongly selective ideal, cf. Remark \ref{rmk:farahisomorphiccountablygeneratedideals} below. More generally, it is easy to see that, if $\mathcal{J}$ is an ideal on $\omega$ with the Baire property, then $\mathcal{J}$ is selective if and only if it is strongly selective. Moreover, as pointed out by the referee, maximal strongly selective ideals coincide with maximal selective ideals, thanks to known characterizations of the latter ones (in particular, by the observations above, their existence can shown under Martin's axiom).

%\textcolor{red}{T2flat is implied by T5}.

Lastly, we need the following weakening of condition \ref{item:T1} (and they coincide if $\mathcal{J}=\mathrm{Fin}$):
\begin{enumerate}[label={\rm (\textsc{T}\arabic{*}$^\flat$)}]
\setcounter{enumi}{0}
\item \label{item:T1flat} There exists $J_0 \in \mathcal{J}^\star$ 
%, with $J_0=\omega$ if $\mathcal{J}=\mathrm{Fin}$, 
for which $\sup_{n \in J_0}\|A_{n,\ge k_0}\|<\infty$ and, for all $n \in \omega\setminus J_0$, there exists $f(n) \in \omega$ such that $\|A_{n,\ge f(n)}\|<\infty$\textup{.} 
%\textcolor{red}{Togliere Fin}
%\item \label{item:T2flat} \textcolor{red}{$\mathcal{J}\text{-}\limsup_n\|A_{n,k}x\|<\infty$ for all $x \in X$ and $k<k_0$\textup{.}}
\end{enumerate}
This condition has been suggested by the example given in \cite[Section 4]{ConnorLeo}. 

With these premises, we state the analogue of Theorem \ref{main:IJREGULAR} for the unbounded codomain sequence spaces. 
%\textcolor{red}{Intro rapid+, include countable generated drewnowski, corollary RH regular}
\begin{thm}\label{main:IJREGULARboundedUnbounded}
Fix a linear operator $T \in \mathcal{L}(X,Y)$, where $X,Y$ are Banach spaces. 
Let also $\mathcal{I},\mathcal{J}$ be an ideal on $\omega$ such that $\mathcal{J}$ is a strongly selective ideal. 

Then a matrix $A=(A_{n,k})$ of linear operators in $\mathcal{L}(X,Y)$ satisfies
\begin{equation}\label{eq:boundedunboundeddefinitiona}
A \in (c^b(X,\mathcal{I}),c(Y,\mathcal{J})) 
\,\,\, \text{ and }\,\,\,
\mathcal{J}\text{-}\lim A\bm{x}=T(\,\mathcal{I}\text{-}\lim \bm{x})
\,\text{ for all }\bm{x} \in c^b(X,\mathcal{I}).
\end{equation}
%belongs to $(c^b(X,\mathcal{I}), c(Y,\mathcal{J}))$ and satisfies $\mathcal{J}\text{-}\lim A\bm{x}=T(\mathcal{I}\text{-}\lim \bm{x})$ for all $\bm{x} \in c^b(X,\mathcal{I})$ 
if and only if 
there exists $k_0 \in \omega$ such that \ref{item:T1flat}, \ref{item:T3}, \ref{item:T4}, and \ref{item:T5} hold.  
%together with\textup{:} 
%\begin{enumerate}[label={\rm (\textsc{T}\arabic{*}$^\flat$)}]
%\setcounter{enumi}{0}
%\item \label{item:T1flat} There exists $J_0 \in \mathcal{J}^\star$, with $J_0=\omega$ if $\mathcal{J}=\mathrm{Fin}$, for which $\sup_{n \in J_0}\|A_{n,\ge k_0}\|<\infty$ and, for all $n \in \omega\setminus J_0$, there exists $f(n) \in \omega$ such that $\|A_{n,\ge f(n)}\|<\infty$\textup{;} \textcolor{red}{Togliere Fin}
%\item \label{item:T2flat} \textcolor{red}{$\mathcal{J}\text{-}\limsup_n\|A_{n,k}x\|<\infty$ for all $x \in X$ and $k<k_0$\textup{.}}
%\end{enumerate} 

In addition, if each $A_{n,k}$ is bounded, it is possible to choose $k_0=f(n)=0$ for all $n\in \omega\setminus J_0$. 
\end{thm}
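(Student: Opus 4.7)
\emph{Plan.} I would mirror the blueprint of Theorem~\ref{main:IJREGULAR}, with two modifications dictated by the relaxation of the codomain from $c^b(Y,\mathcal{J})$ to $c(Y,\mathcal{J})$: the uniform tail bound \ref{item:T1} weakens to the ``$\mathcal{J}^\star$-local'' version \ref{item:T1flat}, and the strong selectivity of $\mathcal{J}$ is exploited via the ideal Banach--Steinhaus theorem of De Bondt--Vernaeve.

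For sufficiency, the plan is to decompose an arbitrary $\bm{x} \in c^b(X,\mathcal{I})$ with $\mathcal{I}$-limit $\eta$ as $(\eta,\eta,\ldots) + \bm{y}$ with $\bm{y} \in c_0^b(X,\mathcal{I})$, handling the constant part by \ref{item:T4} and reducing to showing $\mathcal{J}\text{-}\lim A\bm{y} = 0$. I would further split $\bm{y} = \bm{y}^{(1)} + \bm{y}^{(2)}$, where $\bm{y}^{(1)}$ is supported in $E_\varepsilon := \{k : \|y_k\| \ge \varepsilon\} \in \mathcal{I}$ (treated by \ref{item:T5}) and $\|\bm{y}^{(2)}\| \le \varepsilon$. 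For $n \in J_0$ from \ref{item:T1flat}, the tail $\sum_{k \ge k_0} A_{n,k} y^{(2)}_k$ has norm at most $\varepsilon \sup_{n \in J_0}\|A_{n,\ge k_0}\|$ by the group-norm definition, whereas the finite head $\sum_{k < k_0} A_{n,k} y^{(2)}_k$ is $\mathcal{J}$-null coordinate-by-coordinate (each $(A_{n,k} y^{(2)}_k)_n$ being the $A$-transform of a single-support sequence in $c_{00}^b(X,\mathcal{I})$, to which \ref{item:T5} applies). Since $\omega \setminus J_0 \in \mathcal{J}$ and $\varepsilon$ is arbitrary, the conclusion follows.

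For necessity, conditions \ref{item:T3}, \ref{item:T4}, \ref{item:T5} fall out by testing \eqref{eq:boundedunboundeddefinitiona} on arbitrary bounded $\mathcal{I}$-convergent sequences, on constant sequences, and on sequences in $c_{00}^b(X,\mathcal{I})$, respectively. For \ref{item:T1flat}, I would argue in two stages. First, for each fixed $n$ I would rule out $\|A_{n,\ge k}\| = \infty$ for all $k$ by a block construction: disjoint finite sets $F_k \subseteq [k,\infty)$ with unit vectors $(x_i^{(k)})_{i \in F_k}$ satisfying $\|\sum_{i \in F_k} A_{n,i} x_i^{(k)}\| > k\cdot 2^k$ would assemble into $\bm{x} := (2^{-k} x_i^{(k)})_{i \in F_k, k \in \omega} \in c_0(X) \subseteq c^b(X,\mathcal{I})$ whose image $A_n \bm{x}$ is not norm-Cauchy, contradicting \ref{item:T3}; this produces a pointwise threshold $f(n)$ with $\|A_{n,\ge f(n)}\| < \infty$. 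Second, assuming for contradiction that for every $k_0$ and $M$ the set $\{n : \|A_{n,\ge k_0}\| > M\}$ lies in $\mathcal{J}^+$, the decreasing chain $(\{n : \|A_{n,\ge j}\| > 2^j\})_j$ would lie in $\mathcal{J}^+$, and the strong selectivity of $\mathcal{J}$ would extract $\{x_m\}_m \in \mathcal{J}^+$ with $\|A_{x_{m+1},\,\ge x_m}\| > 2^{x_m}$; a gliding-hump diagonalization would then produce $\bm{x} \in c_0(X) \subseteq c^b(X,\mathcal{I})$ making $(\|A_{x_m}\bm{x}\|)_m$ unbounded, contradicting the $\mathcal{J}$-boundedness forced by $A\bm{x} \in c(Y,\mathcal{J})$. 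The hard part will be exactly this second stage, where synthesising pointwise tail-finiteness into a $\mathcal{J}^\star$-uniform bound realizes the De Bondt--Vernaeve ideal Banach--Steinhaus principle via strong selectivity. In the special case each $A_{n,k}$ is bounded, $\|A_{n,\omega}\| < \infty$ for every $n$ (the row is a bounded operator on $c_0(X)$), so one may take $k_0 = f(n) = 0$ throughout and the two-step argument collapses.
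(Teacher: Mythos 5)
Your sufficiency argument and your derivation of \ref{item:T3}, \ref{item:T4}, \ref{item:T5} match the paper, which simply reruns the proof of Theorem \ref{main:IJREGULAR} with $J_0,J_1\in\mathcal{J}^\star$ in place of $J_0=J_1=\omega$ (and notes that \ref{item:T5} supplies \ref{item:T2flat}). Your first stage of the necessity of \ref{item:T1flat} --- the pointwise thresholds $f(n)$ --- is also sound; it is the content of Lemma \ref{lem:convergenceoperatorbetadualc(X)}.

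The genuine gap is in your second stage. The paper obtains \ref{item:T1flat} by observing $A\in(c(X),\ell_\infty(Y,\mathcal{J}))$ and invoking Theorem \ref{thm:ctoell(X)}, whose proof splits the uniformization into two distinct steps: (i) the ideal Lorentz--Macphail theorem (Theorem \ref{thm:lorentzmacphail}), applied to the \emph{qualitative} dichotomy ``$A_k$ bounded / not bounded on $M_n$'', produces $n_0$ and $J^\star\in\mathcal{J}^\star$ with every $A_n$, $n\in J^\star$, bounded on a \emph{common} subspace $M_{n_0}$; and (ii) the De Bondt--Vernaeve ideal Banach--Steinhaus theorem (Theorem \ref{thm:uniformJboundedness}) then upgrades pointwise $\mathcal{J}$-boundedness to $\mathcal{J}\text{-}\limsup_n\|A_n\upharpoonright M_{n_0}\|<\infty$. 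Your single gliding hump tries to do both at once from the quantitative chain $S_j=\{n:\|A_{n,\ge j}\|>2^j\}$, and this fails for two concrete reasons. First, an adaptivity circularity: strong selectivity fixes the rows $(x_m)$ \emph{before} the blocks are chosen, and the guarantee $\|A_{x_{m+1},\ge x_m}\|>2^{x_m}$ refers to the tail starting at $x_m$, not at $\max F_{m-1}$; since $\|A_{x_{m+1},\ge \ell}\|\ge\|A_{x_{m+1},\ge x_m}\|-\|A_{x_{m+1},[x_m,\ell)}\|$ and the subtracted finite-segment group norm is uncontrolled (it may even exceed $2^{x_m}$, or be infinite if some column operator there is unbounded), you cannot force the hump for row $x_{m+1}$ to live beyond the previously placed blocks while retaining a useful lower bound. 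The Maddox construction avoids this precisely because unboundedness on $M_{k_n}$ is scale-invariant: arbitrarily large humps exist inside every ball of $M_{k_n}$, while the later humps lie in $M_{k_{n+1}}$, on which $A_{k_{n+1}}$ \emph{is} bounded. Second, even granting disjoint blocks, step (ii) is a genuine uniform-boundedness statement (it is exactly \cite[Theorem 3.1(b)]{MR4172859}, which \emph{characterizes} rapid$^+$ $P^+$-ideals), and your sketch never uses rapidness nor any Baire-category-type input; naming the De Bondt--Vernaeve principle is not the same as deploying it. A smaller quantitative slip: scaling block $m$ by $2^{-m}$ against the threshold $2^{x_m}$ yields only $\|A_{x_{m+1}}\bm{x}\|\gtrsim 2^{x_m-m}\ge 1$, not unboundedness (fixable by taking $\tau_j=j\,2^j$). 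Likewise, in the final remark, ``each $A_{n,k}$ bounded'' does not by itself give $\|A_{n,\omega}\|<\infty$ (take $A_{n,k}=I$ for all $k$); the correct route is Lemma \ref{lem:convergenceoperatorbetadualc(X)}, which allows $k_0=0$ \emph{given} that the rows act on $c(X)$. The repair is to route the necessity of \ref{item:T1flat} through Theorem \ref{thm:ctoell(X)} as the paper does.
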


%\textcolor{red}{It is clear that \ref{item:T1} and \ref{item:T2} are stronger than \ref{item:T1flat} and \ref{item:T2flat},} respectively, and they coincide if $\mathcal{J}=\mathrm{Fin}$. 
During the proof, we will need an ideal version of the Banach--Steinhaus theorem which has been recently proved in \cite{MR4172859}, see Theorem \ref{thm:uniformJboundedness} below. 
Interestingly, the latter result provides a characterization of rapid$^+$ $P^+$-ideals \cite[Theorem 5.1]{MR4172859}, which suggests that Theorem \ref{main:IJREGULARboundedUnbounded} cannot be improved with the current techniques. 

We remark that Theorem \ref{main:IJREGULARboundedUnbounded} sheds light on the substantial difference between the classical Silverman--Toeplitz characterization stated in Theorem \ref{thm:SilvermanToeplizsimple} 
%(hence, Theorem \ref{main:IJREGULAR} with $\mathcal{I}=\mathcal{J}=\mathrm{Fin}$) 
and its \textquotedblleft multidimensional\textquotedblright\,analogue proved by Robinson \cite{Robinson26} and Hamilton \cite{MR1545904} for double sequences, namely, the weakening of \ref{item:T1} to \ref{item:T1flat}. 
For, recall that a double sequence $(x_{m,n}: m,n \in \omega)$ has \emph{Pringsheim limit} $\eta \in X$, shortened as $\mathrm{P}\text{-}\lim_{m,n}x_{m,n}=\eta$, if for all $\varepsilon>0$ there exists $k \in \omega$ such that $\|x_{m,n}-\eta\|<\varepsilon$ for all $m,n \ge k$. 
At this point, define the ideal
\begin{equation}\label{eq:definitionIP}
\mathcal{I}_{\mathrm{P}}:=\left\{S\subseteq \omega: \sup\nolimits_{n \in S}\nu_2(n)<\infty\right\},
\end{equation}
where $\nu_2$ is the $2$-adic valution defined by $\nu_2(0):=0$ and $\nu_2(n):=\max\{k \in \omega: 2^k \text{ divides }n\}$ if $n>0$. 
Note that the ideal $\mathcal{I}_{\mathrm{P}}$ is countably generated by the sequence of sets $(Q_t: t \in \omega)$, where $Q_t:=\left\{S\subseteq \omega: \sup\nolimits_{n \in S}\nu_2(n)=t\right\}$ for all $t \in \omega$.  
Hence $\mathcal{I}_{\mathrm{P}}$ is a strongly selective ideal. 
Let also $h: \omega^2\to \omega$ be an arbitrary bijection with the property that $h[\{(m,n) \in \omega^2: \min\{m,n\}=k\}]=Q_k$ for all $k \in \omega$. 
Thus, we obtain 
\begin{equation}\label{eq:equivalencepringhsheim}
\mathrm{P}\text{-}\lim\nolimits_{m,n}x_{m,n}=\eta 
\quad \text{ if and only if }\quad 
\mathcal{I}_{\mathrm{P}}\text{-}\lim\nolimits_n x_{h^{-1}(n)}=\eta,
\end{equation}
as it has been observed in \cite[Section 4.2]{MR3955010}, cf. also \cite{MR3671266}. 
In other words, $\mathcal{I}_{\mathrm{P}}$ is an isomorphic copy on $\omega$ of the ideal on $\omega^2$ generated by vertical lines and horizontal lines, cf. Remark \ref{rmk:farahisomorphiccountablygeneratedideals} below. 
Relying on the equivalence \eqref{eq:equivalencepringhsheim}, the classical definition of \emph{RH-regular matrix} $A$ coincides with \eqref{eq:boundedunboundeddefinitiona} in the case $X=Y=\mathbf{R}$, $T=I$, and $\mathcal{I}=\mathcal{J}=\mathcal{I}_{\mathrm{P}}$. 
With the same notations of Corollary \ref{cor:finitedimensionmain}, we can state the following consequence in the finite dimensional case. % (here, we suppose $\mathcal{J}\neq \mathrm{Fin}$ because, in the opposite, we may apply directly Corollary \ref{cor:finitedimensionmain}).
\begin{cor}\label{cor:RHgeneral}
Suppose that $X=\mathbf{R}^d$, $Y=\mathbf{R}^m$, and let $\mathcal{I}$, $\mathcal{J}$ be ideals on $\omega$ such that 
$\mathcal{J}$ 
%$\mathcal{J}\neq \mathrm{Fin}$ 
is countably generated by a sequence of sets $(Q_t: t \in \omega)$. 

Then a matrix $A$ satisfies \eqref{eq:boundedunboundeddefinitiona} if and only if there exists $t_0 \in \omega$ such that \textup{:}
\begin{enumerate}[label={\rm (\textsc{R}\arabic{*})}]
\item \label{item:R1} $\sup_{n \in \omega\setminus Q_{t_0}} \sum_k\sum_{i,j}\left|a_{n,k}(i,j)\right|<\infty$\textup{;}
\item \label{item:R2} $\sum_k\left|a_{n,k}(i,j)\right|<\infty$ for all $n \in Q_{t_0}$, $1\le i\le m$, and $1\le j\le d$\textup{;}
%$\sum_k\sum_{i,j}\left|a_{n,k}(i,j)\right|<\infty$ for all $n \in Q_{t_0}$\textup{;}
\item \label{item:R4} $\mathcal{J}\text{-}\lim_n \sum_k a_{n,k}(i,j)=t(i,j)$ for all $1\le i \le m$ and $1\le j\le d$\textup{;}
\item \label{item:R6} $\mathcal{J}\text{-}\lim_n \sum_{k\in E}\sum_{i,j}\left|a_{n,k}(i,j)\right|=0$ for all $E \in \mathcal{I}$\textup{.}
\end{enumerate}
\end{cor}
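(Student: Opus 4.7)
The plan is to apply Theorem \ref{main:IJREGULARboundedUnbounded} and to translate its abstract conditions \ref{item:T1flat}, \ref{item:T3}, \ref{item:T4}, \ref{item:T5} into the explicit statements \ref{item:R1}--\ref{item:R6}. Since $\mathcal{J}$ is countably generated it is strongly selective, so that theorem applies; and since $X=\mathbf{R}^d$ is finite dimensional, each $A_{n,k}$ is bounded, so we may take $k_0=0$ and $f(n)=0$ for every $n\in\omega\setminus J_0$. Without loss of generality I assume the generating sequence $(Q_t)$ is increasing (by replacing $Q_t$ with $\bigcup_{s\le t}Q_s$, which still generates $\mathcal{J}$), so that every $S\in\mathcal{J}$ lies in some $Q_t$ and $(\omega\setminus Q_t)$ is cofinal in $\mathcal{J}^\star$.

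Most of the translation is routine. For \ref{item:T1flat} $\Leftrightarrow$ \ref{item:R1}$+$\ref{item:R2}: since $\omega\setminus J_0\in\mathcal{J}$ is contained in some $Q_{t_0}$, I may shrink $J_0$ to $\omega\setminus Q_{t_0}$ and use that, in finite dimension, the group norm $\|A_{n,\omega}\|$ is comparable to $\sum_k\sum_{i,j}|a_{n,k}(i,j)|$. Condition \ref{item:T3} is then automatic because once \ref{item:R1} and \ref{item:R2} hold every row is absolutely summable, hence $\sum_k A_{n,k}x_k$ converges in norm for every bounded $\bm{x}$. Condition \ref{item:T4} translates to \ref{item:R4} by evaluating the operators on the canonical basis of $\mathbf{R}^d$ and reading off coordinates in $\mathbf{R}^m$.

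The key step is the equivalence \ref{item:T5} $\Leftrightarrow$ \ref{item:R6}. The direction \ref{item:R6} $\Rightarrow$ \ref{item:T5} is immediate from $\|A_n\bm{x}\|\le\|\bm{x}\|\cdot\|A_{n,E}\|$ whenever $\mathrm{supp}\,\bm{x}\subseteq E\in\mathcal{I}$. For the converse I will introduce the auxiliary matrix $A'$ defined by $A'_{n,k}:=A_{n,k}$ if $n\notin Q_{t_0}$ and $A'_{n,k}:=0$ otherwise. By \ref{item:R1} the matrix $A'$ has uniformly bounded row group norms, so $A'\bm{x}\in\ell_\infty(Y)$ for every $\bm{x}\in c^b(X,\mathcal{I})$; and since $A'$ coincides with $A$ outside the $\mathcal{J}$-set $Q_{t_0}$, the property \eqref{eq:boundedunboundeddefinitiona} assumed for $A$ promotes $A'$ to an $(\mathcal{I},\mathcal{J})$-regular matrix with respect to $T$. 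Corollary \ref{cor:finitedimensionmain}, applicable because $\mathcal{J}$ is countably generated, then yields \ref{item:F6} for $A'$, which is exactly \ref{item:R6} for $A$ modulo the $\mathcal{J}$-null set $Q_{t_0}$. The main obstacle is precisely this last equivalence, where \ref{item:T5} offers only pointwise $\mathcal{J}$-convergence while \ref{item:R6} is a uniform group-norm statement; the truncation to $A'$ is what transports the problem into the setting where Corollary \ref{cor:finitedimensionmain} applies and finite-dimensionality can be exploited.
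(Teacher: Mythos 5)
Your proof is correct, and its overall skeleton (reduce to Theorem \ref{main:IJREGULARboundedUnbounded}, then translate the abstract conditions into \ref{item:R1}--\ref{item:R6} using the finite-dimensional comparability of the group norm with $\sum_k\sum_{i,j}|a_{n,k}(i,j)|$) matches the paper's. The genuine difference is in the only nontrivial step, \ref{item:T5} $\Rightarrow$ \ref{item:R6}. The paper applies the sliding-hump machinery directly to $A$: it notes that \ref{item:T1flat} already implies \ref{item:T3natural} (via Lemma \ref{lem:convergenceoperator} and Corollary \ref{cor:finitedimensionalVSextremepoints}) and that \ref{item:T6flat} follows from Lemma \ref{lem:finitedimensionalnormkjfdhgd}, so Corollary \ref{cor:nontrivialJFin} --- whose underlying Theorem \ref{thm:key} is deliberately stated under the weak hypothesis \ref{item:T1flat} rather than \ref{item:T1} --- applies to $A$ itself and converts the pointwise decay of \ref{item:T5} into the group-norm decay of \ref{item:T6}. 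You instead truncate to $A'$ (zeroing the rows indexed by $Q_{t_0}$), check that $A'$ is honestly $(\mathcal{I},\mathcal{J})$-regular with respect to $T$ in the bounded-codomain sense, and invoke Corollary \ref{cor:finitedimensionmain} as a black box; since modifying a sequence on the $\mathcal{J}$-set $Q_{t_0}$ affects neither membership in $c^b(Y,\mathcal{J})$ nor $\mathcal{J}$-limits, and \ref{item:R1} gives the uniform row bound needed for $A'\bm{x}\in\ell_\infty(Y)$, this is sound, and \ref{item:F6} for $A'$ does transfer back to \ref{item:R6} for $A$. Ultimately both routes rest on the same sliding-hump argument (Corollary \ref{cor:finitedimensionmain} in the countably generated case is proved via Theorem \ref{thm:JfinIJregularmain}, hence via Corollary \ref{cor:nontrivialJFin}); yours buys reusability of an already-proved corollary at the cost of an auxiliary matrix, while the paper's exploits the fact that Theorem \ref{thm:key} was formulated under \ref{item:T1flat} precisely so that no truncation is needed. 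One small caveat: your ``without loss of generality $(Q_t)$ is increasing'' silently changes the family named in \ref{item:R1}--\ref{item:R2}; since the statement quantifies over a single $Q_{t_0}$ from the \emph{given} generating sequence, you should either note that the corollary is intended for an increasing (or finite-union-closed) generating family --- as the paper implicitly assumes, e.g.\ in the proof of Theorem \ref{thm:key} --- or verify that the existential statement is invariant under passing to $\bigcup_{s\le t}Q_s$, which is where the only friction lies.
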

It is clear that, if also $\mathcal{I}$ is countably generated by a sequence $(E_t: t \in \omega)$, which is the case 
%of the classical characterization 
of RH-regular matrices, 
then \ref{item:R6} can be rewritten as: 
\begin{enumerate}[label={\rm (\textsc{R}\arabic{*}$^\prime$)}]
\setcounter{enumi}{3}
\item $\mathcal{J}\text{-}\lim_n \sum_{k\in E_t}\sum_{i,j}\left|a_{n,k}(i,j)\right|=0$ for all $t \in \omega$\textup{.}
\end{enumerate}

Another special instance of Corollary \ref{cor:RHgeneral} has been proved in \cite[Theorem 5]{MR3466559} for the case where $X=Y=\mathbf{R}$, $T=I$, $A$ is a RH-regular matrix with nonnegative real entries, $\mathcal{I}$ is a $P$-ideal (that is, if $(S_n)$ is an increasing sequence in $\mathcal{I}$, there exists $S \in \mathcal{I}$ such that $S_n\setminus S \in \mathrm{Fin}$ for all $n$), and $\mathcal{J}=\mathrm{Fin}$. 

Other consequences of Theorem \ref{main:IJREGULARboundedUnbounded}, in the same vein of the ones given in Section \ref{subsec:boundedbounded}, may be obtained here, and they are left to the reader. % (as in the remaining sections). 

\begin{rmk}\label{rmk:farahisomorphiccountablygeneratedideals} 
An ideal $\mathcal{I}$ on $\omega$ is countably generated if and only if it is isomorphic to one of the following: 
\begin{enumerate}[label={\rm (\roman*)}]
\item \label{item:1countablygenerated} $\mathrm{Fin}$; 
\item \label{item:2countablygenerated} $\mathrm{Fin}\times \emptyset:=\{S\subseteq \omega^2: \exists n \in \omega, S\subseteq [0,n]\times \omega\}$;
\item \label{item:3countablygenerated} $\mathrm{Fin}\oplus \mathcal{P}(\omega):=\{S\subseteq \{0,1\}\times \omega: |S\cap (\{0\}\times \omega)|<\infty\}$. 
\end{enumerate} 
(Recall that two ideals $\mathcal{I}_1$ and $\mathcal{I}_2$ on countable sets $H_1$ and $H_2$, respectively, are called \emph{isomorphic}, written as $\mathcal{I}_1 \simeq \mathcal{I}_2$ if there exists a bijection $h: H_1\to H_2$ such that $h[S] \in \mathcal{I}_2$ if and only if $S \in \mathcal{I}_1$ for all $S\subseteq H_1$; accordingly, it is easy to see that $\mathcal{I}_{\mathrm{P}} \simeq \mathrm{Fin}\times \emptyset$, and that the ideals in \ref{item:1countablygenerated}-\ref{item:3countablygenerated} are pairwise nonisomorphic.)   
This has been essentially proved in \cite[Proposition 1.2.8]{MR1711328}, however the correct statement appears 
%without proof 
in \cite[Section 2]{MR3543775}. 
%which has been remarked also in \cite[Section 3]{MR3920747},

We include its simple proof for the sake of completeness. Suppose that $\mathcal{I}$ is countably generated by a partition $\{Q_j: j \in\omega\}$ of $\omega$, and define $J:=\{j \in \omega: Q_j \in \mathrm{Fin}\}$. 
If $J$ is empty then every $Q_j$ is infinite, hence $\mathcal{I}\simeq \mathrm{Fin}\times \emptyset$. If $J$ is nonempty finite then $Q_j$ is infinite for infinitely many $j\in \omega$, hence $\mathcal{I}\simeq \mathcal{J}$, where
$$
\mathcal{J}:=\mathrm{Fin}\oplus (\mathrm{Fin}\times \emptyset):=\{S\subseteq \omega \cup \omega^2: S\cap \omega \in \mathrm{Fin}, S\cap \omega^2 \in \mathrm{Fin}\times \emptyset\}. 
$$
However, $\mathcal{J}\simeq \mathrm{Fin}\times \emptyset$, with the witnessing bijection $h: \omega\cup \omega^2\to \omega^2$ defined by $h(a,b)=(a,b+1)$ and $h(a)=(a,0)$ for all $a,b \in \omega$. Hence, let us assume hereafter that $J$ is infinite. If $J^c$ is empty then $\mathcal{I}\simeq \mathrm{Fin}$. If $J^c$ is nonempty finite then $\mathcal{I}\simeq \mathrm{Fin}\oplus \mathcal{P}(\omega)$. Lastly, if also $J^c$ is infinite, then $\mathcal{I}\simeq \mathcal{J}\simeq \mathrm{Fin}\times \emptyset$. 
\end{rmk}

%%%%%%%%%%%%%%%%%%%%%%%%%%%%%
%%%%%%%%%%%%%%%%%%%%%%%%%%%%%%

\subsection{Unbounded to Bounded case}

In this section, we may assume that $\mathcal{I}\neq \mathrm{Fin}$, otherwise we go back in the previous cases. 
Differently from the other results, we are going to show that, quite often, there are no matrices $A$ 
%$A=(A_{n,k})$ 
%of linear operators in $\mathcal{L}(X,Y)$ such that 
which satisfy
\begin{equation}\label{eq:boundedunboundeddefinition}
A \in (c(X,\mathcal{I}),c^b(Y,\mathcal{J})) 
\,\,\, \text{ and }\,\,\,
\mathcal{J}\text{-}\lim A\bm{x}=T(\,\mathcal{I}\text{-}\lim \bm{x})
\,\text{ for all }\bm{x} \in c(X,\mathcal{I}),
\end{equation}
unless $T$ is the zero operator. To this aim, recall that an ideal $\mathcal{I}$ is said to be \emph{tall} if, for every infinite set $S\subseteq \omega$, there exists an infinite subset $S^\prime \subseteq S$ which belongs to $\mathcal{I}$ (note that countably generated ideals, hence also $\mathrm{Fin}$, are not tall; for a necessary condition in the case of countably generated ideals $\mathcal{J}$ and arbitrary $\mathcal{I}$, cf. Remark \ref{rmk:necessryT5sharp} below). 
%Note also that an $\mathcal{I}$ is \emph{maximal} (with respect to inclusion, i.e., if $\mathcal{I}\subseteq \mathcal{I}^\prime$ then $\mathcal{I}=\mathcal{I}^\prime$) if and  only if $\mathcal{I}^\star$ is a free ultrafilter on $\omega$. 

\begin{thm}\label{main:IJREGULARUnboundedBounded}
Fix a nonzero linear operator $T \in \mathcal{L}(X,Y)$, where $X,Y$ are Banach spaces. 
Let also $\mathcal{I},\mathcal{J}$ be an ideals on $\omega$ such that $\mathcal{I}$ is tall. 

Then there are no matrices $A=(A_{n,k})$ of linear operators in $\mathcal{L}(X,Y)$ which satisfy \eqref{eq:boundedunboundeddefinition}. 
\end{thm}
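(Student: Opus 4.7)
My plan is to argue by contradiction, supposing that a matrix $A$ satisfies \eqref{eq:boundedunboundeddefinition}. I would first pick $x_0 \in X$ with $Tx_0 \neq 0$ and apply \eqref{eq:boundedunboundeddefinition} to the constant sequence $\bm{1}_{x_0} = (x_0, x_0, \ldots) \in c^b(X, \mathcal{I})$ and to the unit sequences having $x \in X$ at position $k$ and $0$ elsewhere, obtaining $\mathcal{J}\text{-}\lim_n A_n \bm{1}_{x_0} = Tx_0$, $\sup_n \|A_n \bm{1}_{x_0}\| < \infty$, and $\mathcal{J}\text{-}\lim_n A_{n,k} x = 0$ for every $k \in \omega$ and $x \in X$.

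My first step is to show that, for each $n$, the row support $R_n := \{k \in \omega : A_{n, k} \neq 0\}$ is necessarily finite. Indeed, if $R_n$ were infinite, the tallness of $\mathcal{I}$ would yield an infinite $S = \{s_0 < s_1 < \cdots\} \subseteq R_n$ with $S \in \mathcal{I}$; choosing $x_j \in X$ such that $A_{n, s_j} x_j \neq 0$ and setting $w_{s_j} := x_j / \|A_{n, s_j} x_j\|$ and $w_k := 0$ off $S$ would produce $\bm{w} \in c_0(X, \mathcal{I}) \subseteq c(X, \mathcal{I})$ whose image series $\sum_k A_{n,k} w_k$ has unit-norm terms along $S$, hence fails to converge in $Y$, contradicting $c(X, \mathcal{I}) \subseteq \mathrm{dom}(A)$.

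Next, I would construct inductively on $\ell \in \omega$ a strictly increasing sequence $(n_\ell)$, pairwise distinct columns $(k_\ell)$, and positive scalars $(\lambda_\ell)$ such that, writing $B_\ell := \bigcup_{i < \ell} R_{n_i}$: (a) $n_\ell$ lies in the set $\{n \in \omega : \|A_n \bm{1}_{x_0}\| > \|Tx_0\|/2\} \cap \bigcap_{k \in B_\ell} \{n \in \omega : \|A_{n, k} x_0\| < \varepsilon_\ell\}$, a finite intersection of $\mathcal{J}^\star$-sets which is therefore infinite and allows $n_\ell > n_{\ell-1}$; (b) with $\varepsilon_\ell := \|Tx_0\|/(4(|B_\ell| + 1))$, a triangle inequality forces $\|\sum_{k \in R_{n_\ell} \setminus B_\ell} A_{n_\ell, k} x_0\| > \|Tx_0\|/4$, so that $R_{n_\ell} \setminus B_\ell$ contains some $k_\ell$ with $A_{n_\ell, k_\ell} x_0 \neq 0$; (c) $\lambda_\ell$ is then chosen large enough that $\lambda_\ell \|A_{n_\ell, k_\ell} x_0\| \geq \ell + \varepsilon_\ell \sum_{\ell' < \ell} \lambda_{\ell'}$. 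By construction, $k_\ell \notin R_{n_{\ell'}}$ for $\ell' < \ell$ and $\|A_{n_\ell, k_{\ell'}} x_0\| < \varepsilon_\ell$ for $\ell' < \ell$.

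To conclude, I would invoke the tallness of $\mathcal{I}$ a second time to extract an infinite subset $K = \{k_{\ell_m} : m \in \omega\} \in \mathcal{I}$ of the infinite set $\{k_\ell : \ell \in \omega\}$, and define $\bm{z} \in X^\omega$ by $z_{k_{\ell_m}} := \lambda_{\ell_m} x_0$ and $z_k := 0$ elsewhere. Since $\mathrm{supp}(\bm{z}) \subseteq K \in \mathcal{I}$, one has $\bm{z} \in c_0(X, \mathcal{I}) \subseteq c(X, \mathcal{I})$; each $A_n \bm{z}$ is a finite sum because $R_n$ is finite; and, for $n = n_{\ell_m}$, the contributions from indices $m' > m$ vanish (since $k_{\ell_{m'}} \notin R_{n_{\ell_m}}$), while those from $m' < m$ are bounded by $\varepsilon_{\ell_m} \sum_{m' < m} \lambda_{\ell_{m'}}$, so (c) gives $\|A_{n_{\ell_m}} \bm{z}\| \geq \ell_m \to \infty$. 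This contradicts $A\bm{z} \in c^b(Y, \mathcal{J}) \subseteq \ell_\infty(Y)$. The main technical obstacle is the inductive step, where the $\mathcal{J}$-vanishing of the columns $A_{n, k} x_0$ must be balanced quantitatively against the lower bound on the row sums from $Tx_0 \neq 0$, in order to extract at each stage a fresh column $k_\ell \in R_{n_\ell} \setminus B_\ell$ on which $A_{n_\ell, k_\ell} x_0$ remains nonzero.
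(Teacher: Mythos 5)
Your argument is correct, but it takes a genuinely different route from the paper's. The paper first applies Theorem \ref{thm:c00ellinfty} (via $A \in (c_{00}(X,\mathcal{I}), \ell_\infty(Y))$ and the tallness of $\mathcal{I}$) to obtain the strong structural fact that there is a single $k_1$ with $A_{n,k}=0$ for \emph{all} $n$ and all $k\ge k_1$; the contradiction is then immediate, because condition \ref{item:T5} from Theorem \ref{main:IJREGULAR} forces $\mathcal{J}\text{-}\lim_n\sum_{k<k_1}A_{n,k}x=0$ for the truncation of the constant sequence $(x,x,\ldots)$, while \eqref{eq:boundedunboundeddefinition} forces the same limit to be $Tx\neq 0$. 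You instead stop at row-finiteness (your step with the unit-norm terms along an infinite $S\in\mathcal{I}$ is exactly the combination of Lemma \ref{lemma:unboundedcase} with tallness used in Lemma \ref{lem:dualc00I}) and then run a gliding-hump construction directly: the $\mathcal{J}$-convergence $A_n\bm{1}_{x_0}\to Tx_0\neq 0$ on a $\mathcal{J}^\star$-set, balanced against the $\mathcal{J}$-vanishing of the finitely many already-used columns, guarantees a fresh column $k_\ell$ with $A_{n_\ell,k_\ell}x_0\neq 0$ at every stage, and a second appeal to tallness plus the scaling in (c) produces $\bm{z}\in c_{00}(X,\mathcal{I})$ with $A\bm{z}\notin\ell_\infty(Y)$. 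In effect you re-prove in situ the relevant special case of Theorem \ref{thm:Xomegaellinfty} (with $\mathcal{J}=\mathrm{Fin}$) and bypass Theorems \ref{thm:c00ellinfty} and \ref{main:IJREGULAR} entirely; the paper's route is shorter given its accumulated machinery, while yours is self-contained and makes transparent exactly where the tallness of $\mathcal{I}$ and the hypothesis $T\neq 0$ enter. All the quantitative steps check out: the choice $\varepsilon_\ell=\|Tx_0\|/(4(|B_\ell|+1))$ does isolate a nonzero contribution outside $B_\ell$, the sets in (a) all lie in $\mathcal{J}^\star$ (hence are infinite), and the indices $k_\ell$ are pairwise distinct since $k_{\ell'}\in B_\ell$ for $\ell'<\ell$ while $k_\ell\notin B_\ell$.
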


Of course, if $T=0$, then the zero matrix $A$ (namely, the matrix with $A_{n,k}=0$ for all $n,k$) satisfies \eqref{eq:boundedunboundeddefinition}. 
However, this is essentially the unique possibility: 
\begin{thm}\label{thm:cIc_0bJ}
Let $X,Y$ be Banach spaces. Let also $\mathcal{I}, \mathcal{J}$ be ideals on $\omega$ such that $\mathcal{I}$ is tall. 

Then a matrix $A=(A_{n,k})$ of linear operators in $\mathcal{L}(X,Y)$ belongs to $(c(X,\mathcal{I}), c_0^b(Y,\mathcal{J}))$ if and only if there exists $k_1 \in \omega$ such that\textup{:}
\begin{enumerate}[label={\rm (\textsc{B}\arabic{*})}]
\item \label{item:B1} $A_{n,k}=0$ for all $n \in \omega$ and $k\ge k_1$\textup{;}
\item \label{item:B2} $\sup_n \|A_{n,k}x\|<\infty$ for all $x \in X$ and $k<k_1$\textup{;}
\item \label{item:B3} $\mathcal{J}\text{-}\lim_n A_{n,k}=0$ for all $k<k_1$\textup{.}
\end{enumerate}
\end{thm}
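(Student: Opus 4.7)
For the \textquotedblleft if\textquotedblright\ direction, assume \ref{item:B1}--\ref{item:B3} hold and fix $\bm{x}\in c(X,\mathcal{I})$. Then each $A_n\bm{x}=\sum_{k<k_1}A_{n,k}x_k$ is a finite sum of well-defined vectors in $Y$, and by linearity $\|A_n\bm{x}\|\le \sum_{k<k_1}\|A_{n,k}x_k\|$, which is bounded in $n$ by \ref{item:B2}, while \ref{item:B3} and linearity yield $\mathcal{J}\text{-}\lim_n A_n\bm{x}=0$; hence $A\bm{x}\in c_0^b(Y,\mathcal{J})$. The rest of the plan addresses the \textquotedblleft only if\textquotedblright\ direction.

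For the converse, assume $A\in(c(X,\mathcal{I}),c_0^b(Y,\mathcal{J}))$. First I would test $A$ on the eventually zero sequences having $x\in X$ in position $k$ and zeros elsewhere, which lie in $c_{00}(X)\subseteq c(X,\mathcal{I})$. Since the image $(A_{n,k}x)_n$ must belong to $c_0^b(Y,\mathcal{J})$, this at once yields the pointwise facts $\sup_n\|A_{n,k}x\|<\infty$ and $\mathcal{J}\text{-}\lim_n A_{n,k}x=0$ for \emph{every} $k\in\omega$ and $x\in X$. Next I would show that each row $A_{n,\cdot}$ has finite support: if $K_n:=\{k:A_{n,k}\neq 0\}$ were infinite, tallness of $\mathcal{I}$ would supply an infinite $K\subseteq K_n$ with $K\in\mathcal{I}$; then, picking $y_k\in X$ with $z_k:=A_{n,k}y_k\neq 0$ and defining $x_k:=y_k/\|z_k\|$ for $k\in K$ and $x_k:=0$ otherwise would produce a sequence in $c_0(X,\mathcal{I})\subseteq c(X,\mathcal{I})$ whose $n$-th transform has summands of norm one and so diverges, contradicting $c(X,\mathcal{I})\subseteq\mathrm{dom}(A)$.

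It remains to prove the uniform bound \ref{item:B1} by a diagonal argument, which is the technical core of the proof. If no such $k_1$ existed, I would inductively extract indices $n_j$ and a strictly increasing sequence $k_1<k_2<\cdots$ with $A_{n_j,k_j}\neq 0$, arranging at each stage that $\mathrm{supp}(A_{n_j,\cdot})\subseteq[0,k_j]$ by choosing $k_j$ to be the \emph{maximum} of the support of some row that reaches beyond $k_{j-1}$ (such a row exists because $\bigcup_n\mathrm{supp}(A_{n,\cdot})$ is infinite while every individual row has finite support). Tallness then yields an infinite further subsequence which, after relabeling, gives $K:=\{k_j\}\in\mathcal{I}$ with the same containment $\mathrm{supp}(A_{n_j,\cdot})\cap K\subseteq\{k_1,\dots,k_j\}$. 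Picking $y_j$ with $\|y_j\|=1$ and $z_j:=A_{n_j,k_j}y_j\neq 0$, and defining $x_{k_j}:=M_j y_j$ and $x_k:=0$ for $k\notin K$, one obtains
\[
A_{n_j}\bm{x}\;=\;M_j z_j\;+\;\sum_{j'<j,\,k_{j'}\in\mathrm{supp}(A_{n_j,\cdot})}M_{j'}A_{n_j,k_{j'}}y_{j'},
\]
where the second sum depends only on $M_1,\dots,M_{j-1}$. Choosing $M_j$ recursively large enough forces $\|A_{n_j}\bm{x}\|\ge j$, while $\bm{x}$ is supported in $K\in\mathcal{I}$ and every row of $A$ has finite support, so $\bm{x}\in c_0(X,\mathcal{I})\subseteq c(X,\mathcal{I})$ and $A\bm{x}\in Y^\omega$ is well-defined; the conclusion $\sup_n\|A_n\bm{x}\|=\infty$ then contradicts $A\bm{x}\in\ell_\infty(Y)$. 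The main obstacle is engineering the combinatorial containment of supports inside $[0,k_j]$, since this is precisely what guarantees that later choices of $M_{j'}$ cannot undo the lower bound established at row $n_j$.
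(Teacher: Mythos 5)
Your proof is correct, but it takes a more self-contained route than the paper, which disposes of this theorem in one line as an ``immediate consequence'' of Theorem \ref{thm:c00ellinfty}: there the crucial condition \ref{item:B1} is obtained by first partitioning $\omega$ into infinitely many infinite sets belonging to $\mathcal{I}$ (Lemma \ref{lem:Itall}), deducing that only finitely many columns inside each block are nonzero (via Lemma \ref{lem:dualc00I} and the gliding-hump Theorem \ref{thm:Xomegaellinfty}), and then applying tallness a second time to the union of these finite exceptional sets. You instead run a single direct diagonalization: you establish row-finiteness by the clean observation that infinitely many norm-one terms obstruct norm convergence (slightly simpler than the paper's Lemma \ref{lemma:unboundedcase}, which makes partial sums blow up), then build one strictly increasing diagonal $k_1<k_2<\cdots$ with $\mathrm{supp}(A_{n_j,\cdot})\subseteq[0,k_j]$, invoke tallness once to shrink $\{k_j\}$ into $\mathcal{I}$, and scale recursively to violate boundedness. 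The support containment $\mathrm{supp}(A_{n_j,\cdot})\cap K\subseteq\{k_1,\dots,k_j\}$ is exactly the right triangularity to make the recursive choice of the $M_j$ legitimate, and your construction is genuinely in $c(X,\mathcal{I})$ with $A\bm{x}$ well defined by row-finiteness, so the contradiction with $A\bm{x}\in\ell_\infty(Y)$ is sound. What the paper's modular route buys is reusability (Theorem \ref{thm:c00ellinfty} is also invoked in the proof of Theorem \ref{main:IJREGULARUnboundedBounded}); what yours buys is a shorter, fully explicit argument that also spells out the easy verification of \ref{item:B2}, \ref{item:B3} and of the sufficiency, which the paper leaves implicit.
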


As it will turn out, condition \ref{item:B1} is satisfied also for all matrices in the larger class $(c_{00}(X,\mathcal{I}), \ell_\infty(Y))$, provided that $\mathcal{I}$ is tall. 

%\textcolor{red}{Trivial}
%
%\textcolor{red}{Final comment}

%%%%%%%%%%%%%%%%%%%%%%%%%%%%%
%%%%%%%%%%%%%%%%%%%%%%%%%%%%%%

\subsection{Unbounded to Unbounded case} 
In this last section, we study the analogue condition for a matrix $A=(A_{n,k})$ of linear operators in $\mathcal{L}(X,Y)$ to satisfy 
\begin{equation}\label{eq:unoundedunboundeddefinition}
A \in (c(X,\mathcal{I}),c(Y,\mathcal{J})) 
\,\,\, \text{ and }\,\,\,
\mathcal{J}\text{-}\lim A\bm{x}=T(\,\mathcal{I}\text{-}\lim \bm{x})
\,\text{ for all }\bm{x} \in c(X,\mathcal{I}).
\end{equation}
%\textcolor{red}{Here, taking into account the previous results, we may assume that $\mathcal{I},\mathcal{J}\neq \mathrm{Fin}$ and $T\neq 0$.} 
\begin{rmk}\label{rmk:existence}
In some cases, it is easy to provide examples of matrices which satisfy \eqref{eq:unoundedunboundeddefinition}. 
Indeed, suppose that $T\in \mathcal{B}(X,Y)$ and $\mathcal{I}\subseteq \mathcal{J}$. We claim that the matrix $A=(A_{n,k})$ such that $A_{n,k}=T$ if $n=k$ and $A_{n,k}=0$ otherwise has this property. 
For, set $\bm{x} \in c(X,\mathcal{I})$ with $\mathcal{I}$-limit $\eta$. 
Then 
$
\mathcal{I}\text{-}\lim\nolimits_n A_n\bm{x}=\mathcal{I}\text{-}\lim\nolimits_n Tx_n=T(\mathcal{I}\text{-}\lim \bm{x})=T\eta,
$ 
which implies $\mathcal{J}\text{-}\lim A\bm{x}=T\eta$. Here, we used the fact the $T$ preserves $\mathcal{I}$-convergence: this is clear if $T=0$, otherwise $\{n \in\omega: \|Tx_n-T\eta\|<\varepsilon\}\supseteq \{n \in \omega: \|x_n-\eta\|<\varepsilon/\|T\|\} \in \mathcal{I}^\star$ for all $\varepsilon>0$.
\end{rmk}

In the next results, we will need a further weakening of \ref{item:T1flat} and stronger versions of conditions \ref{item:T3} and \ref{item:T5}, that is, 
\begin{enumerate}[label={\rm (\textsc{T}\arabic{*}$^{\flat\flat}$)}]
\setcounter{enumi}{0}
\item \label{item:T1flatflat} For all $n \in \omega$, there exists $f(n) \in \omega$ such that $\|A_{n,\ge f(n)}\|<\infty$\textup{;}
\end{enumerate} 
%and stronger versions of conditions \ref{item:T3} and \ref{item:T5}, namely, 
\begin{enumerate}[label={\rm (\textsc{T}\arabic{*}$^{\sharp}$)}]
\setcounter{enumi}{2}
\item \label{item:T3sharp} $\sum\nolimits_kA_{n,k}x_k$ converges in the norm of $Y$ for all $\bm{x} \in c(X,\mathcal{I})$ and $n \in \omega$\textup{;}
\end{enumerate}
\begin{enumerate}[label={\rm (\textsc{T}\arabic{*}$^{\sharp}$)}]
\setcounter{enumi}{4}
\item \label{item:T5sharp} $A \in (c_{00}(X,\mathcal{I}),c_0(Y,\mathcal{J}))$\textup{.}
\end{enumerate}

\begin{thm}\label{main:IJREGULARUnboundedUnbounded}
Fix a linear operator $T \in \mathcal{L}(X,Y)$, where $X,Y$ are Banach spaces. 
Let also $\mathcal{I},\mathcal{J}$ be an ideals on $\omega$. 
% such that $\mathcal{I}\neq \mathrm{Fin}$ and $\mathcal{J}$ is countably generated. 
%
Then a matrix $A=(A_{n,k})$ of linear operators in $\mathcal{L}(X,Y)$ satisfies \eqref{eq:unoundedunboundeddefinition} 
if 
there exists $k_0 \in \omega$ such that \ref{item:T1flat}, \ref{item:T3sharp}, \ref{item:T4}, and \ref{item:T5sharp} hold. 
%there exists $k_0 \in \omega$ such that \ref{item:T1flat}, \ref{item:T2flat}, and \ref{item:T4} hold, together with\textup{:} 
%\begin{enumerate}[label={\rm (\textsc{T}\arabic{*}$^{\sharp}$)}]
%\setcounter{enumi}{2}
%\item \label{item:T3sharp} $\sum\nolimits_kA_{n,k}x_k$ converges in the norm of $Y$ for all $\bm{x} \in c(X,\mathcal{I})$ and $n \in \omega$\textup{;}
%\end{enumerate}
%\begin{enumerate}[label={\rm (\textsc{T}\arabic{*}$^{\sharp}$)}]
%\setcounter{enumi}{4}
%\item \label{item:T5sharp} $A \in (c_{00}(X,\mathcal{I}),c_0(Y,\mathcal{J}))$\textup{.}
%\end{enumerate}

Conversely, if $A$ satisfies \eqref{eq:unoundedunboundeddefinition} then 
\ref{item:T1flatflat}, \ref{item:T3sharp}, \ref{item:T4}, and \ref{item:T5sharp} hold. 
%\textcolor{red}{$k_0=0$?}
%\ref{item:T3sharp}, \ref{item:T4}, and \ref{item:T5sharp} hold, together with\textup{:}
%\begin{enumerate}[label={\rm (\textsc{T}\arabic{*}$^{\flat\flat}$)}]
%\setcounter{enumi}{0}
%\item \label{item:T1flatflat} For all $n \in \omega$, there exists $f(n) \in \omega$ such that $\|A_{n,\ge f(n)}\|<\infty$\textup{;}
%%\item \label{item:T2flatflat} \textcolor{red}{$\mathcal{J}\text{-}\limsup_n\|A_{n,k}x\|<\infty$ for all $x \in X$ and all $k\in \omega$\textup{.}}
%\end{enumerate}
\end{thm}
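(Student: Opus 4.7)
My plan is to prove the two implications separately, noting that the conditions assumed in the two directions differ only in \ref{item:T1flat} versus the weaker \ref{item:T1flatflat}. For the sufficient direction, I would fix $\bm{x} \in c(X,\mathcal{I})$ with $\mathcal{I}$-limit $\eta$ and $\varepsilon>0$, set $E:=\{k \in \omega : \|x_k-\eta\|\ge \varepsilon\} \in \mathcal{I}$, and decompose $\bm{x}=\bm{y}_\eta+\bm{z}+\bm{w}$, where $\bm{y}_\eta:=(\eta,\eta,\ldots)$, the sequence $\bm{z}$ agrees with $\bm{x}-\bm{y}_\eta$ on $E$ and vanishes on $E^c$, and $\bm{w}$ agrees with $\bm{x}-\bm{y}_\eta$ on $E^c$ and vanishes on $E$. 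Then $\bm{z} \in c_{00}(X,\mathcal{I})$ and $\bm{w} \in c_0^b(X,\mathcal{I})$ with $\|\bm{w}\|\le \varepsilon$, and all three summands lie in $c(X,\mathcal{I})$, so \ref{item:T3sharp} ensures each of $A_n\bm{y}_\eta$, $A_n\bm{z}$, and $A_n\bm{w}$ is a well-defined element of $Y$. Conditions \ref{item:T4} and \ref{item:T5sharp} immediately yield $\mathcal{J}\text{-}\lim_n A_n\bm{y}_\eta=T\eta$ and $\mathcal{J}\text{-}\lim_n A_n\bm{z}=0$, respectively.

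To handle $A_n\bm{w}$, I would invoke \ref{item:T1flat}: with $J_0 \in \mathcal{J}^\star$ as given there and $M:=\sup_{n \in J_0}\|A_{n,\ge k_0}\|<\infty$, the definition of the group norm yields $\|\sum_{k\ge k_0}A_{n,k}w_k\|\le M\varepsilon$ for every $n \in J_0$, because $\|\bm{w}\|\le \varepsilon$. For each of the finitely many indices $k<k_0$, the sequence $(0,\ldots,0,w_k,0,\ldots)$ has finite support, hence belongs to $c_{00}(X,\mathcal{I})$, so \ref{item:T5sharp} gives $\mathcal{J}\text{-}\lim_n A_{n,k}w_k=0$; summing these, $\mathcal{J}\text{-}\lim_n \sum_{k<k_0}A_{n,k}w_k=0$. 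Combining the two estimates produces $\mathcal{J}\text{-}\limsup_n \|A_n\bm{w}\|\le M\varepsilon$, whence $\mathcal{J}\text{-}\limsup_n\|A_n\bm{x}-T\eta\|\le M\varepsilon$. Since $\varepsilon$ is arbitrary, $\mathcal{J}\text{-}\lim A\bm{x}=T\eta$, as required.

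For the necessary direction, three of the four conditions are essentially formal: \ref{item:T3sharp} is exactly the hypothesis $c(X,\mathcal{I})\subseteq \mathrm{dom}(A)$; \ref{item:T4} follows by applying \eqref{eq:unoundedunboundeddefinition} to each constant sequence $(x,x,\ldots)$; and \ref{item:T5sharp} follows because every $\bm{x} \in c_{00}(X,\mathcal{I})\subseteq c(X,\mathcal{I})$ has $\mathcal{I}$-limit $0$, so $\mathcal{J}\text{-}\lim A\bm{x}=T\cdot 0=0$. The main obstacle will be \ref{item:T1flatflat}, for which I would argue by contradiction: fix $n$ and assume $\|A_{n,\ge k}\|=\infty$ for every $k$. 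A standard gliding-hump construction then produces consecutive finite blocks $F_1<F_2<\cdots$ in $\omega$ with $\min F_{s+1}>\max F_s$ and vectors $x_j^{(s)}\in B_X$ for $j \in F_s$ satisfying $\|\sum_{j \in F_s}A_{n,j}x_j^{(s)}\|>2s^2$. Setting $x_j:=x_j^{(s)}/s$ for $j \in F_s$ and $x_j:=0$ otherwise gives $\bm{x} \in c_0(X)\subseteq c(X,\mathcal{I})$ (since $\min F_s\to\infty$), while $\|\sum_{j\in F_s}A_{n,j}x_j\|>2s$ for each $s$ forces the partial sums of $A_n\bm{x}$ to be non-Cauchy, contradicting \ref{item:T3sharp}. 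This is exactly where the asymmetry in the two directions appears: from $\bm{x} \in c(X,\mathcal{I})\subseteq \mathrm{dom}(A)$ one can only extract the pointwise tail-finiteness \ref{item:T1flatflat}, not the uniform-over-$J_0$ statement \ref{item:T1flat}.
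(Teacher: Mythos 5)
Your proof is correct. The sufficient direction is, up to packaging, the paper's own argument: the paper splits $A_n\bm{x}$ over the index ranges $k<k_0$, $k\in E$ with $k\ge k_0$, and $k\notin E$ with $k\ge k_0$ (first to get $\mathcal{J}$-boundedness of $A\bm{x}$, then reducing to the null case exactly as in the proof of Theorem \ref{main:IJREGULAR}), while you decompose the sequence itself as $\bm{y}_\eta+\bm{z}+\bm{w}$ and estimate the three transforms separately via \ref{item:T4}, \ref{item:T5sharp}, and the group-norm bound from \ref{item:T1flat}; the two computations are the same, and your version is more self-contained. The one genuine divergence is in the converse direction, on condition \ref{item:T1flatflat}: the paper obtains it by citing Theorem \ref{main:IJREGULARboundedUnbounded}, whose necessity argument for the stronger \ref{item:T1flat} runs through Theorem \ref{thm:ctoell(X)} and formally requires $\mathcal{J}$ to be strongly selective --- a hypothesis absent from the present theorem --- whereas you prove the row-by-row statement \ref{item:T1flatflat} directly by a gliding-hump construction applied to a single row, which is precisely the necessity half of Lemma \ref{lem:convergenceoperatorbetadualc(X)} and needs no assumption on $\mathcal{J}$ whatsoever. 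Your route is better matched to what the theorem actually asserts, at the modest cost of reproving a cited lemma.
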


It turns out that we obtain a complete characterization if $\mathcal{J}$ is a strongly selective ideal:
\begin{thm}\label{thm:cXcYrapid}
Fix a linear operator $T \in \mathcal{L}(X,Y)$, where $X,Y$ are Banach spaces. 
Let also $\mathcal{I},\mathcal{J}$ be an ideals on $\omega$ such that $\mathcal{J}$ is a strongly selective ideal.  

Then a matrix $A=(A_{n,k})$ of linear operators in $\mathcal{L}(X,Y)$ satisfies \eqref{eq:unoundedunboundeddefinition} 
if and only if 
there exists $k_0 \in \omega$ such that \ref{item:T1flat}, \ref{item:T3sharp}, \ref{item:T4}, and \ref{item:T5sharp} hold. 

In addition, if each $A_{n,k}$ is bounded, it is possible to choose $k_0=f(n)=0$ for all 
%$n\notin J_0$. 
$n \in \omega\setminus J_0$. 
\end{thm}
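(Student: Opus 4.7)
My plan is to deduce Theorem~\ref{thm:cXcYrapid} by combining the two earlier results Theorem~\ref{main:IJREGULARUnboundedUnbounded} and Theorem~\ref{main:IJREGULARboundedUnbounded}, exploiting the obvious inclusion $c^b(X,\mathcal{I})\subseteq c(X,\mathcal{I})$ to transfer information from the unbounded setting to the bounded one.

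The sufficiency direction requires no additional work: the hypotheses \ref{item:T1flat}, \ref{item:T3sharp}, \ref{item:T4}, \ref{item:T5sharp} are exactly the sufficient condition for \eqref{eq:unoundedunboundeddefinition} stated in Theorem~\ref{main:IJREGULARUnboundedUnbounded}, and no assumption on $\mathcal{J}$ is needed for that half (the \emph{converse} direction of Theorem~\ref{main:IJREGULARUnboundedUnbounded} is the one producing only the weaker \ref{item:T1flatflat}). Thus I would open by remarking that the \textquotedblleft if\textquotedblright\ part is immediate.

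For the \textquotedblleft only if\textquotedblright\ direction, assume $A$ satisfies \eqref{eq:unoundedunboundeddefinition}. The necessity half of Theorem~\ref{main:IJREGULARUnboundedUnbounded} supplies \ref{item:T3sharp}, \ref{item:T4}, and \ref{item:T5sharp} at once; what remains is to promote \ref{item:T1flatflat} to the stronger condition \ref{item:T1flat}, i.e.\ to exhibit some $J_0\in \mathcal{J}^\star$ and some $k_0\in\omega$ on which $\sup_{n\in J_0}\|A_{n,\ge k_0}\|<\infty$. For this, I would simply restrict $A$ to the subspace $c^b(X,\mathcal{I})\subseteq c(X,\mathcal{I})$: by \eqref{eq:unoundedunboundeddefinition}, and since $\mathcal{J}$-convergent sequences are in particular $\mathcal{J}$-bounded but here even better $A\bm{x}\in c(Y,\mathcal{J})$, the restriction of $A$ fulfills \eqref{eq:boundedunboundeddefinitiona}. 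Because $\mathcal{J}$ is strongly selective (in particular a rapid$^+$ $P^+$-ideal), Theorem~\ref{main:IJREGULARboundedUnbounded} applies and yields exactly \ref{item:T1flat} for some $k_0$ and some function $f$ as required. The \textquotedblleft In addition\textquotedblright\ clause about bounded $A_{n,k}$ then transfers verbatim from the analogous clause of Theorem~\ref{main:IJREGULARboundedUnbounded}, giving $k_0=f(n)=0$ for all $n\in\omega\setminus J_0$.

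Since the only nontrivial step is the promotion \ref{item:T1flatflat}$\Rightarrow$\ref{item:T1flat}, and since this promotion is precisely what Theorem~\ref{main:IJREGULARboundedUnbounded} was designed to handle via the De~Bondt--Vernaeve ideal Banach--Steinhaus theorem, the \textquotedblleft hard part\textquotedblright\ is already absorbed into an earlier result; here there is essentially nothing further to prove beyond noting the inclusion $c^b(X,\mathcal{I})\subseteq c(X,\mathcal{I})$ and verifying that the $k_0$ produced by Theorem~\ref{main:IJREGULARboundedUnbounded} works simultaneously in the remaining conditions (which it does, since \ref{item:T3sharp}, \ref{item:T4}, \ref{item:T5sharp} do not depend on the choice of $k_0$).
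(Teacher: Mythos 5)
Your proposal is correct and follows essentially the same route as the paper: reduce everything to Theorem \ref{main:IJREGULARUnboundedUnbounded} and then upgrade \ref{item:T1flatflat} to \ref{item:T1flat} using the strong selectivity of $\mathcal{J}$. The only cosmetic difference is that the paper obtains \ref{item:T1flat} by applying Theorem \ref{thm:ctoell(X)} directly to the inclusion $A\in(c(X),\ell_\infty(Y,\mathcal{J}))$, whereas you route through the necessity half of Theorem \ref{main:IJREGULARboundedUnbounded} via $c^b(X,\mathcal{I})\subseteq c(X,\mathcal{I})$ --- but that theorem's own proof of \ref{item:T1flat} rests on the same Theorem \ref{thm:ctoell(X)}, so the two arguments coincide in substance.
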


Some additional properties can be obtained in special cases:
\begin{rmk}
Suppose that $\mathcal{I}$ is tall ideal. Then \ref{item:T3sharp} implies, thanks to Lemma \ref{lem:dualc00I}, that $A$ is \emph{row finite}, namely, $\{k \in \omega: A_{n,k}\neq 0\} \in \mathrm{Fin}$ for all $n\in \omega$. 
\end{rmk}
\begin{rmk}\label{rmk:necessryT5sharp}
Suppose that $\mathcal{J}$ is a countably generated ideal. Then \ref{item:T5sharp} implies, thanks to Theorem  \ref{thm:Xomegaellinfty}, that 
for every infinite $E \in \mathcal{I}$ there exists $J \in \mathcal{J}^\star$ such that $\{k \in E: A_{n,k}\neq 0 \text{ for some }n \in J\}$ is finite. 
%$$
%\forall E \in \mathcal{I}\cap \mathrm{Fin}^+, \exists J \in \mathcal{J}^\star, \exists k_1 \in \omega, \forall k\ge k_1, \forall n \in J, \quad A_{n,k}=0.
%$$
\end{rmk}

%%%%%%%%%%%%%%%%%%%%%%%%%%%%%
%%%%%%%%%%%%%%%%%%%%%%%%%%%%%%

\section{Preliminaries}\label{sec:preliminaries}

%(p.20) 

Unless otherwise stated, 
%in this section 
we assume that $X,Y$ are Banach spaces. 
%We assume that the space of linear operators $\mathcal{L}(X)$  from $X$ into itself and all its subspaces are endowed with the strong operator topology, so that a sequence $(T_n)$ in $\mathcal{L}(X)$ converges to $T \in \mathcal{L}(X)$ if and only if the sequence $(T_nx)$ is norm convergent to $Tx$ for all $x \in X$. 
%Its subspace of bounded linear operators is denoted by $\mathcal{B}(X)$. 
We recall the following results on the so-called K\"othe--Toeplitz $\beta$-duals: 
%Lemma \ref{thm:maddoxmain} has been obtained also in \cite[Theorem 1]{MR0447877} for the case of bounded linear operators. 
%. However, we provide the proof for the reader's convenience. 
%Lemma \ref{thm:maddoxmain} follows from the proof of \cite[Theorem 1]{MR0447877}, cf. also \cite[Theorem 1]{MR0447877} for the case of bounded linear operators. 
%However, we provide the proof for the reader's convenience.
\begin{lem}\label{lem:convergenceoperator}
Let $(T_k)$ be a sequence of linear operators in $\mathcal{L}(X,Y)$. Then $\sum_k T_kx_k$ is convergent in the norm of $Y$ for all sequences $\bm{x} \in \ell_\infty(X)$ if and only if\textup{:} % if there exists $k_0 \in \omega$ such that
\begin{enumerate}[label={\rm (\textsc{N}\arabic{*})}]
%\item \label{item:N1} $T_k \in \mathcal{B}(X,Y)$ for all $k\ge k_0$\textup{;}
%\item \label{item:N2} $\|(T_{k_0}, T_{k_0+1},\ldots)\|<\infty$ for some $k_0 \in \omega$\textup{;}
\item \label{item:N2} $\|T_{\ge k_0}\|<\infty$ for some $k_0 \in \omega$\textup{;}
\item \label{item:N3} $\lim_k\|T_{\ge k}\|=0$\textup{.}
\end{enumerate}
In addition, if each $T_k$ is bounded, it is possible to choose $k_0=0$. 
\end{lem}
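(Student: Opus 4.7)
The plan is to prove both implications in the standard K\"othe--Toeplitz style: sufficiency via a Cauchy argument in $Y$, and necessity via two separate gliding hump constructions, one for each condition.

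For \emph{sufficiency}, assume \ref{item:N2} and \ref{item:N3} and let $\bm{x} \in \ell_\infty(X)$ with $M := \|\bm{x}\|$. The initial segment $\sum_{k < k_0} T_k x_k$ is a finite sum, well-defined in $Y$. For the tail, I would show that the partial sums $s_n := \sum_{k_0 \le k \le n} T_k x_k$ form a Cauchy sequence: for $n > m \ge k_0$, since $\{m+1,\ldots,n\}$ is a finite subset of $\{m+1,m+2,\ldots\}$ and each nonzero $x_k/M$ lies in $B_X$, the definition of the group norm gives $\|s_n - s_m\| \le M \cdot \|T_{\ge m+1}\|$, which tends to $0$ by \ref{item:N3}. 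Completeness of $Y$ then delivers convergence of $\sum_k T_k x_k$.

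For \emph{necessity}, I argue by contradiction. Suppose first that \ref{item:N2} fails, i.e., $\|T_{\ge k}\| = \infty$ for every $k \in \omega$. I would recursively build indices $0 = k_0 < k_1 < \cdots$, finite blocks $F_i \subseteq [k_i, k_{i+1})$, and unit vectors $x_j^{(i)} \in B_X$ for $j \in F_i$ so that $\|\sum_{j \in F_i} T_j x_j^{(i)}\| \ge i$; this is possible at stage $i$ precisely because $\|T_{\ge k_i}\| = \infty$. Setting $x_j := x_j^{(i)}$ on each $F_i$ and $x_j := 0$ elsewhere yields $\bm{x} \in B_X^\omega$, yet the partial sums of $\sum_k T_k x_k$ jump by at least $i$ across the block $F_i$, contradicting convergence. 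Assuming now that \ref{item:N2} holds, the sequence $(\|T_{\ge k}\|)_{k \ge k_0}$ is nonincreasing and finite; if \ref{item:N3} failed it would be bounded below by some $\varepsilon > 0$, and an analogous gliding hump with each block contributing norm at least $\varepsilon/2$ again produces a bounded $\bm{x}$ whose partial sums are not Cauchy.

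For the final clause, assume each $T_k$ is bounded and that \ref{item:N2}--\ref{item:N3} hold for some $k_0 \in \omega$. Splitting any finite $F \subseteq \omega$ as $(F \cap [0,k_0)) \cup (F \cap [k_0,\infty))$ and applying the triangle inequality gives, for every choice of $x_k \in B_X$, the bound $\|\sum_{k \in F} T_k x_k\| \le \sum_{k < k_0} \|T_k\| + \|T_{\ge k_0}\|$, so taking the supremum yields $\|T_{\ge 0}\| < \infty$ and one may take $k_0 = 0$. I expect the main obstacle to be the bookkeeping in the gliding hump: one must ensure the blocks $F_i$ are pairwise disjoint and pushed strictly to the right of all previously chosen indices, and justify the selection of unit vectors witnessing the required group-norm lower bound, which is exactly what the definition of $\|T_{n,E}\|$ (supremum over finite $F \subseteq E$ and $x_k \in B_X$) permits.
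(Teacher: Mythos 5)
Your proof is correct. The paper does not actually prove this lemma in-house --- its ``proof'' is a citation to Propositions 3.1 and 3.3 of Maddox's monograph on infinite matrices of operators --- and your argument (the Cauchy tail estimate $\|s_n-s_m\|\le \|\bm{x}\|\cdot\|T_{\ge m+1}\|$ via the group norm for sufficiency, two gliding-hump constructions for the necessity of the two conditions, and absorption of the finitely many bounded initial operators for the $k_0=0$ clause) is precisely the standard argument given in that reference; the only cosmetic point is to dispose of the trivial case $\|\bm{x}\|=0$ before normalizing by $M$.
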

\begin{proof}
%See \cite[Proposition 3.3]{MR568707}; this has been shown in \cite[Lemma 1]{MR0447877} for bounded operators.
%See \cite[Lemma 1]{MR0447877}.
See \cite[Proposition 3.1 and Proposition 3.3]{MR568707}. %cf. also \cite[Lemma 1]{MR0447877} for the case where $A_k \in \mathcal{B}(X,Y)$ for all $k \in \mathbf{N}$.
\end{proof}
In particular, \ref{item:N2} implies that $T_k$ is bounded for all $k\ge k_0$. 
%\textcolor{red}{\ref{item:N2} $\implies$ \ref{item:N1} CANCELLARE O METTERE RMK} 
We remark that, if $X=\mathbf{R}$ and each linear operator $T_k$ can be written as $T_kx=xy_k$, for some $y_k \in Y$, then the sequence $(T_k)$ of Lemma \ref{lem:convergenceoperator} is also called \textquotedblleft bounded multiplier convergent,\textquotedblright\, see e.g. \cite{MR704294}. 

%\textcolor{red}{Comment Maddox about False converse implication}

\begin{lem}\label{lem:convergenceoperatorbetadualc(X)}
Let $(T_k)$ be a sequence of linear operators in $\mathcal{L}(X,Y)$. Then $\sum_k T_kx_k$ is norm convergent in $Y$ for all $\bm{x} \in c(X)$ if and only if 
\ref{item:N2} holds for some $k_0$, together with\textup{:}
%there exists $k_0 \in \omega$ such that conditions \ref{item:N1}-\ref{item:N2} hold, together with
\begin{enumerate}[label={\rm (\textsc{N}\arabic{*}$^\prime$)}]
\setcounter{enumi}{1}
%\item $A_k \in \mathcal{B}(X,Y)$ for all $k\ge k_0$\textup{;}
%\item $\|(A_{k_0}, A_{k_0+1},\ldots)\|<\infty$\textup{;}
\item \label{item:N2prime} $\sum_kT_k$ converges in the strong operator topology\textup{.}
%, i.e., $\sum_k T_kx$ is norm convergent for each $x\in X$\textup{.}
\end{enumerate}
In addition, if each $T_k$ is bounded, it is possible to choose $k_0=0$. 
\end{lem}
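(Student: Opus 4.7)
\medskip

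My plan is to verify the two directions separately, treating the final sentence about bounded $T_k$'s as a short coda.

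For the sufficiency direction, I would assume \ref{item:N2} and \ref{item:N2prime} hold and pick any $\bm{x}=(x_k)\in c(X)$ with limit $\eta$. The strategy is to split $x_k=\eta+(x_k-\eta)$, so that
\[
\sum\nolimits_k T_kx_k \;=\; \sum\nolimits_k T_k\eta \;+\; \sum\nolimits_k T_k(x_k-\eta).
\]
The first series converges in the norm of $Y$ straight from \ref{item:N2prime} applied at the vector $\eta$. For the second series, write $z_k:=x_k-\eta$, so $z_k\to 0$, and set $\varepsilon_m:=\sup_{j\ge m}\|z_j\|$, which tends to $0$. For $k_0\le m\le n$ and the finite set $F=[m,n]$, each $z_k$ equals $\varepsilon_m\,w_k$ with $\|w_k\|\le 1$ (choosing $w_k=0$ if $\varepsilon_m=0$), and thus the definition of the group norm gives $\|\sum_{k=m}^n T_k z_k\|=\varepsilon_m\,\|\sum_{k=m}^n T_k w_k\|\le \varepsilon_m\,\|T_{\ge k_0}\|$. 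Hence the partial sums are Cauchy and the second series converges as well.

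For the necessity direction, \ref{item:N2prime} is immediate: the constant sequence $(x,x,\ldots)$ lies in $c(X)$ for every $x\in X$, so the hypothesis forces $\sum_kT_kx$ to converge, that is, $\sum_kT_k$ converges in the strong operator topology. The substantive part is the necessity of \ref{item:N2}, and this is where I expect the main technical work. I would argue by contradiction via a gliding hump. Suppose $\|T_{\ge k}\|=\infty$ for every $k\in\omega$. I will inductively build disjoint consecutive finite blocks $F_1<F_2<\cdots$ of $\omega$ and unit vectors $y_k\in B_X$ for $k\in\bigcup_jF_j$ such that
\[
\Bigl\|\textstyle\sum_{k\in F_j}T_ky_k\Bigr\|\;\ge\;j\qquad(j\ge 1).
\]
At stage $j$, having fixed $m_j>\max F_{j-1}$, the identity $\|T_{\ge m_j}\|=\infty$ lets me pick a finite $F_j\subseteq[m_j,\infty)$ and unit vectors $(y_k)_{k\in F_j}$ realising the bound above, directly from the supremum in the definition of the group norm. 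Set $x_k:=y_k/j$ for $k\in F_j$ and $x_k:=0$ otherwise. Since $\|x_k\|\le 1/j$ on $[m_j,\infty)$, we get $\bm{x}\in c_0(X)\subseteq c(X)$, but the partial sums of $\sum_kT_kx_k$ satisfy $\|\sum_{k\in F_j}T_kx_k\|\ge 1$ for every $j$, so the series is not Cauchy; this contradicts the hypothesis.

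Finally, for the bounded-operator addendum, I would just use subadditivity of the group norm over disjoint index sets (which falls out of the definition via the triangle inequality in $Y$): $\|T_{\ge 0}\|\le \|T_{[0,k_0-1]}\|+\|T_{\ge k_0}\|\le\sum_{k<k_0}\|T_k\|+\|T_{\ge k_0}\|<\infty$, so $k_0=0$ works whenever every $T_k$ is bounded. The real obstacle is the gliding hump step above, where care is required so that the blocks are genuinely disjoint, the vectors are scaled to give $c_0$-decay, and the lower bound on each block survives the scaling.
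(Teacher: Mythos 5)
Your argument is correct. Note that the paper does not actually prove this lemma: it simply cites \cite[Proposition 3.2]{MR568707}, so there is no in-text proof to compare against. What you have written is the standard self-contained argument behind that reference: the splitting $x_k=\eta+(x_k-\eta)$ combined with the group-norm Cauchy estimate for sufficiency, a gliding-hump construction of a $c_0(X)$ sequence for the necessity of \ref{item:N2}, and subadditivity of the group norm over the initial segment for the bounded-operator addendum. All three steps are sound (in particular, the scaling $x_k=y_k/j$ on the block $F_j$ does give $\|\sum_{k\in F_j}T_kx_k\|\ge 1$ while forcing $\bm{x}\in c_0(X)$), so your proposal supplies a complete proof where the paper defers to the literature.
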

\begin{proof}
%See \cite[Proposition 3.3]{MR568707}; this has been shown in \cite[Lemma 1]{MR0447877} for bounded operators.
%See \cite[Lemma 1]{MR0447877}.
See \cite[Proposition 3.2]{MR568707}. %cf. also \cite[Lemma 1]{MR0447877} for the case where $A_k \in \mathcal{B}(X,Y)$ for all $k \in \mathbf{N}$.
\end{proof}
%As observed by Maddox in \cite{MR0447877}, condition \ref{item:N2} does not imply \ref{item:N3} in general. However, it does if $X$ is finite dimensional: 

A characterization of the K\"othe--Toeplitz $\beta$-dual of a sequence space which is strictly related to $c(\mathcal{Z})$ can be found in \cite[Theorem 4]{MR261212}. 

However, if $X$ is finite dimensional, we have a simpler characterization:
\begin{cor}\label{cor:finitedimensionalVSextremepoints}
Let $(T_k)$ be a sequence of linear operators in $\mathcal{L}(X,Y)$ and assume, in addition, that $X$ is finite dimensional. 
Then the following are equivalent:
\begin{enumerate}[label={\rm (\roman*)}]
\item \label{item:1corollaryfinitedim} $\sum_kT_kx_k$ is norm convergent in $Y$ for all sequences $\bm{x} \in \ell_\infty(X)$\textup{;}
\item \label{item:2corollaryfinitedim} $\sum_kT_kx_k$ is norm convergent in $Y$ for all sequences $\bm{x} \in c(X)$\textup{;}
\item \label{item:3corollaryfinitedim} $\|T_\omega\|<\infty$\textup{.}
\end{enumerate}
\end{cor}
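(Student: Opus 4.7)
I plan to establish the corollary as the cycle (i)$\Rightarrow$(ii)$\Rightarrow$(iii)$\Rightarrow$(i). The implication (i)$\Rightarrow$(ii) is immediate since $c(X)\subseteq \ell_\infty(X)$. For (ii)$\Rightarrow$(iii), I invoke Lemma \ref{lem:convergenceoperatorbetadualc(X)}: hypothesis (ii) supplies some $k_0 \in \omega$ with $\|T_{\ge k_0}\|<\infty$. Because $X$ is finite-dimensional, every linear operator $T_k\in\mathcal{L}(X,Y)$ is automatically bounded, so the final sentence of that lemma permits the choice $k_0=0$; this gives precisely $\|T_\omega\|=\|T_{\ge 0}\|<\infty$.

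The substantive step is (iii)$\Rightarrow$(i), which I would derive from Lemma \ref{lem:convergenceoperator}. Condition \ref{item:N2} with $k_0=0$ is delivered directly by (iii), so the only remaining obligation is \ref{item:N3}, namely $\lim_k\|T_{\ge k}\|=0$. Here the finite dimensionality of $X$ plays an essential role. Replacing the given norm on $X\cong \mathbf{R}^d$ by the equivalent $\ell^\infty$-norm in a chosen basis $e_1,\ldots,e_d$, the unit ball $B_X$ becomes the cube $[-1,1]^d$ whose extreme point set $\{\pm 1\}^d$ is finite. By convexity of the norm on $Y$ we obtain, for every finite $F\subseteq\omega$, the reduction
\[
\sup_{x_k\in B_X,\,k\in F}\Big\|\sum_{k\in F}T_k x_k\Big\|=\max_{\sigma\colon F\to\{\pm 1\}^d}\Big\|\sum_{k\in F}T_k\sigma(k)\Big\|,
\]
so each tail group norm $\|T_{\ge K}\|$ is a maximum over finitely-parametrised sign-vector selectors on $[K,\infty)$.

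The main obstacle I anticipate is precisely this passage from the finiteness of $\|T_\omega\|$ to the vanishing of the tail group norms in the finite-dimensional setting (an analogous step fails in general infinite dimension, cf.\ Remark \ref{rmk:evilgroupnorm}). My approach would be by contradiction: assuming $\|T_{\ge K}\|\ge \varepsilon>0$ for all $K$, I would extract disjoint finite blocks $F_1 < F_2 < \cdots$ together with selectors $\sigma_n\colon F_n\to\{\pm 1\}^d$ witnessing $\|\sum_{F_n}T_k\sigma_n(k)\|\ge\varepsilon$. A pigeonhole over the finite set of $2^d$ sign vectors then lets me assume the $\sigma_n$ share a common value on a uniform index $j_0\in\{1,\ldots,d\}$, reducing the situation to the scalar sequence $(T_k e_{j_0})_k$ in $Y$ and to a bounded-multiplier argument that, once combined with the uniform bound $\|T_\omega\|<\infty$, yields the contradiction. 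With \ref{item:N3} so established, Lemma \ref{lem:convergenceoperator} delivers (i) and closes the cycle.
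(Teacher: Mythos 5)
Your implications (i)$\Rightarrow$(ii) and (ii)$\Rightarrow$(iii) are correct and coincide with the paper's (the latter is exactly the application of Lemma \ref{lem:convergenceoperatorbetadualc(X)} plus the automatic boundedness of each $T_k$ in finite dimensions). The gap is in (iii)$\Rightarrow$(i), and it sits precisely at the point you defer to ``a bounded-multiplier argument that \dots yields the contradiction.'' After your block extraction and pigeonhole over $\{\pm1\}^d$ you are left with disjoint finite blocks $F_1<F_2<\cdots$, signs $\varepsilon_{n,k}\in\{\pm1\}$, and vectors $z_n:=\sum_{k\in F_n}\varepsilon_{n,k}T_ke_{j_0}\in Y$ with $\|z_n\|\ge\varepsilon/d$, while $\|T_\omega\|<\infty$ only tells you that $\sup_{N}\sup_{\delta\in\{\pm1\}^{N}}\bigl\|\sum_{n\in N}\delta_nz_n\bigr\|<\infty$ over finite $N$. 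In a general Banach space $Y$ these two facts are perfectly compatible (take $z_n$ to be unit vectors of $c_0$), so no contradiction follows: the missing step is exactly the passage from a weakly unconditionally Cauchy series to an unconditionally convergent one, which fails whenever $Y$ contains a copy of $c_0$. Concretely, with $X=\mathbf{R}$, $Y=c_0$ and $T_kx:=xe_k$ one has $\|T_\omega\|=1$ while $\sum_kT_k(1)$ diverges in norm, so an argument of the shape you propose, which uses no structure of $Y$ whatsoever, cannot be completed.

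The paper's proof handles this obstruction by pigeonholing in the \emph{codomain}, not the domain: it forms block sums $y_j:=\sum_{k\in I_j}T_kx_k$ with $\|y_j\|>\varepsilon$, places infinitely many of them in a common closed quadrant of a finite-dimensional target equipped with the $1$-norm, and uses that the $1$-norm is additive on vectors lying in a common quadrant, so that $\bigl\|\sum_{j\in F}y_j\bigr\|=\sum_{j\in F}\|y_j\|\ge|F|\varepsilon$ contradicts $\|T_\omega\|<\infty$. Your reduction to the extreme points $\{\pm1\}^d$ of $B_X$ is a correct observation, but it controls cancellation among the multipliers, not cancellation among the images in $Y$, which is where the real difficulty lives. (You may note that the paper's own argument tacitly treats the $y_j$ as vectors of $\mathbf{R}^d$ although they lie in $Y$; what is actually being exploited there is finite-dimensionality of the codomain, so your instinct that this implication is the delicate step was right, but the mechanism you propose does not supply it.)
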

\begin{proof}
The implication \ref{item:1corollaryfinitedim} $\implies$ \ref{item:2corollaryfinitedim} is clear and \ref{item:2corollaryfinitedim} $\implies$ \ref{item:3corollaryfinitedim} follows by Lemma \ref{lem:convergenceoperatorbetadualc(X)}. Indeed, since $d:=\mathrm{dim}(X)<\infty$, each $T_{k}$ is bounded. 

\ref{item:3corollaryfinitedim} $\implies$ \ref{item:1corollaryfinitedim} It follows by Lemma \ref{lem:convergenceoperator} that it is enough to prove that condition \ref{item:N3} holds, provided that $\|T_\omega\|<\infty$.  
To this aim, suppose for the sake of contradiction that there exists $\varepsilon>0$ such that $\limsup_k \|T_{\ge k}\|>\varepsilon$. 
Then there exist a sequence $\bm{x}$ taking values in the closed unit ball $B_X$ and a partition $\{I_j: j \in \omega\}$ of $\omega$ in consecutive finite intervals such that 
$$
\forall j \in \omega, \quad 
\|(T_k: k \in I_j)\|\ge \left\|\sum\nolimits_{k \in I_j}T_kx_k\right\|>\varepsilon.
$$
Assume without loss of generality that $X=\mathbf{R}^d$ and, since every norm is equivalent, endow it with the $1$-norm $\|x\|:=\sum_i|x_i|$. 
Define the sequence $\bm{y}$ by $y_j:=\sum\nolimits_{k \in I_j}T_kx_k$ for all $j$. 
Let $\{Q_1,\ldots,Q_{2^d}\}$ be the collection of all closed quadrants of $\mathbf{R}^d$. 
Since $\bigcup_{i\le 2^d}Q_i=\mathbf{R}^d$, there exist $i_0 \in [1,2^d]$ and an infinite set $J\subseteq \omega$ such that $y_j \in Q_{i_0}$ for all $j \in J$. 
It follows that
\begin{displaymath}
\begin{split}
\|T_\omega\|\ge \|(T_k: k \in J)\| &
\ge \sup_{F \subseteq J, F \in \mathrm{Fin}}\left\|\sum\nolimits_{j \in F}y_j\right\| \\
&=\sup_{F \subseteq J, F \in \mathrm{Fin}} \sum\nolimits_{j \in F}\left\|y_j\right\| 
\ge \sup_{F \subseteq J, F \in \mathrm{Fin}}|F|\varepsilon =\infty, 
\end{split}
\end{displaymath}
which contradicts the standing hypothesis. 
\end{proof}

\begin{lem}\label{lem:finitedimensionalnormkjfdhgd}
Let $(T_k)$ be a sequence of linear operators in $\mathcal{L}(X,Y)$ and assume, in addition, that $X$ is finite dimensional and $\mathcal{J}\text{-}\lim_k \|T_kx\|=0$ for all $x \in X$, where $\mathcal{J}$ is an ideal on $\omega$. Then $\mathcal{J}\text{-}\lim_k \|T_k\|=0$. 
\end{lem}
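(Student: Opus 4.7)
The plan is to reduce the operator-norm statement to a finite sum of scalar $\mathcal{J}$-null sequences via a basis argument, using equivalence of norms in finite dimensions.

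First, I would fix a basis $e_1,\ldots,e_d$ of $X$, where $d=\dim(X)<\infty$. Since every linear functional on a finite-dimensional normed space is continuous, and since all norms on $X$ are equivalent, there exists a constant $C>0$ such that every $x=\sum_{i=1}^d \lambda_i(x)\, e_i$ with $\|x\|\le 1$ satisfies $|\lambda_i(x)|\le C$ for each $i$. Consequently, for every $k\in\omega$,
\[
\|T_k\|=\sup_{\|x\|\le 1}\left\|\sum_{i=1}^d \lambda_i(x)\,T_k e_i\right\|\le C\sum_{i=1}^d \|T_k e_i\|.
\]

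Next, I would apply the hypothesis to each of the $d$ vectors $e_1,\ldots,e_d$, obtaining $\mathcal{J}\text{-}\lim_k \|T_k e_i\|=0$ for every $i\le d$. Since $\mathcal{J}$ is closed under finite unions, a finite sum of $\mathcal{J}$-null nonnegative scalar sequences is again $\mathcal{J}$-null; explicitly, for any $\varepsilon>0$,
\[
\left\{k\in\omega:\sum_{i=1}^d\|T_ke_i\|\ge \varepsilon\right\}\subseteq \bigcup_{i=1}^d\left\{k\in\omega:\|T_ke_i\|\ge \varepsilon/d\right\}\in\mathcal{J}.
\]
Combining this with the previous bound yields $\{k:\|T_k\|\ge \eta\}\in\mathcal{J}$ for every $\eta>0$, i.e.\ $\mathcal{J}\text{-}\lim_k\|T_k\|=0$.

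There is no substantial obstacle here; the only point to be careful about is that the constant $C$ depends on the fixed basis but is independent of $k$, which is what lets the uniform estimate on $\|T_k\|$ pass through the ideal limit. Finite dimensionality is used twice and essentially: it guarantees that each $T_k$ is bounded (so $\|T_k\|$ is finite to begin with) and it provides the equivalence-of-norms constant $C$.
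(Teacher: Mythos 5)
Your proof is correct and follows essentially the same route as the paper: both reduce $\|T_k\|$ to a bound by a finite sum $\sum_v \|T_k v\|$ over a fixed finite set of vectors and then use that $\mathcal{J}$ is closed under finite unions. The only cosmetic difference is that the paper picks its finite test set as the extreme points of the $\ell_1$-unit ball of $\mathbf{R}^d$ (where the operator norm is attained), while you use a basis together with the boundedness of the coordinate functionals.
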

\begin{proof}
Since $d:=\mathrm{dim}(X)<\infty$, each $T_k$ is bounded. 
Assume without loss of generality that $X=\mathbf{R}^d$ and endow it with the $1$-norm as in the proof of Corollary \ref{cor:finitedimensionalVSextremepoints}. 
Note that the set $\mathscr{E}$ of extreme points of the closed unit ball is finite. 
Hence, for each $k \in \omega$, there exists $e_k \in \mathscr{E}$ such that $\|T_k\|=\|T_ke_k\|$. 
It follows that
$$
\mathcal{J}\text{-}\lim\nolimits_k \|T_k\|\le 
\sum\nolimits_{e \in \mathscr{E}}\mathcal{J}\text{-}\lim\nolimits_k \|T_ke\|=0,
$$
which completes the proof. 
\end{proof}

As anticipated, we will need an ideal version of the Banach--Steinhaus theorem, which has been recently obtained in \cite{MR4172859}. 
\begin{thm}\label{thm:uniformJboundedness}
Let $\mathcal{J}$ be a rapid$^+$ $P^+$-ideal on $\omega$. Also, let $(T_n)$ be a sequence of linear operators in $B(X,Y)$ and suppose that 
$$
\forall x\in X, \quad 
\mathcal{J}\text{-}\limsup\nolimits_n \|T_nx\|<\infty.
$$
Then $\mathcal{J}\text{-}\limsup_n\|T_n\|<\infty$. 
\end{thm}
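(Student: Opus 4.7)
The plan is to argue by contradiction, mimicking the classical Banach--Steinhaus proof but with a gliding-hump construction that exploits both ideal properties of $\mathcal{J}$. Assume that $\mathcal{J}\text{-}\limsup_n \|T_n\|=\infty$; then for every $m\in\omega$ the set $S_m:=\{n\in\omega:\|T_n\|>m\}$ lies in $\mathcal{J}^+$, and the family $(S_m)$ is decreasing in $\mathcal{J}^+$. The goal is to produce a single vector $x\in X$ with $\mathcal{J}\text{-}\limsup_n\|T_nx\|=\infty$, contradicting the standing hypothesis.

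Next, I would apply the two structural properties of $\mathcal{J}$ in sequence. First, the $P^+$-property applied to $(S_m)$ yields $S\in\mathcal{J}^+$ with $S\setminus S_m$ finite for every $m$, which is to say that $\|T_n\|\to\infty$ as $n\to\infty$ through $S$. Second, let $\psi:\omega\to\omega$ be the (non-decreasing) function with $\psi(k):=\min\{N:\|T_n\|\ge G(k)$ for all $n\in S,\,n\ge N\}$, where $G$ is any rapidly growing comparison function to be fixed below. Applying rapid$^+$ to $S$ with the infinite sparse set $F:=\{\psi(k):k\in\omega\}$ gives a subset $S'\subseteq S$, $S'\in\mathcal{J}^+$, whose increasing enumeration $(n_k)_{k\in\omega}$ satisfies $n_k\ge\psi(k)$, and consequently $\|T_{n_k}\|\ge G(k)$. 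Thus we may assume $\|T_{n_k}\|$ grows as fast as desired along $S'\in\mathcal{J}^+$.

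Now I would build $x$ by a gliding-hump induction. Choose unit vectors $u_k\in S_X$ with $\|T_{n_k}u_k\|\ge\|T_{n_k}\|/2$ and positive scalars $c_k$ such that $x:=\sum_k c_k u_k$ converges absolutely and $\|T_{n_k}x\|\to\infty$. The crucial point is that once $u_0,\ldots,u_{k-1}$ have been fixed, the standing hypothesis supplies, for each $j<k$, a set $\{n:\|T_nu_j\|\le M_j\}\in\mathcal{J}^\star$; their finite intersection with $S'$ still belongs to $\mathcal{J}^+$, and within it $\|T_n\|$ is still unbounded, so I may select $n_k$ in this $\mathcal{J}^+$-set with $\|T_{n_k}\|$ arbitrarily large. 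The weights $c_k$ are then chosen to satisfy two competing constraints: $c_k\|T_{n_k}\|/2\ge k+\sum_{j<k}c_jM_j+1$, which controls the ``diagonal'' contribution, and $c_k\le 2^{-k}/\max_{j<k}\|T_{n_j}\|$, which forces absolute convergence of $\sum c_k u_k$ and keeps future perturbations $\|T_{n_j}\|\cdot\sum_{\ell>j}c_\ell$ tame. Choosing $G$ (and thus $\|T_{n_k}\|$) large enough makes the lower bound on $c_k$ compatible with the upper bound at every stage. Replacing $S'$ by the subsequence $(n_k)$ picked this way is legitimate because each selection was made inside a $\mathcal{J}^+$-subset of $S'$; at the end of the induction the chosen index set $J:=\{n_k:k\in\omega\}$ belongs to $\mathcal{J}^+$ (one more appeal to $P^+$ along the decreasing $\mathcal{J}^+$-sets used at each step, or a direct diagonal enumeration), and $\|T_nx\|\to\infty$ along~$J$.

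Finally, since $J\in\mathcal{J}^+$ and $\|T_nx\|\to\infty$ on $J$, for every $M$ the set $\{n:\|T_nx\|>M\}$ contains a cofinite tail of $J$, hence lies in $\mathcal{J}^+$; this shows $\mathcal{J}\text{-}\limsup_n\|T_nx\|=\infty$, contradicting the hypothesis. The main obstacle is the gliding-hump step: the cross-terms $c_j\|T_{n_k}u_j\|$ for $j\ne k$ are not automatically small, so the rapid$^+$ and $P^+$ properties must be invoked together with the pointwise hypothesis $\mathcal{J}\text{-}\limsup_n\|T_nu_j\|<\infty$ (applied to each previously constructed $u_j$) in order to restrict the selection of $n_k$ to a $\mathcal{J}^+$-set on which these cross-terms are uniformly bounded. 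This explains why the result genuinely requires both ideal properties, in agreement with the characterization \cite[Theorem~5.1]{MR4172859}.
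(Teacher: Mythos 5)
The paper does not prove this theorem at all: it is quoted from De Bondt and Vernaeve, \cite[Theorem 3.1(b)]{MR4172859}, so any self-contained argument is automatically a different route. Your sketch has the right overall shape (gliding hump, $P^+$ to get a single $\mathcal{J}^+$-set $S$ along which $\|T_n\|\to\infty$, rapid$^+$ to sparsify), but it has a genuine gap at the decisive step, namely the claim that the final index set $J=\{n_k:k\in\omega\}$ belongs to $\mathcal{J}^+$. Choosing one point $n_k$ from each member of a decreasing sequence of $\mathcal{J}^+$-sets gives no control whatsoever on the $\mathcal{J}$-status of $\{n_k\}$: for a maximal rapid$^+$ $P^+$-ideal (a selective coideal, which exists under Martin's axiom and is exactly the kind of ideal the theorem is meant to cover) a countable transversal of this kind will typically lie in $\mathcal{J}$, and then $\|T_nx\|\to\infty$ along $J$ says nothing about $\mathcal{J}\text{-}\limsup_n\|T_nx\|$. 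The proposed repair, ``one more appeal to $P^+$ along the decreasing $\mathcal{J}^+$-sets used at each step,'' is circular: the set $R_k$ at stage $k$ is defined from the vectors $u_0,\dots,u_{k-1}$, which are defined from the points $n_0,\dots,n_{k-1}$, which are supposed to be drawn from the pseudo-intersection that $P^+$ is meant to produce at the end. Moreover, even if one fixes a choice $n\mapsto u_n$ for \emph{all} $n\in S'$ in advance and obtains a pseudo-intersection $V\in\mathcal{J}^+$ with $V\setminus R_k$ finite for every $k$, the estimate you need is ``for $\mathcal{J}$-almost all $n\in V$, the cross-terms $\|T_nu_m\|\le M_m$ hold simultaneously for all $m\in V$ with $m<n$,'' and the exceptional set $\bigcup_m\bigl((V\setminus G_m)\cap(m,\infty)\bigr)$ is a union of finite sets that need not be $\mathcal{J}$-small; nothing in your argument rules out that it exhausts $V$.

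A second, related soft spot: the growth function $G$ must be fixed before rapid$^+$ is applied, but the growth you actually need for $\|T_{n_k}\|$ at stage $k$ depends on the constants $M_0,\dots,M_{k-1}$, which are only available after the vectors $u_0,\dots,u_{k-1}$ (hence after $n_0,\dots,n_{k-1}$) have been chosen; you silently switch from ``$S'$ is sparse'' to ``I re-select $n_k$ with $\|T_{n_k}\|$ arbitrarily large,'' which is exactly the point selection that destroys $\mathcal{J}^+$-ness. The fact, recorded in the paper right after Theorem \ref{main:IJREGULARboundedUnbounded}, that this statement \emph{characterizes} rapid$^+$ $P^+$-ideals is a strong indication that the two combinatorial properties must interact in a more delicate fusion than ``first $P^+$, then rapid$^+$, then pick points''; closing this gap is essentially the content of \cite[Theorem 3.1]{MR4172859}, and your argument as written does not reproduce it.
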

\begin{proof}
It follows by \cite[Theorem 3.1(b)]{MR4172859}.
\end{proof}

%In the following we are going to use also the following result on unbounded operators due to Lorentz and Macphail:

The following result on unbounded operators is due to Lorentz and Macphail \cite{MR52533} in the case $\mathcal{J}=\mathrm{Fin}$, see also \cite[Theorem 4.1]{MR568707} for a textbook exposition. %The proof for an arbitrary $\mathcal{J}$ goes verbatim. 
\begin{thm}\label{thm:lorentzmacphail}
Let $(T_n)$ be a sequence of linear operators in $\mathcal{L}(X,Y)$. Let also $(M_n)$ be a decreasing sequence of closed linear subspaces of $X$ such that each $T_n$ is bounded on $M_n$. 

Lastly, fix a strongly selective ideal $\mathcal{J}$ on $\omega$ and suppose that $(T_nx) \in \ell_\infty(Y,\mathcal{J})$ for all $x \in X$. There there exist $n_0 \in \omega$ and $J^\star \in \mathcal{J}^\star$ such that $T_n$ is bounded on $M_{n_0}$ for all $n \in J^\star$.
\end{thm}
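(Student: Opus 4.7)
The plan is a contradiction argument that extracts, via strong selectivity of $\mathcal{J}$, a subsequence of indices $(x_n)$ along which one can perform a gliding-hump construction producing a single vector $y \in X$ for which $\|T_{x_{n+1}} y\|$ is unbounded along a $\mathcal{J}^+$-set. Assume the conclusion fails, so that for every $n_0 \in \omega$ the set $S_{n_0} := \{n \in \omega : T_n|_{M_{n_0}} \text{ is unbounded}\}$ lies in $\mathcal{J}^+$. Since $M_{n_0+1} \subseteq M_{n_0}$, boundedness of $T_n$ on the larger subspace would force boundedness on the smaller, hence $S_{n_0+1} \subseteq S_{n_0}$ and $(S_{n_0})$ is a decreasing sequence in $\mathcal{J}^+$. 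Applying strong selectivity, I obtain an enumerated set $S = \{x_0 < x_1 < \cdots\} \in \mathcal{J}^+$ with $x_{n+1} \in S_{x_n}$ for every $n$; equivalently, $T_{x_{n+1}}$ is \emph{unbounded} on $M_{x_n}$, while, by the standing hypothesis, $T_{x_{n+1}}$ is bounded on the smaller subspace $M_{x_{n+1}}$ with some finite operator norm $C_{n+1}$.

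Now carry out the gliding hump: recursively pick $y_n \in M_{x_n}$ with $\|y_n\| \le 1$ and
$$
\|T_{x_{n+1}} y_n\| \ge 2^n (n + B_n + C_{n+1}),
$$
where $B_n := \sum_{k<n} 2^{-k}\|T_{x_{n+1}} y_k\|$ is a finite quantity determined by data selected at earlier stages; this is possible since $T_{x_{n+1}}$ is unbounded on $M_{x_n}$. Set
$$
y := \sum_{n \ge 0} 2^{-n} y_n \in X,
$$
the series being absolutely norm-convergent. For each fixed $n$, decompose $y = y' + 2^{-n} y_n + y''$ with $y' := \sum_{k<n} 2^{-k} y_k$ and $y'' := \sum_{k>n} 2^{-k} y_k$. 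Because $y_k \in M_{x_k} \subseteq M_{x_{n+1}}$ for every $k > n$, and $M_{x_{n+1}}$ is closed, $y'' \in M_{x_{n+1}}$ with $\|y''\| \le 2^{-n}$. The triangle inequality then gives
$$
\|T_{x_{n+1}} y\| \ge 2^{-n} \|T_{x_{n+1}} y_n\| - \|T_{x_{n+1}} y'\| - C_{n+1} \|y''\| \ge n,
$$
by the choice of $y_n$ and the bounds on $y''$ and $y'$.

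To conclude, observe that for every $r \in \omega$ the set $\{x_{n+1} : n \ge r\}$ is cofinite in $S$ and therefore belongs to $\mathcal{J}^+$, so $\{m \in \omega : \|T_m y\| \ge r\} \supseteq \{x_{n+1} : n \ge r\} \in \mathcal{J}^+$ for every $r$; consequently $\mathcal{J}\text{-}\limsup_n \|T_n y\| = \infty$, contradicting the assumption that $(T_n y) \in \ell_\infty(Y, \mathcal{J})$. The main obstacle is the bookkeeping of the gliding hump, and this is precisely where \emph{strong} selectivity (rather than mere selectivity or rapid$^+$ $P^+$) is essential: the condition $x_{n+1} \in S_{x_n}$ pairs the ``bad'' operator index $x_{n+1}$ with a witness $y_n$ drawn from $M_{x_n}$, while simultaneously forcing every subsequent hump $y_k$ ($k>n$) into the smaller subspace $M_{x_{n+1}}$ on which $T_{x_{n+1}}$ is bounded. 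Without this alignment the tail $y''$ could escape every subspace of bounded action for $T_{x_{n+1}}$, and the lower bound on $\|T_{x_{n+1}} y\|$ would collapse.
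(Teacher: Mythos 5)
Your proof is correct and follows essentially the same route as the paper: define the decreasing sequence $S_{n}$ of indices of operators unbounded on $M_{n}$, reduce to the case where each $S_{n}\in\mathcal{J}^{+}$, apply strong selectivity to extract $\{x_n\}\in\mathcal{J}^+$ with $x_{n+1}\in S_{x_n}$, and derive a contradiction with $\mathcal{J}$-boundedness of $(T_n y)$. The only difference is that you write out in full the gliding-hump construction of the vector $y$, which the paper delegates to the proof of \cite[Theorem 4.1]{MR568707}; your explicit version (including the correct use of $y''\in M_{x_{n+1}}$ and the bound $C_{n+1}$) is sound.
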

\begin{proof}
For each $n \in \omega$, define 
$$
S_n:=\left\{k \in \omega: T_k \text{ is not bounded on }M_n\right\}.
$$ 
Note that $(S_n)$ is a decreasing sequence and $S_n \cap [0,n]=\emptyset$ for all $n \in\omega$. First, suppose that there exists $n_0 \in \omega$ such that $S_{n_0} \in \mathcal{J}$. It follows that $T_k$ is bounded on $M_{n_0}$ for all $k \in J^\star:=\omega \setminus S_{n_0} \in \mathcal{J}^\star$. Hence, suppose hereafter that $(S_n)$ is a decreasing sequence in $\mathcal{J}^+$. Since $\mathcal{J}$ is strongly selective, there exists $S \in \mathcal{J}^+$ with increasing enumeration $(k_n)$ with the property that $k_{n+1} \in S_{k_n}$ for all $n \in \omega$. In other words, we have
$$
\{k_n: n \in \omega\} \in \mathcal{J}^+ 
\quad \text{ and }\quad 
T_{k_{n+1}} \text{ is not bounded on }M_{k_n} \text{ for all }n \in \omega.
$$ 
It follows by the proof in \cite[Theorem 4.1]{MR568707} that there exists $x \in X$ such that $\|T_{k_{n+1}}x\|\ge n$ for all $n\ge 2$. This contradicts the hypothesis that the sequence $(T_nx)$ is $\mathcal{J}$-bounded. 
%\textcolor{red}{Complete proof}
\end{proof}

\begin{thm}\label{thm:ctoell(X)}
Let $A=(A_{n,k})$ be a matrix of linear operators in $\mathcal{L}(X,Y)$. 
Let $\mathcal{J}$ be a strongly selective ideal on $\omega$. 
Then $A \in (c(X), \ell_\infty(Y, \mathcal{J}))$ only if 
% \ref{item:T1} holds for some $k_0 \in \omega$. 
there exists $k_0 \in \omega$ which satisfies \ref{item:T1flat}. 
%\begin{enumerate}[label={\rm (\textsc{T}\arabic{*}$^\flat$)}]
%\setcounter{enumi}{0}
%\item \label{item:T1flat} There exists $J_0 \in \mathcal{J}^\star$, with $J_0=\omega$ if $\mathcal{J}=\mathrm{Fin}$, for which $\sup_{n \in J_0}\|A_{n,\ge k_0}\|<\infty$ and, for all $n \in \omega\setminus J_0$, there exists $f(n) \in \omega$ such that $\|A_{n,\ge f(n)}\|<\infty$.
%
%$\sup_{n \in J_0}\|A_{n,\ge k_0}\|<\infty$ for some $J_0 \in \mathcal{J}^\star$ and, for all $n \notin J_0$, there exists $f(n) \in \omega$ with $\|A_{n,\ge k_n}\|<\infty$ for all $n \notin J_0$, with $J_0=\omega$ if $\mathcal{J}=\mathrm{Fin}$.
%\end{enumerate}

In addition, if each $A_{n,k}$ is bounded, it is possible to choose $k_0=f(n)=0$ for all 
%$n\notin J_0$. 
$n \in \omega\setminus J_0$. 
\end{thm}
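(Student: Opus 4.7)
The plan is to package the rows of $A$ as a sequence of linear operators from $c(X)$ to $Y$ and then combine the Lorentz--Macphail theorem (Theorem~\ref{thm:lorentzmacphail}) with the ideal Banach--Steinhaus theorem (Theorem~\ref{thm:uniformJboundedness}). Define $T_n \colon c(X)\to Y$ by $T_n\bm{x} := \sum_k A_{n,k}x_k$; the hypothesis ensures each $T_n$ is well defined and that $(T_n\bm{x})_n \in \ell_\infty(Y,\mathcal{J})$ for every $\bm{x} \in c(X)$. By Lemma~\ref{lem:convergenceoperatorbetadualc(X)} applied to each row, there exists $k(n) \in \omega$ with $\|A_{n,\ge k(n)}\| < \infty$. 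Set $M_k := \{\bm{x} \in c(X) : x_j = 0 \text{ for all } j < k\}$; this is a decreasing family of closed subspaces of $c(X)$, and comparing finite sums on unit-ball coordinates with partial sums of convergent sequences in $M_k$ gives $\|T_n|_{M_k}\| = \|A_{n,\ge k}\|$ whenever either side is finite.

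To invoke Theorem~\ref{thm:lorentzmacphail} one needs a decreasing chain $(\tilde{M}_n)$ of closed subspaces such that $T_n$ is bounded on $\tilde{M}_n$. The indices $k(n)$ need not be monotone, so I set $\tilde{M}_n := M_{\max\{k(0),\ldots,k(n)\}}$; these are decreasing and each $T_n$ is bounded on $\tilde{M}_n$ since $\tilde{M}_n \subseteq M_{k(n)}$. Because $\mathcal{J}$ is strongly selective, Theorem~\ref{thm:lorentzmacphail} then yields $n_0 \in \omega$ and $J^\star \in \mathcal{J}^\star$ such that $T_n$ is bounded on $M_{k_0}$ for every $n \in J^\star$, where $k_0 := \max\{k(0),\ldots,k(n_0)\}$.

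Next I upgrade this per-point boundedness to a uniform $\mathcal{J}$-bound using Theorem~\ref{thm:uniformJboundedness}. Define bounded operators $\hat{T}_n \colon M_{k_0} \to Y$ by $\hat{T}_n := T_n|_{M_{k_0}}$ if $n \in J^\star$ and $\hat{T}_n := 0$ otherwise. Since $\omega \setminus J^\star \in \mathcal{J}$, the $\mathcal{J}$-limsup ignores the modification and
$$
\mathcal{J}\text{-}\limsup\nolimits_n \|\hat{T}_n\bm{x}\| = \mathcal{J}\text{-}\limsup\nolimits_n \|T_n\bm{x}\| < \infty
$$
for every $\bm{x} \in M_{k_0}$. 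Strong selectivity implies rapid$^+$ $P^+$, so Theorem~\ref{thm:uniformJboundedness} gives $\mathcal{J}\text{-}\limsup_n \|\hat{T}_n\| < \infty$, hence some $J_0 \subseteq J^\star$ in $\mathcal{J}^\star$ with $\sup_{n \in J_0}\|A_{n,\ge k_0}\| < \infty$; for $n \notin J_0$ one takes $f(n) := k(n)$, proving \ref{item:T1flat}. In the bounded case, the extra clause of Lemma~\ref{lem:convergenceoperatorbetadualc(X)} permits $k(n) = 0$ for all $n$, so Theorem~\ref{thm:lorentzmacphail} is bypassed and Theorem~\ref{thm:uniformJboundedness} is applied directly on $c(X) = M_0$, yielding $k_0 = 0$ and $f(n) = 0$ for $n \notin J_0$.

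The main technical wrinkle is the mismatch between the hypothesis of Theorem~\ref{thm:lorentzmacphail} (which wants a diagonal matching between $n$ and $M_n$) and the per-row output of Lemma~\ref{lem:convergenceoperatorbetadualc(X)} (which only furnishes some $k(n)$ with no monotonicity); replacing $k(n)$ by the running maximum is the clean fix. A secondary point is that Theorem~\ref{thm:uniformJboundedness} requires genuinely bounded operators on the whole ambient space $M_{k_0}$, which is why the auxiliary $\hat{T}_n$ is zeroed out on $\omega\setminus J^\star$ — legitimate precisely because this set lies in $\mathcal{J}$.
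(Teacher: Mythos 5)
Your proposal is correct and follows essentially the same route as the paper: apply Lemma \ref{lem:convergenceoperatorbetadualc(X)} row by row, monotonize the tail indices (the paper phrases your running-maximum trick as "without loss of generality $(f(n))$ is weakly increasing"), invoke Theorem \ref{thm:lorentzmacphail} on the decreasing chain of subspaces of $c(X)$, and finish with Theorem \ref{thm:uniformJboundedness}. Your explicit zeroing of the operators outside $J^\star$ before applying the ideal Banach--Steinhaus theorem is a slightly more careful rendering of a step the paper leaves implicit, but the argument is the same.
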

\begin{proof}
%By condition \ref{item:N2} of 
Thanks to 
Lemma \ref{lem:convergenceoperatorbetadualc(X)}, for each $n \in\omega$, there exists an integer $f(n) \in \omega$ such that $\|A_{n,\ge f(n)}\|<\infty$. 
Without loss of generality, we can suppose that the sequence $(f(n): n \in \omega)$ is weakly increasing. 
Now, for each $n \in \omega$, define 
$$
M_n:=\{\bm{x} \in c(X): x_k=0 \text{ for all }k< f(n)\}.
$$
Then $(M_n)$ is a decreasing sequence of closed linear subspaces of the Banach space $c(X)$. 
In addition, for each $n \in \omega$, the linear operator $A_n: c(X)\to Y$ is well defined. 
Thanks to the Banach--Steinhaus theorem, $A_n$ is bounded on $M_n$ for all $n \in \omega$.  

At this point, it follows by Theorem \ref{thm:lorentzmacphail} that there exist $n_0\in \omega$ and $J^\star \in \mathcal{J}^\star$ such that $A_n$ is bounded on $M_{n_0}$ for all $n \in J^\star$. 
%Again by the Banach--Steinhaus theorem, we obtain
Thanks to Theorem \ref{thm:uniformJboundedness}, we obtain that $\mathcal{J}\text{-}\limsup_n \|A_n\upharpoonright M_{n_0}\|<\infty$, i.e., there exist a constant $\kappa >0$ and $J_0 \in \mathcal{J}^\star$, with $J_0=\omega$ if $\mathcal{J}=\mathrm{Fin}$, such that $\|A_n\bm{x}\|\le \kappa \|\bm{x}\|$ for all $\bm{x} \in M_{n_0}$ and $n \in J_0$. 
To complete the proof, observe that %fix $n \in J_0$. Then 
$$
\forall n \in J_0, 
\forall \bm{x} \in c_{00}(X) \cap M_{n_0}, \quad 
\|A_n\bm{x}\|=\left\|\sum\nolimits_{k\ge f(n_0)}A_{n,k}x_k\right\| \le \kappa \|\bm{x}\|, 
$$
% and pick a sequence $\bm{x} \in c_{00}(X) \cap M_{n_0}$. 
%Then 
%$\|A_n\bm{x}\|=\|\sum_{k\ge f(n_0)}A_{n,k}x_k\| \le \kappa \|\bm{x}\|$, 
which implies that $\|A_{n,\ge f(n_0)}\|\le \kappa$. 
Since the upper bound is independent of $n\in J_0$, the claim follows by setting $k_0:=f(n_0)$. 

The second part is clear once we observe that it is possible to choose $f(n)=0$ for all $n$.  
\end{proof}

%\textcolor{red}{Note that \ref{item:T1flat} collapses to \ref{item:T1} if $\mathcal{J}=\mathrm{Fin}$. However, we need both conditions because we will need the case $\mathcal{J}\neq \mathrm{Fin}$ in the above only in the unbounded case.

In the following results we will need the following weakening of \ref{item:T2}, namely, 
\begin{enumerate}[label={\rm (\textsc{T}\arabic{*}$^\flat$)}]
\setcounter{enumi}{1}
\item \label{item:T2flat} $\mathcal{J}\text{-}\limsup_n\|A_{n,k}x\|<\infty$ for all $x \in X$ and $k<k_0$\textup{.}
\end{enumerate}
It is clear that \ref{item:T5} implies \ref{item:T2flat}, which is the reason why it does not appear in Theorem \ref{main:IJREGULARboundedUnbounded}, cf. Remark \ref{rmk:conditionT5}.

%\textcolor{red}{Sostituire tutti gli T1flat, Check from here later}

%Lemma \ref{thm:maddoxmain} is implicit in \cite[Chapter 4]{MR568707}.\textcolor{red}{NO!}
\begin{thm}\label{thm:maddoxmain}
Let $A=(A_{n,k})$ be a matrix of linear operators in $\mathcal{L}(X,Y)$. 
Also, let $\mathcal{J}$ be a strongly selective ideal on $\omega$. 
Then $A \in  (\ell_\infty(X), \ell_\infty(Y, \mathcal{J}))$ 
%if and only if there exists $k_0 \in \omega$ such that 
%\ref{item:T1}, \ref{item:T2}, and \ref{item:T3natural} hold. 
if and only if there exists $k_0 \in \omega$ such that 
\ref{item:T1flat}, \ref{item:T2flat}, and \ref{item:T3natural} hold. 

In addition, if each $A_{n,k}$ is bounded, it is possible to choose $k_0=0$. 
\end{thm}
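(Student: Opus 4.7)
The plan is to split into the two implications and to exploit two tools already available in the paper: Theorem \ref{thm:ctoell(X)}, which delivers \ref{item:T1flat} whenever $A$ maps $c(X)$ into $\ell_\infty(Y,\mathcal{J})$, and Lemma \ref{lem:convergenceoperator}, which applied row-wise characterizes the norm convergence of $\sum_k A_{n,k}x_k$ on $\ell_\infty(X)$ via the group-norm conditions \ref{item:N2}--\ref{item:N3}.

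\emph{Necessity.} Suppose $A\in(\ell_\infty(X),\ell_\infty(Y,\mathcal{J}))$. Since $c(X)\subseteq \ell_\infty(X)$, we have $A\in(c(X),\ell_\infty(Y,\mathcal{J}))$, so Theorem \ref{thm:ctoell(X)} produces some $k_0\in\omega$ and $J_0\in\mathcal{J}^\star$ witnessing \ref{item:T1flat}. For \ref{item:T2flat}, given $x\in X$ and $k<k_0$, apply $A$ to the bounded sequence taking value $x$ at coordinate $k$ and $0$ elsewhere; this yields $(A_{n,k}x)_n\in\ell_\infty(Y,\mathcal{J})$, i.e.\ \ref{item:T2flat}. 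For \ref{item:T3natural}, fix $n\in\omega$: by hypothesis $\sum_kA_{n,k}x_k$ is norm convergent for every $\bm{x}\in\ell_\infty(X)$, which is exactly the hypothesis of Lemma \ref{lem:convergenceoperator} for the row $(A_{n,k})_k$, so $\lim_k\|A_{n,\ge k}\|=0$.

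\emph{Sufficiency.} Assume \ref{item:T1flat}, \ref{item:T2flat}, \ref{item:T3natural} with witnesses $k_0$, $J_0$, and set $M:=\sup_{n\in J_0}\|A_{n,\ge k_0}\|$. For each $n$, condition \ref{item:T3natural} implies the assumptions \ref{item:N2}--\ref{item:N3} of Lemma \ref{lem:convergenceoperator} for the row $(A_{n,k})_k$, hence $A_n\bm{x}$ is a well-defined element of $Y$ for every $\bm{x}\in\ell_\infty(X)$. To obtain $\mathcal{J}$-boundedness, fix $\bm{x}\in\ell_\infty(X)$ and split
\[
A_n\bm{x}=\sum_{k<k_0}A_{n,k}x_k+\sum_{k\ge k_0}A_{n,k}x_k.
\]
The tail is controlled uniformly on $J_0$ since $\bigl\|\sum_{k\ge k_0}A_{n,k}x_k\bigr\|\le\|A_{n,\ge k_0}\|\,\|\bm{x}\|\le M\|\bm{x}\|$. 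For each of the finitely many $k<k_0$, condition \ref{item:T2flat} applied to $x_k$ produces $J_k\in\mathcal{J}^\star$ and a finite $M_k$ with $\|A_{n,k}x_k\|\le M_k$ for all $n\in J_k$. Then $J^\ast:=J_0\cap\bigcap_{k<k_0}J_k\in\mathcal{J}^\star$, and for $n\in J^\ast$ one has $\|A_n\bm{x}\|\le M\|\bm{x}\|+\sum_{k<k_0}M_k<\infty$, so $\mathcal{J}\text{-}\limsup_n\|A_n\bm{x}\|<\infty$, as required. The final clause follows because, if each $A_{n,k}$ is bounded, both Theorem \ref{thm:ctoell(X)} and Lemma \ref{lem:convergenceoperator} allow the choice $k_0=0$, which renders \ref{item:T2flat} vacuous.

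The only genuinely nontrivial step is the forward direction of \ref{item:T1flat}; this is where the strongly selective assumption on $\mathcal{J}$ enters, but it is already packaged in Theorem \ref{thm:ctoell(X)} via the Lorentz--Macphail lemma and the ideal Banach--Steinhaus theorem. All remaining work is routine bookkeeping of $\mathcal{J}^\star$-sets plus one application of Lemma \ref{lem:convergenceoperator}.
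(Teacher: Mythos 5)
Your proposal is correct and follows essentially the same route as the paper: Theorem \ref{thm:ctoell(X)} for the necessity of \ref{item:T1flat}, the coordinate-supported test sequence for \ref{item:T2flat}, Lemma \ref{lem:convergenceoperator} for \ref{item:T3natural} and for row-wise well-definedness, and the same $k<k_0$ versus $k\ge k_0$ splitting with an intersection of $\mathcal{J}^\star$-sets for the sufficiency. The only cosmetic difference is that the paper bounds all columns $k<k_0$ by a single pair $(J_1,\kappa)$ where you take $J^\ast=J_0\cap\bigcap_{k<k_0}J_k$, which is the same argument.
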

\begin{proof}
\textsc{If part.} Fix $\bm{x}\in \ell_\infty(X)$. 
Thanks to Lemma \ref{lem:convergenceoperator}, for each $n$, the sum $A_n\bm{x}=\sum_kA_{n,k}x_k$ is convergent in the norm of $Y$, hence $A\bm{x}$ is well defined. 
It follows by \ref{item:T2flat} that there exist $J_1 \in \mathcal{J}^\star$ and $\kappa>0$ such that $\|A_{n,k}x_k\|\le \kappa$ for all $k<k_0$ and $n \in J_1$. 
Hence
\begin{equation}\label{eq:boundedness}
%\forall n \in J_0, \quad 
\left\|A_n\bm{x}\right\| 
\le \sum\nolimits_{k<k_0}\left\|A_{n,k}x_k\right\|+\left\|\sum\nolimits_{k\ge k_0}A_{n,k}x_k\right\|
\le \kappa k_0+\|\bm{x}\|\sup\nolimits_{t\in J_0} \|A_{t, \ge k_0}\|
\end{equation}
for all $n \in J_0\cap J_1 \in \mathcal{J}^\star$, which proves that $A\bm{x} \in \ell_\infty(Y, \mathcal{J})$. 

\medskip

\textsc{Only If part.} %Suppose that $A \in  (\ell_\infty(X), \ell_\infty(Y))$. 
The necessity of \ref{item:T1flat} follows by Theorem \ref{thm:ctoell(X)}, with $k_0=0$ if each $A_{n,k}$ is bounded. 
Now, if \ref{item:T2flat} does not hold, there would exist $x \in X$ and $k<k_0$ such that $\mathcal{J}\text{-}\limsup_n\|A_{n,k}x\|=\infty$. This contradicts the hypothesis that  $A \in  (\ell_\infty(X), \ell_\infty(Y,\mathcal{J}))$ by choosing $\bm{x} \in \ell_\infty(X)$ such that $x_t=x$ if $t=k$ and $x_t=0$ otherwise. 
Lastly, the necessity of \ref{item:T3natural}  follows by Lemma \ref{lem:convergenceoperator}.  
\end{proof}
%This follows from \cite[Theorem 1]{MR0447877}; cf. also \cite[Proposition 3.3 and Theorem 4.7]{MR568707}. 

%It is worth to remark that, in the special case where all linear operators $A_{n,k}$ are assumed to be bounded, it follows by the proof of \cite[Theorem 4.2]{MR568707} that we can choose $k_0=0$ in the statement of Theorem \ref{thm:maddoxmain}, so that conditions 
%%\ref{item:L1} and 
%\ref{item:T2} is void. %automatically satisfied. 
\begin{rmk}\label{rmk:conditionL1tooweak}
The same example given in Remark \ref{rmk:regulardoesnotimplylinftylinfty} proves that, even if $X=Y$, $\mathcal{J}=\mathrm{Fin}$, and each $A_{n,k}$ is bounded (so that \ref{item:T2flat} is void), condition \ref{item:T1flat} (as its stronger version \ref{item:T1}) is not sufficient to characterize the matrix class $(\ell_\infty(X), \ell_\infty(Y,\mathcal{J}))$; cf. Corollary \ref{cor:finitedimensionallinfty} below for the finite dimensional case. 
\end{rmk}

\begin{thm}\label{thm:maddoxmaincXellinfty}
Let $A=(A_{n,k})$ be a matrix of linear operators in $\mathcal{L}(X,Y)$. 
Also, let $\mathcal{J}$ be a strongly selective ideal on $\omega$. 
Then $A \in  (c(X), \ell_\infty(Y,\mathcal{J}))$ if and only if there exists $k_0 \in \omega$ such that \ref{item:T1flat} and \ref{item:T2flat} hold, together with\textup{:}
\begin{enumerate}[label={\rm (\textsc{T}\arabic{*}$^\flat$)}]
\setcounter{enumi}{3}
\item \label{item:T4flat} $\sum_{k}A_{n,k}$ convergences in the strong operator topology for all $n$\textup{.}
\end{enumerate}

In addition, if each $A_{n,k}$ is bounded, it is possible to choose $k_0=0$. %\textcolor{red}{[Check]}
\end{thm}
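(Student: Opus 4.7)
The plan is to mirror closely the proof of Theorem \ref{thm:maddoxmain}, substituting Lemma \ref{lem:convergenceoperatorbetadualc(X)} (characterizing the $\beta$-dual of $c(X)$) for Lemma \ref{lem:convergenceoperator} (characterizing the $\beta$-dual of $\ell_\infty(X)$). The key new ingredient is \ref{item:T4flat}, which replaces \ref{item:T3natural}: on $c(X)$ the tails $\|T_{\ge k}\|$ no longer need to vanish, but the row sums must converge in the strong operator topology. The crucial observation making the sufficient direction work is that $c(X)\subseteq \ell_\infty(X)$, so the same two-piece decomposition as in \eqref{eq:boundedness} applies.

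\textbf{Sufficient direction.} I would fix $\bm{x}\in c(X)$ and argue first that $A\bm{x}$ is well defined. For each fixed $n\in\omega$, applying Lemma \ref{lem:convergenceoperatorbetadualc(X)} to the sequence $(A_{n,k})_k$: hypothesis \ref{item:T1flat} supplies $k_0$ or $f(n)$ with $\|A_{n,\ge k_0}\|<\infty$ or $\|A_{n,\ge f(n)}\|<\infty$, which is condition \ref{item:N2}; and \ref{item:T4flat} is precisely \ref{item:N2prime}. Hence $A_n\bm{x}$ is norm-convergent. To see $A\bm{x}\in \ell_\infty(Y,\mathcal{J})$, note that $\bm{x}$, being convergent, is bounded, and repeat the estimate
\[
\|A_n\bm{x}\|\le \sum_{k<k_0}\|A_{n,k}x_k\|+\Bigl\|\sum_{k\ge k_0}A_{n,k}x_k\Bigr\|\le \kappa k_0+\|\bm{x}\|\sup_{t\in J_0}\|A_{t,\ge k_0}\|,
\]
valid on $J_0\cap J_1\in\mathcal{J}^\star$, where $J_0$ comes from \ref{item:T1flat} and $J_1\in\mathcal{J}^\star$ together with the constant $\kappa>0$ are obtained by intersecting the finitely many sets provided by \ref{item:T2flat} applied to the vectors $x_0,\ldots,x_{k_0-1}$ (using that finite intersections of $\mathcal{J}^\star$-sets stay in $\mathcal{J}^\star$).

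\textbf{Necessary direction.} The necessity of \ref{item:T1flat} follows directly from Theorem \ref{thm:ctoell(X)} (with the refinement $k_0=f(n)=0$ when each $A_{n,k}$ is bounded). For \ref{item:T4flat}, fix $n\in\omega$: since $A_n$ must be defined on all of $c(X)$, another application of Lemma \ref{lem:convergenceoperatorbetadualc(X)} to the $n$-th row forces $\sum_k A_{n,k}$ to converge in the strong operator topology. Finally, for \ref{item:T2flat}, fix $x\in X$ and $k<k_0$ and take the sequence $\bm{x}$ with $x$ in position $k$ and zero elsewhere; this lies in $c_{00}(X)\subseteq c(X)$, so $A\bm{x}=(A_{n,k}x)_n$ must belong to $\ell_\infty(Y,\mathcal{J})$, giving $\mathcal{J}\text{-}\limsup_n\|A_{n,k}x\|<\infty$.

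The only non-routine point is making sure the $\mathcal{J}^\star$-sets from \ref{item:T1flat} and from the $k_0$-many instances of \ref{item:T2flat} can be combined into a single set in $\mathcal{J}^\star$, which is immediate since $\mathcal{J}$ is an ideal; once this is observed, the argument goes through exactly as in Theorem \ref{thm:maddoxmain}. The closing remark that $k_0=0$ is admissible when every $A_{n,k}$ is bounded follows from the analogous refinement in Theorem \ref{thm:ctoell(X)} together with the fact that \ref{item:T2flat} becomes vacuous for $k_0=0$.
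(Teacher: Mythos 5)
Your proposal is correct and follows exactly the route the paper intends: the paper's own proof is the one-line remark that the argument of Theorem \ref{thm:maddoxmain} goes through verbatim with Lemma \ref{lem:convergenceoperator} replaced by Lemma \ref{lem:convergenceoperatorbetadualc(X)}, which is precisely what you carry out (the estimate \eqref{eq:boundedness} for $\mathcal{J}$-boundedness, Theorem \ref{thm:ctoell(X)} for the necessity of \ref{item:T1flat}, unit-position sequences for \ref{item:T2flat}, and the $\beta$-dual lemma for \ref{item:T4flat}). No gaps.
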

\begin{proof}
The proof goes as in Theorem \ref{thm:maddoxmain}, replacing Lemma \ref{lem:convergenceoperator} with Lemma \ref{lem:convergenceoperatorbetadualc(X)}.
\end{proof}

However, if $X$ is finite dimensional, it is possible to simplify the equivalences in Theorem \ref{thm:maddoxmain} and Theorem \ref{thm:maddoxmaincXellinfty}, namely, condition \ref{item:T1flat} is necessary and sufficient in both cases choosing $k_0=0$. 
\begin{cor}\label{cor:finitedimensionallinfty}
Let $A=(A_{n,k})$ be a matrix of linear operators in $\mathcal{L}(X,Y)$ and assume, in addition, that $X$ is finite dimensional. 
Also, let $\mathcal{J}$ be a strongly selective ideal on $\omega$. 

Then the following are equivalent\textup{:}
\begin{enumerate}[label={\rm (\roman*)}]
\item \label{item:1corollaryfinitedimellinfty} $A \in (\ell_\infty(X), \ell_\infty(Y,\mathcal{J}))$\textup{;}
\item \label{item:2corollaryfinitedimellinfty} $A \in (c(X), \ell_\infty(Y,\mathcal{J}))$\textup{;}
\item \label{item:3corollaryfinitedimellinfty} \ref{item:T1flat} holds with $k_0=0$\textup{.}
%$\sup_n\|A_{n,\omega}\|<\infty$\textup{.}
\end{enumerate}
\end{cor}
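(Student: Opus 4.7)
The plan is to close the cycle of implications (i) $\Rightarrow$ (ii) $\Rightarrow$ (iii) $\Rightarrow$ (i), reducing each step to the two preceding characterizations (Theorem \ref{thm:maddoxmain} and Theorem \ref{thm:maddoxmaincXellinfty}) together with the finite-dimensional equivalences in Corollary \ref{cor:finitedimensionalVSextremepoints} and, if needed, Lemma \ref{lem:convergenceoperator}. The common thread is that, since $X$ is finite dimensional, every $A_{n,k}$ is automatically bounded, so the ``if each $A_{n,k}$ is bounded, it is possible to choose $k_0=0$'' addenda of both theorems become available.

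The implication (i) $\Rightarrow$ (ii) is trivial from the inclusion $c(X)\subseteq\ell_\infty(X)$. For (ii) $\Rightarrow$ (iii), I would invoke Theorem \ref{thm:maddoxmaincXellinfty}: because each $A_{n,k}$ is bounded, its conclusion directly delivers \ref{item:T1flat} with $k_0=0$ (the other properties \ref{item:T2flat} and \ref{item:T4flat} are produced as well but are not needed at this point).

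The substantive step is (iii) $\Rightarrow$ (i). Assuming \ref{item:T1flat} with $k_0=0$, I would verify the three hypotheses of Theorem \ref{thm:maddoxmain} with $k_0=0$. Condition \ref{item:T2flat} is vacuous since no index $k<0$ exists, and \ref{item:T1flat} is given by assumption. The real task is to establish \ref{item:T3natural}, i.e.\ $\lim_k\|A_{n,\ge k}\|=0$ for every $n\in\omega$. For this I would fix $n$ and observe that \ref{item:T1flat} together with the boundedness of each individual $A_{n,k}$ gives $\|A_{n,\ge k_n^\star}\|<\infty$ for some $k_n^\star\in\omega$ (namely $k_n^\star=0$ if $n\in J_0$, and $k_n^\star=f(n)$ otherwise, plus the finite contribution of the first $k_n^\star$ bounded operators if required). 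By Lemma \ref{lem:convergenceoperator} this yields norm convergence of $\sum_kA_{n,k}x_k$ for every $\bm{x}\in\ell_\infty(X)$; then the equivalence \ref{item:1corollaryfinitedim}$\Leftrightarrow$\ref{item:3corollaryfinitedim} in Corollary \ref{cor:finitedimensionalVSextremepoints} (in tandem with the forward direction of Lemma \ref{lem:convergenceoperator}) forces $\lim_k\|A_{n,\ge k}\|=0$, as desired.

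The main obstacle is the row-by-row tail-decay \ref{item:T3natural}, because \ref{item:T1flat} is a statement \emph{uniform in $n$ only on a set in $\mathcal{J}^\star$}, whereas \ref{item:T3natural} must hold for every single row. Finite dimensionality is exactly what rescues the argument: in a finite-dimensional domain the finite initial segment of bounded operators $A_{n,0},\dots,A_{n,k_n^\star-1}$ contributes a finite amount to the group norm, and the collapse of the three clauses in Corollary \ref{cor:finitedimensionalVSextremepoints} (which fails for infinite-dimensional $X$, as recorded in Remark \ref{rmk:evilgroupnorm} and Remark \ref{rmk:conditionL1tooweak}) upgrades mere finiteness of a tail group norm to its decay to zero. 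Once \ref{item:T3natural} is in hand, Theorem \ref{thm:maddoxmain} closes the cycle and gives $A\in(\ell_\infty(X),\ell_\infty(Y,\mathcal{J}))$.
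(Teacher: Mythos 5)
Your proposal is correct and matches the paper's proof, which likewise runs the cycle (i) $\Rightarrow$ (ii) $\Rightarrow$ (iii) $\Rightarrow$ (i) using Theorem \ref{thm:ctoell(X)} for the middle implication (your detour through Theorem \ref{thm:maddoxmaincXellinfty} reduces to the same thing) and Lemma \ref{lem:convergenceoperator}, Corollary \ref{cor:finitedimensionalVSextremepoints}, Theorem \ref{thm:maddoxmain}, plus boundedness of each $A_{n,k}$ for the last one. One small reordering in your (iii) $\Rightarrow$ (i) step: the norm convergence of $\sum_k A_{n,k}x_k$ for $\bm{x}\in\ell_\infty(X)$ should be deduced from $\|A_{n,\omega}\|<\infty$ via Corollary \ref{cor:finitedimensionalVSextremepoints} (not from Lemma \ref{lem:convergenceoperator}, whose sufficiency direction also needs \ref{item:N3}), after which the necessity direction of Lemma \ref{lem:convergenceoperator} delivers \ref{item:T3natural} exactly as you conclude.
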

\begin{proof}
\ref{item:1corollaryfinitedimellinfty} $\implies$ \ref{item:2corollaryfinitedimellinfty} is clear, 
\ref{item:2corollaryfinitedimellinfty} $\implies$ \ref{item:3corollaryfinitedimellinfty} follows by Theorem \ref{thm:ctoell(X)}, and 
\ref{item:3corollaryfinitedimellinfty} $\implies$ \ref{item:1corollaryfinitedimellinfty} follows by Lemma \ref{lem:convergenceoperator}, Corollary \ref{cor:finitedimensionalVSextremepoints}, Theorem \ref{thm:maddoxmain}, and the fact that each $A_{n,k}$ is bounded. 
\end{proof}

\begin{lem}\label{lemma:unboundedcase}
Let $(T_k)$ be a sequence of nonzero linear operators in $\mathcal{L}(X,Y)$. Then there exists a sequence $\bm{x} \in X^\omega$ such that $\sum_kT_kx_k$ is not convergent in the norm of $Y$. 
\end{lem}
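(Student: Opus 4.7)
The plan is very short: the statement boils down to the elementary fact that if the terms of a series in a Banach space do not tend to zero, then the series cannot be norm convergent, so I just need to build an $\bm{x}$ for which $T_k x_k \not\to 0$.

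First, I would use that each $T_k$ is nonzero to pick, for every $k \in \omega$, some $y_k \in X$ with $T_k y_k \neq 0$. Then I would rescale by setting
\[
x_k := \frac{y_k}{\|T_k y_k\|},
\]
so that $\|T_k x_k\| = 1$ for all $k \in \omega$ by linearity of $T_k$. Consequently $T_k x_k$ does not converge to $0$ in the norm of $Y$.

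Finally, I would invoke the standard fact that if $\sum_k a_k$ converges in the norm of a Banach space, then its general term $a_k$ must go to $0$ (since $a_k$ equals the difference of two consecutive partial sums of a Cauchy sequence). Applying this to $a_k := T_k x_k$ yields that $\sum_k T_k x_k$ cannot be norm convergent in $Y$, which concludes the proof.

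There is no real obstacle: the only thing to notice is that nonzero-ness of each $T_k$ is exactly what allows the one-step normalization; no appeal to $\|T_k\|$ (which might be infinite, since $T_k$ is only assumed to lie in $\mathcal{L}(X,Y)$, not $\mathcal{B}(X,Y)$) is needed.
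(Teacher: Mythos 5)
Your proof is correct, and it takes a genuinely simpler route than the paper's. You normalize each term so that $\|T_k x_k\|=1$ and invoke the fact that the general term of a norm-convergent series must tend to $0$; this is airtight, and you are right that only the nonvanishing of $T_k y_k$ (not finiteness of $\|T_k\|$) is needed to rescale. The paper instead builds $\bm{x}$ recursively, choosing $x_n:=\kappa_n y_n$ with $\kappa_n:=\bigl(n+\|\sum_{k\le n-1}T_kx_k\|\bigr)/\|T_ny_n\|$, so that the partial sums themselves satisfy $\|\sum_{k\le n}T_kx_k\|\ge n$. That construction proves something slightly stronger --- the partial sums are unbounded, not merely non-Cauchy --- and, more to the point, the same ``make the $n$th partial sum exceed $n$'' device is recycled later (in the proof of Theorem \ref{thm:Xomegaellinfty}) to produce a sequence with $\|A_n\bm{x}\|\ge n$, where mere failure of the terms to vanish would not suffice. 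For the lemma as stated, though, your argument is complete and arguably cleaner.
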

\begin{proof}
For each $k\in \omega$, pick $y_k \in X$ such that $T_ky_k\neq 0$. Now define the sequence $\bm{x}$ recursively as it follows: $x_0:=y_0$ and, if $x_0,\ldots,x_{n-1}$ are defined for some $n\ge 1$, then $x_n:=\kappa_n y_n$, where $\kappa_n:=(n+\|\sum_{k\le n-1}T_kx_k\|)/\|T_ny_n\|$. Indeed, it follows that 
$$
\left\|\sum\nolimits_{k\le n}T_kx_k\right\|\ge \|T_nx_n\|-\left\|\sum\nolimits_{k\le n-1}T_kx_k\right\|=n
$$
for all $n\ge 1$, completing the proof.  
\end{proof}

%\textcolor{red}{Improvement of the latter if I is tall} 
\begin{lem}\label{lem:Itall}
Let $\mathcal{I}$ be an ideal on $\omega$ which is tall. Then there exists a partition $\{E_n: n \in \omega\}$ of $\omega$ such that $E_n \in \mathcal{I}\cap \mathrm{Fin}^+$ for all $n$. 
\end{lem}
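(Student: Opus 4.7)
The plan is to build the partition recursively, invoking tallness of $\mathcal{I}$ at each stage on a suitably shrunk residual set. At stage $n$, I will produce one infinite piece $E_n \in \mathcal{I}$ which covers the integer $n$ (if it hasn't been covered yet), while reserving enough material to keep the remaining pool infinite.

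Maintain the invariant that $R_n := \omega \setminus \bigcup_{i<n} E_i$ is infinite (with $R_0 = \omega$). At stage $n$, since $R_n \setminus \{n\}$ is still infinite, tallness of $\mathcal{I}$ produces an infinite set $A_n \subseteq R_n \setminus \{n\}$ with $A_n \in \mathcal{I}$. Split $A_n = A_n' \sqcup A_n''$ into two infinite pieces (for example, via the even- and odd-indexed elements of its canonical enumeration); both lie in $\mathcal{I}$ by downward closure. Set
$$
E_n := A_n' \cup (\{n\} \cap R_n).
$$
Then $E_n$ is infinite (so $E_n \in \mathrm{Fin}^+$) and belongs to $\mathcal{I}$, because $\mathcal{I}$ contains $\mathrm{Fin}$ and is closed under finite unions. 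Moreover $E_n \subseteq R_n$, so $E_n$ is disjoint from every earlier $E_i$, and $R_{n+1} = R_n \setminus E_n \supseteq A_n''$ remains infinite, so the recursion continues.

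It remains to check that $\bigcup_n E_n = \omega$: given $k \in \omega$, either $k \in E_i$ for some $i < k$, or else $k \in R_k$, in which case the construction puts $k$ into $E_k$. Hence $\{E_n : n \in \omega\}$ is the desired partition.

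There is no real obstacle here beyond being careful with bookkeeping — tallness gives the infinite $\mathcal{I}$-subset at each step, splitting it in two guarantees both that $E_n$ is large enough and that the residual pool survives, and attaching the singleton $\{n\}$ when necessary enforces coverage.
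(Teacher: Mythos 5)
Your proof is correct and follows essentially the same recursive construction as the paper: apply tallness to the residual set at each stage and adjoin $\{n\}$ when needed to guarantee coverage. The only (immaterial) difference is that you keep the residual pool infinite by explicitly reserving the half $A_n''$, whereas the paper simply observes that the residual $\omega\setminus\bigcup_{i<n}E_i$ lies in $\mathcal{I}^\star\subseteq\mathcal{I}^+$ and is therefore automatically infinite since $\mathrm{Fin}\subseteq\mathcal{I}$.
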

\begin{proof}
We define such partition recursively, with the property that $\{0,\ldots,n\}\cup E_0\cup \cdots E_n$ for all $n$. 
Since $\mathcal{I}$ is tall there exists an infinite set $S_0 \in \mathcal{I}$, and set $E_0:=S_0\cup \{0\}$. Now, suppose that $E_0,\ldots,E_{n-1}$ have been defined, for some $n\ge 1$, so that the set $G_n:=\omega\setminus \bigcup_{i<n}E_i$ belongs to $\mathcal{I}^\star$. Since $\mathcal{I}$ is tall there exists an infinite set $S_n \in \mathcal{I}$ contained in $G_n$. The claim follows defining $E_n:=S_n$ if $n \notin G_n$ and $E_n:=S_n\cup \{n\}$ otherwise. 
\end{proof}

\begin{lem}\label{lem:dualc00I}
Let $(T_k)$ be a sequence of linear operators in $\mathcal{L}(X,Y)$. Let also $\mathcal{I}$ be a tall ideal on $\omega$.  Then $\sum_kT_kx_k$ is convergent in the norm of $Y$ for all $\bm{x} \in c_{00}(X,\mathcal{I})$ if and only if 
$\{k \in \omega: T_k \neq 0\}$ is finite. 
%there are finitely many nonzero $T_k$.
\end{lem}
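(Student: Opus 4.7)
The plan is to handle the two implications separately, with the nontrivial content concentrated in the forward direction via Lemma~\ref{lemma:unboundedcase} and tallness of $\mathcal{I}$.

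The ``if'' direction is essentially trivial: if $F:=\{k \in \omega: T_k\neq 0\}$ is finite, then for every sequence $\bm{x} \in X^\omega$ (not just those in $c_{00}(X,\mathcal{I})$) we have $\sum_k T_kx_k=\sum_{k \in F}T_kx_k$, which is a finite sum and hence trivially norm convergent in $Y$.

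For the ``only if'' direction, I argue by contrapositive. Suppose $F=\{k \in \omega: T_k\neq 0\}$ is infinite. Since $\mathcal{I}$ is tall, there exists an infinite subset $S^\prime \subseteq F$ with $S^\prime \in \mathcal{I}$; let $(k_n: n \in \omega)$ be its increasing enumeration. By construction each operator $T_{k_n}$ is nonzero, so Lemma \ref{lemma:unboundedcase} applied to the sequence $(T_{k_n}: n \in \omega)$ produces a sequence $(z_n: n \in \omega)$ in $X$ such that $\sum_n T_{k_n}z_n$ is not norm convergent in $Y$.

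Now define $\bm{x} \in X^\omega$ by setting $x_{k_n}:=z_n$ for each $n \in \omega$ and $x_k:=0$ for every $k \in \omega \setminus S^\prime$. Then $\mathrm{supp}\,\bm{x} \subseteq S^\prime \in \mathcal{I}$, hence $\bm{x} \in c_{00}(X,\mathcal{I})$. For each $N\in \omega$, the partial sum $\sum_{k\le N}T_kx_k$ equals $\sum_{n: k_n\le N}T_{k_n}z_n$, so the sequence of partial sums of $\sum_k T_kx_k$ runs through exactly the partial sums of $\sum_n T_{k_n}z_n$ (with repetitions between consecutive jumps at the indices $k_n$). Since the latter fails to be Cauchy, so does the former, and therefore $\sum_k T_k x_k$ does not converge in the norm of $Y$. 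This contradicts the standing hypothesis and completes the proof. The only nonroutine point is the invocation of Lemma \ref{lemma:unboundedcase} to produce divergence along a prescribed infinite index set, which is exactly what tallness of $\mathcal{I}$ guarantees we can arrange within a support lying in $\mathcal{I}$.
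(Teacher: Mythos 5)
Your proof is correct and follows essentially the same route as the paper's: use tallness to extract an infinite subset of $\{k\in\omega: T_k\neq 0\}$ lying in $\mathcal{I}$, then invoke Lemma \ref{lemma:unboundedcase} to build a divergent transform supported on that set. The paper's version additionally passes first through a partition of $\omega$ into infinite $\mathcal{I}$-sets via Lemma \ref{lem:Itall} (a step reused later in Theorem \ref{thm:c00ellinfty} but not needed for the conclusion here), so your streamlined argument, with its explicit check that divergence transfers to the padded sequence, is exactly the essential core of the paper's proof.
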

\begin{proof}
Thanks to Lemma \ref{lem:Itall}, there exists a partition $\{E_n: n \in \omega\}$ of $\omega$ such that $E_n \in \mathcal{I}\cap \mathrm{Fin}^+$ for all $n$. 
Now, it follows by Lemma \ref{lemma:unboundedcase} that $F_n:=\{k \in E_n: T_k\neq 0\} \in \mathrm{Fin}$. 
Define $F:=\bigcup_n F_n$ and suppose for the sake of contradiction that $F\notin \mathrm{Fin}$. 
Since $\mathcal{I}$ is tall, there exists an infinite subset $F^\prime \subseteq F$ such that $F^\prime \in \mathcal{I}$. 
However, by construction $\{k \in F^\prime: T_k\neq 0\}=F^\prime$, which contradicts Lemma \ref{lemma:unboundedcase}. 
\end{proof}

Recalling that a matrix $A$ is said to be row-finite if $\{k \in \omega: A_{n,k}\neq 0\} \in \mathrm{Fin}$ for all $n\in \omega$,  %or, equivalently, $A_{n,\omega} \in c_{00}(\mathcal{L}(X,Y))$ for all $n \in \omega$. }
we provide strong necessary conditions on the entries of matrices in $(X^\omega, \ell_\infty(Y,\mathcal{J}))$ and $(c_{00}(X,\mathcal{I}), \ell_\infty(Y))$, where $\mathcal{J}$ is countably generated and $\mathcal{I}$ is tall. 
\begin{thm}\label{thm:Xomegaellinfty}
Let $A=(A_{n,k})$ be a matrix of linear operators in $\mathcal{L}(X,Y)$ such that $A \in (X^\omega, \ell_\infty(Y,\mathcal{J}))$, where $\mathcal{J}$ is a countably generated ideal on $\omega$. 
Then $A$ is row finite and there exist $J \in \mathcal{J}^\star$ and $k_1 \in \omega$ such that $A_{n,k}=0$ for all $n \in J$ and $k\ge k_1$. 
%In addition, if $\mathcal{J}=\mathrm{Fin}$, it is possible to choose $J=\omega$. \textcolor{red}{IFF? SEconDa parte ovvia!}
\end{thm}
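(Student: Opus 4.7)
The plan is to establish the two conclusions separately: row finiteness follows from Lemma~\ref{lemma:unboundedcase}, while the second claim will come from a recursive construction that exploits the countably generated structure of $\mathcal{J}$.

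For row finiteness, I fix $n \in \omega$ and observe that, by hypothesis, the series $\sum_k A_{n,k}x_k$ is convergent in the norm of $Y$ for every $\bm{x}\in X^\omega$. If the set $F_n := \{k : A_{n,k}\neq 0\}$ were infinite, then enumerating it and applying Lemma~\ref{lemma:unboundedcase} to the subsequence $(A_{n,k}: k\in F_n)$ of nonzero linear operators would produce values $y_k \in X$ making $\sum_{k \in F_n} A_{n,k}y_k$ divergent; extending by $0$ off $F_n$ would give $\bm{x} \in X^\omega$ with $A_n\bm{x}$ divergent, a contradiction.

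Set $m_n := \max\{k : A_{n,k}\neq 0\}$ (with the convention $m_n := -1$ if the $n$th row is zero). The second claim is equivalent to the sequence $(m_n)$ being $\mathcal{J}$-bounded, namely that $N_{k_1} := \{n: m_n \ge k_1\} \in \mathcal{J}$ for some $k_1\in\omega$; then $J := \omega \setminus N_{k_1}$ works. Assume for contradiction that $N_{k_1}\in\mathcal{J}^+$ for every $k_1$. Since $\mathcal{J}$ is countably generated, I fix an increasing sequence $(\tilde{Q}_t)$ in $\mathcal{J}$ such that $S\in\mathcal{J}$ if and only if $S\subseteq \tilde{Q}_t$ for some $t$.

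I then recursively pick strictly increasing $(n_j)$ and $(\tilde{k}_j)$ with $\tilde{k}_j = m_{n_j}$, $n_j \notin \tilde{Q}_j \cup \{n_0,\dots,n_{j-1}\}$, and $\tilde{k}_j > \max\{\tilde{k}_{j-1}, m_{n_0}, \dots, m_{n_{j-1}}\}$: at each step such $n_j$ exists because the required threshold defines a set in $\mathcal{J}^+$ (namely some $N_{K_j}$) while the finite union to avoid lies in $\mathcal{J}$. Monotonicity of $(\tilde{Q}_t)$ forces $\{n_j:j\in\omega\}\in\mathcal{J}^+$, since for any fixed $t$ all $n_j$ with $j\ge t$ lie outside $\tilde{Q}_t$. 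Next I define $\bm{x}$ by declaring $x_k := 0$ for $k\notin\{\tilde{k}_j\}$ and choosing $x_{\tilde{k}_j}$ as a sufficiently large scalar multiple of some $z_j\in X$ with $A_{n_j,\tilde{k}_j}z_j\neq 0$: by construction, $A_{n_j,\tilde{k}_i} = 0$ for $i>j$ (because $\tilde{k}_i > m_{n_j}$), so besides the newly chosen $A_{n_j,\tilde{k}_j}x_{\tilde{k}_j}$ only the already-fixed terms $x_{\tilde{k}_0},\dots,x_{\tilde{k}_{j-1}}$ contribute to $A_{n_j}\bm{x}$, and the scalar can be tuned to guarantee $\|A_{n_j}\bm{x}\|\ge j$. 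Then $\{n:\|A_n\bm{x}\|\ge M\} \supseteq \{n_j: j\ge M\} \in \mathcal{J}^+$ for every $M$, so $A\bm{x}\notin \ell_\infty(Y,\mathcal{J})$, contradicting the hypothesis.

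The delicate step is the recursive construction, where the three competing requirements (escaping $\tilde{Q}_j$, advancing both $n_j$ and $\tilde{k}_j$, and keeping $A_{n_j,\tilde{k}_j}\neq 0$) must be satisfied simultaneously; once that is in place, the blow-up of $\|A_{n_j}\bm{x}\|$ and the $\mathcal{J}^+$-membership of $\{n_j\}$ yield the desired contradiction.
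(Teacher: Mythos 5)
Your proposal is correct and follows essentially the same route as the paper: row finiteness via Lemma~\ref{lemma:unboundedcase}, then, assuming the tail condition fails, a recursive selection of rows $n_j$ escaping the $j$th generator while their maximal nonzero columns strictly increase, followed by the Lemma~\ref{lemma:unboundedcase}-style rescaling to blow up $\|A_{n_j}\bm{x}\|$ on a $\mathcal{J}^+$ set of indices. The only cosmetic difference is that you phrase the contradiction hypothesis as $\mathcal{J}$-unboundedness of the sequence $(m_n)$ of maximal column indices, whereas the paper works directly with the sets $S_n=\{t:\max F_t>\max(F_{s_0}\cup\cdots\cup F_{s_n})\}$; these are the same argument.
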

\begin{proof}
First, suppose that $A \in (X^\omega, \ell_\infty(Y,\mathcal{J}))$ and that $\mathcal{J}$ is generated by a sequence of increasing sets $(Q_j)$. 
Then $A$ is row finite by Lemma \ref{lemma:unboundedcase}, so that $F_n:=\{k \in\omega: A_{n,k}\neq 0\} \in \mathrm{Fin}$ for all $n \in \omega$. 
%Define $N:=\{n \in\omega: F_n\neq \emptyset\}$. 
%If $N\in \mathcal{J}$, it is enough to set $J=N^c$ and $k_1=0$. 
%Otherwise, assume hereafter that $N\in \mathcal{J}^+$, and 
Suppose for the sake of contradiction that 
\begin{equation}\label{eq:contradictionXomegaellinfty}
\forall J \in \mathcal{J}^\star, \forall k_1 \in \omega, \exists k\ge k_1, \exists n \in J, \quad A_{n,k}\neq 0.
\end{equation}
Define a strictly increasing sequence $(s_n)$ in $\omega$ such that $s_0:=\min\{n \in \omega: F_n \neq \emptyset\}$, and, recursively, $s_{n+1}:=\min(S_n\setminus Q_{j_n})$ for all $n\in \omega$, where
$$
S_n:=\{t \in \omega: \max F_t>\max(F_{s_0}\cup\cdots\cup F_{s_n})\}
\,\, \text{ and }\,\,
j_n:=\min\{j \in \omega: s_n \in Q_j\}.
$$
We claim that $S_n\setminus Q_{j_n}$ is nonempty, so that $s_{n+1}$ is well defined. Note that $S_n \in \mathcal{J}^+$: indeed, in the opposite, we would contradict \eqref{eq:contradictionXomegaellinfty} by setting $J=S_n^c$ and $k_1=1+\max(F_{s_0}\cup\cdots\cup F_{s_n})$. 
In addition, since $Q_{j_n} \in \mathcal{J}$, it follows that $S_n\setminus Q_{j_n}\in \mathcal{J}^+$ (in particular, it is nonempty). 
%\textcolor{red}{Hence $(S_n)$ is a decreasing sequence in $\mathcal{J}^+$. Since $\mathcal{J}$ is a $P^+$-ideal, there exists $S \in \mathcal{J}^+$ such that $S\setminus S_n \in \mathrm{Fin}$ for all $n$. }

Since the set $S:=\{s_n: n \in \omega\}$ is not contained in any $Q_j$, it follows that $S \in \mathcal{J}^+$. In addition, set $k_{n}:=\max F_{s_n}$ for all $n \in \omega$. Using a technique similar to the one used in Lemma \ref{lemma:unboundedcase}, we are going to construct a sequence $\bm{x}$ supported on $K:=\{k_n:n \in \omega\}$ such that $\|A_n\bm{x}\|\ge n$ for all $n\in \omega$, so that $A\bm{x}\notin \ell_\infty(Y,\mathcal{J})$. 
To this aim, pick an arbitrary sequence $\bm{y} \in X^\omega$ such that $A_{s_n,k_n}y_{n}\neq 0$ for all $n \in \omega$. 
Now define $\bm{x}$ recursively so that $x_{k_0}:=y_0$ and, if $x_{k_0},\ldots,x_{k_{n-1}}$ are given for some $n\ge 1$, then 
$x_{k_n}:=\kappa_n y_n$,  
%$$
%x_{k_n}:=y_n\cdot \frac{n+\|\sum_{i\le n-1}A_{s_n,k_i}x_{k_i}\|}{\|A_{s_n,k_n}y_n\|}, 
%$$
where $\kappa_n:=(n+\|\sum_{i\le n-1}A_{s_n,k_i}x_{k_i}\|)/\|A_{s_n,k_n}y_n\|$. 
Indeed, it follows that 
\begin{displaymath}
\begin{split}
\|A_{s_n}\bm{x}\|
=\left\|\sum\nolimits_{k \in F_{s_n}}A_{s_n,k}x_{k}\right\|
&=\left\|\sum\nolimits_{i\le n}A_{s_n,k_i}x_{k_i}\right\|\\
&\ge \kappa_n\|A_{s_n,k_n}y_{k_n}\|-\left\|\sum\nolimits_{i\le n-1}A_{s_n,k_i}x_{k_i}\right\|=n
\end{split}
\end{displaymath}
for all $n\ge 1$. Since $S\in \mathcal{J}^+$, we conclude that $\mathcal{J}\text{-}\limsup_n\|A_n\bm{x}\|\neq 0$. 
\end{proof}

\begin{thm}\label{thm:c00ellinfty}
Let $A=(A_{n,k})$ be a matrix of linear operators in $\mathcal{L}(X,Y)$ such that $A \in (c_{00}(X,\mathcal{I}), \ell_\infty(Y))$, where $\mathcal{I}$ is a tall ideal on $\omega$. 
Then there exists $k_1 \in \omega$ such that $A_{n,k}=0$ for all $n \in \omega$ and $k\ge k_1$. 
\end{thm}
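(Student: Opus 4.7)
The approach follows the same general blow-up strategy as Theorem \ref{thm:Xomegaellinfty}, but simplified by the hypothesis $\mathcal{J}=\mathrm{Fin}$ and adapted to exploit tallness of $\mathcal{I}$. First, since $A \in (c_{00}(X,\mathcal{I}), \ell_\infty(Y))$ implies in particular that $A_n\bm{x}=\sum_k A_{n,k}x_k$ is norm convergent for every $\bm{x} \in c_{00}(X,\mathcal{I})$ and $n \in \omega$, Lemma \ref{lem:dualc00I} gives that each $F_n:=\{k \in \omega: A_{n,k}\neq 0\}$ is finite, i.e., $A$ is row finite.

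Suppose, for the sake of contradiction, that no such $k_1$ exists, so that $F:=\bigcup_n F_n$ is infinite. The plan is to construct $\bm{x} \in c_{00}(X,\mathcal{I})$ with $\sup_n \|A_n\bm{x}\|=\infty$ by running the recursive construction of Lemma \ref{lemma:unboundedcase} along a carefully sparsified subset of $F$. Proceed greedily: pick $k_0 \in F$ and $n_0 \in \omega$ with $A_{n_0,k_0}\neq 0$; having defined $k_0<\cdots<k_i$ in $F$ and $n_0,\ldots,n_i$ with $A_{n_j,k_j}\neq 0$, use the infiniteness of $F$ and the finiteness of $F_{n_0}\cup\cdots\cup F_{n_i}$ to pick $k_{i+1} \in F$ with $k_{i+1}>\max(F_{n_0}\cup\cdots\cup F_{n_i})$, then choose $n_{i+1}$ with $A_{n_{i+1},k_{i+1}}\neq 0$. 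By construction, $k_j \notin F_{n_i}$ whenever $j>i$, so on any sequence supported in $\{k_i: i\in\omega\}$ the value $A_{n_i}\bm{x}$ depends only on the coordinates $x_{k_0},\ldots,x_{k_i}$.

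Since $\mathcal{I}$ is tall, the infinite set $\{k_i: i \in \omega\}$ contains an infinite subset $E \in \mathcal{I}$; enumerate it as $\{k_{i_p}: p \in \omega\}$ with $i_0<i_1<\cdots$, and note that the no-future-contamination property is inherited by $E$. For each $p$, choose $y_p \in X$ with $A_{n_{i_p},k_{i_p}}y_p\neq 0$ and define $\bm{x}$ supported on $E$ recursively by $x_{k_{i_0}}:=y_0$ and, for $p\ge 1$, $x_{k_{i_p}}:=\kappa_p y_p$ with
$$
\kappa_p := \frac{p+\left\|\sum\nolimits_{q<p}A_{n_{i_p},k_{i_q}}x_{k_{i_q}}\right\|}{\|A_{n_{i_p},k_{i_p}}y_p\|}.
$$
Then $\bm{x} \in c_{00}(X,\mathcal{I})$ because $\mathrm{supp}\,\bm{x}\subseteq E \in \mathcal{I}$, while
$$
A_{n_{i_p}}\bm{x}=\sum\nolimits_{q\le p} A_{n_{i_p},k_{i_q}}x_{k_{i_q}},
$$
so the triangle inequality gives $\|A_{n_{i_p}}\bm{x}\|\ge \kappa_p\|A_{n_{i_p},k_{i_p}}y_p\|-\|\sum_{q<p}A_{n_{i_p},k_{i_q}}x_{k_{i_q}}\|=p$ for all $p\ge 1$. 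This contradicts $A\bm{x} \in \ell_\infty(Y)$.

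The main obstacle is ensuring that the blow-up of Lemma \ref{lemma:unboundedcase} is not neutralized by contributions from coordinates not yet defined, which is precisely why the greedy choice $k_{i+1}>\max(F_{n_0}\cup\cdots\cup F_{n_i})$ is essential; once that is in place, tallness is used only to push the support of $\bm{x}$ into $\mathcal{I}$, and the recursive step is routine.
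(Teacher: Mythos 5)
Your proof is correct. It takes a genuinely more self-contained route than the paper: after the common first step (row finiteness via Lemma \ref{lem:dualc00I}), the paper disposes of the rest in two lines by reusing Theorem \ref{thm:Xomegaellinfty} as a black box together with the partition of $\omega$ into infinite $\mathcal{I}$-sets from Lemma \ref{lem:Itall} --- essentially rerunning the proof of Lemma \ref{lem:dualc00I} with ``$T_k\neq 0$'' replaced by ``some column entry is nonzero.'' You instead redo the sliding/blow-up construction by hand: the greedy choice $k_{i+1}>\max(F_{n_0}\cup\cdots\cup F_{n_i})$ is exactly the sparsification that the proof of Theorem \ref{thm:Xomegaellinfty} performs internally (there via the sets $S_n$ and the maxima of the row supports), and your observation that the resulting rows $n_i$ see no ``future'' coordinates is the same no-contamination property that makes the Lorentz--Macphail-style scaling $\kappa_p$ work. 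The trade-off is clear: the paper's argument is shorter and modular but uses tallness twice (once through the partition, once to shrink $F$ into $\mathcal{I}$) and leans on a theorem proved for general countably generated $\mathcal{J}$, whereas your argument uses tallness exactly once, needs nothing beyond Lemmas \ref{lem:dualc00I} and \ref{lemma:unboundedcase}, and makes the mechanism of the contradiction fully explicit. All the delicate points --- well-definedness of the greedy recursion from the infinitude of $F$ and finiteness of each $F_{n_i}$, distinctness of the rows $n_{i_p}$, inheritance of the no-contamination property by the subsequence indexed by $E$, and membership of $\bm{x}$ in $c_{00}(X,\mathcal{I})$ because $\mathcal{I}$ is closed under subsets --- are handled or follow immediately from what you wrote.
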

\begin{proof}
First, $A$ is row finite by Lemma \ref{lem:dualc00I}. Now, it is enough to repeat the proof of Lemma \ref{lem:dualc00I} replacing Lemma \ref{lemma:unboundedcase} with Theorem \ref{thm:Xomegaellinfty}, with $F_n:=\{k \in \omega: \exists m \in \omega, A_{m,k}\neq 0\}$. 
\end{proof}

\section{Key tools}\label{sec:technical}

Let $A=(A_{n,k})$ be a matrix of linear operators in $\mathcal{L}(X,Y)$ and note that, for each $n \in \omega$ and sequence $\bm{x}$ taking values on the closed unit ball $B_X$, we have 
\begin{equation}\label{eq:inequalitytoverifyequality}
\mathcal{J}\text{-}\limsup\nolimits_n\|A_{n,\omega}\| \ge \mathcal{J}\text{-}\limsup\nolimits_n\|A_n\bm{x}\|
\end{equation}

In this section, we provide sufficient conditions on the matrix $A$ and on the ideal $\mathcal{J}$ for the existence of a sequence $\bm{x}$ such that the above inequality is actually an equality. 
The following result is the operator version of the sliding jump argument contained in the proof of \cite[Theorem 1.3]{ConnorLeo} for the one-dimensional case $X=Y=\mathbf{R}$ and $\mathcal{J}=\mathrm{Fin}$.  

%\textcolor{red}{E' vero che $\lim_n\|A_{n,k}\|=0$ sse $\lim_n A_{n,k}=0$? Fissiamo $k$, sappiamo che RHS significa $A_{n}x\to 0$ per ogni $x$ sulla sfera. Si, dal principio di uniforme limitatezza Banach Steinhaus, link commento. Ma ti serve che tutti gli operatori siano continui!!}
%https://en.wikipedia.org/wiki/Uniform_boundedness_principle
\begin{thm}\label{thm:key}
Let $\mathcal{J}$ be an ideal on $\omega$ which is countably generated. 
Let also $A=(A_{n,k})$ be a matrix of linear operators in $\mathcal{L}(X,Y)$ which satisfies conditions \ref{item:T1flat} for some $k_0 \in \omega$, \ref{item:T3natural}, and 
%\ref{item:S3sharp}. 
\ref{item:T6flat}. 
%with $\mathcal{J}=\mathrm{Fin}$. 
Then there exists a sequence $\bm{x}$ with values on the unit sphere $S_X$ such that
$$
\mathcal{J}\text{-}\limsup\nolimits_{n}\|A_{n,\omega}\|=
\mathcal{J}\text{-}\limsup\nolimits_{n}\|A_n\bm{x}\|.
$$
%
%\textcolor{red}{Modificarlo con T1beta, T6b normale, e usare J rapid+ P+}
%provided that $\limsup\nolimits_{n}\|A_{n,\omega}\|>0$. 
%Let $X,Y$ be two Banach spaces, and let $A \in (\ell_\infty(X),\ell_\infty(Y))$ be a matrix 
%of \textup{(}not necessarily bounded\textup{)} linear operators from $X$ into $Y$ 
%such that\textup{:}
%\begin{enumerate}[label={\rm (\textsc{L}\arabic{*})}]
%\setcounter{enumi}{3}
%\item \label{item:T4limitcolumns} $\lim_n \|A_{n,k}\|=0$ for all $k\in \omega$. 
%\end{enumerate}
%Then there exists a sequence $x$ such that $\|x_n\| =1$ for all $n \in \omega$ and 
%$$
%\limsup\nolimits_{n}\|A_{n,\omega}\|=\limsup\nolimits_{n}\|A_nx\|.
%$$
\end{thm}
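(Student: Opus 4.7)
Write $L := \mathcal{J}\text{-}\limsup_n \|A_{n,\omega}\|$ and observe first that $L$ is finite: by \ref{item:T1flat} one has $M := \sup_{n \in J_0}\|A_{n,\ge k_0}\|<\infty$, while \ref{item:T6flat} forces $\mathcal{J}\text{-}\lim_n \|A_{n,k}\|=0$ for each of the finitely many $k<k_0$, so subadditivity of the group norm and of $\mathcal{J}\text{-}\limsup$ give $L\le M$. The reverse inequality $\mathcal{J}\text{-}\limsup_n\|A_n\bm{x}\|\le L$ holds for any $\bm{x}\in S_X^\omega$ by \eqref{eq:inequalitytoverifyequality}, so the content of the theorem is to construct $\bm{x}$ witnessing the converse estimate.

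The plan is a sliding-jump recursion. Fix an increasing sequence $(Q_j)$ generating $\mathcal{J}$ and build simultaneously a strictly increasing sequence $(n_j)\subseteq J_0$, cutoffs $k_0\le K_0<K_1<\cdots$, and vectors $x_k\in S_X$. At step $j$, with $n_0,\ldots,n_{j-1}$, $K_0,\ldots,K_j$, and $x_0,\ldots,x_{K_j-1}$ already defined, I would pick $n_j>n_{j-1}$ lying in $J_0\setminus Q_j$ with
\begin{equation*}
\|A_{n_j,\omega}\|>L-\tfrac{1}{j+1}
\quad \text{ and }\quad
\sum\nolimits_{k<K_j}\|A_{n_j,k}\|<2^{-j}.
\end{equation*}
The first of these four constraints defines a set in $\mathcal{J}^\star$, the second a set in $\mathcal{J}^+$, the third a set in $\mathcal{J}^+$ by definition of $L$, and the last a set in $\mathcal{J}^\star$ by applying \ref{item:T6flat} individually to the $K_j$ values $k<K_j$; their intersection therefore lies in $\mathcal{J}^+$ and is nonempty. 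Then, via \ref{item:T3natural}, choose $K_{j+1}>K_j$ with $\|A_{n_j,\ge K_{j+1}}\|<2^{-j}$, and using the definition of the group norm together with the convexity remark after it (which allows optimisers on $S_X$), pick $x_k\in S_X$ for $k\in I_j:=[K_j,K_{j+1})$ with $\|\sum_{k\in I_j}A_{n_j,k}x_k\|>\|A_{n_j,I_j}\|-2^{-j}$.

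A three-piece triangle inequality closes the proof. Subadditivity of the group norm under $\omega=[0,K_j)\sqcup I_j\sqcup[K_{j+1},\infty)$ yields $\|A_{n_j,I_j}\|\ge \|A_{n_j,\omega}\|-2\cdot 2^{-j}$, and combining this with $\|\sum_{k<K_j}A_{n_j,k}x_k\|\le \sum_{k<K_j}\|A_{n_j,k}\|<2^{-j}$ and $\|\sum_{k\ge K_{j+1}}A_{n_j,k}x_k\|\le\|A_{n_j,\ge K_{j+1}}\|<2^{-j}$ gives $\|A_{n_j}\bm{x}\|>L-1/(j+1)-5\cdot 2^{-j}$. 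Since $n_j\notin Q_j$ and the $Q_j$ are increasing, the set $S:=\{n_j:j\in\omega\}$ cannot be covered by any single $Q_{j_0}$, hence $S\in\mathcal{J}^+$; for every $\varepsilon>0$ all but finitely many $n_j$ satisfy $\|A_{n_j}\bm{x}\|>L-\varepsilon$, and since $S$ minus a finite set is still in $\mathcal{J}^+$, we conclude $\mathcal{J}\text{-}\limsup_n\|A_n\bm{x}\|\ge L$. The only delicate point is the simultaneous arrangement of the four constraints on $n_j$, but each of them is governed by a different one of the three hypotheses \ref{item:T1flat}, \ref{item:T3natural}, \ref{item:T6flat} (together with the countable generation of $\mathcal{J}$), so the recursion goes through cleanly.
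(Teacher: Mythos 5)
Your argument is correct and is essentially the paper's own proof: the same sliding-hump recursion that picks rows $n_j\in J_0\setminus Q_j$ on which $\|A_{n_j,\omega}\|$ nearly attains the $\mathcal{J}$-limsup (finite by \ref{item:T1flat} and \ref{item:T6flat}), column cutoffs controlled by \ref{item:T3natural} and \ref{item:T6flat}, near-optimizers on $S_X$ for the group norm over the resulting blocks, and the same three-term triangle inequality, concluding because $\{n_j: j\in\omega\}$ is contained in no $Q_{j_0}$ and hence lies in $\mathcal{J}^+$. One cosmetic slip: the constraint $n_j\notin Q_j$ defines a set in $\mathcal{J}^\star$ (not merely $\mathcal{J}^+$), which is exactly what you need so that the intersection of your constraint sets --- all but one in $\mathcal{J}^\star$, the remaining one in $\mathcal{J}^+$ --- is guaranteed to lie in $\mathcal{J}^+$.
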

\begin{proof}
Suppose that $\mathcal{J}$ is generated by an increasing sequence of sets $(Q_n)$, and define 
$$%\begin{equation}\label{eq:definitioneta0}
\eta_0:=\mathcal{J}\text{-}\limsup\nolimits_{n}\|A_{n,\omega}\| \in [0,\infty].
$$%\end{equation} 
%Thanks to Lemma \ref{thm:maddoxmain}, there exists an integer $k_0 \in \omega$ which satisfies conditions \ref{item:T1}-\ref{item:T2}. 
%In addition, since $\lim_n \|A_{n,k}\|=0$ for all $k \in \omega$, 
It follows by \ref{item:T6flat} that 
%\begin{equation}\label{eq:c0normfinitecolumns}
%\forall k\in \omega, \quad \lim\nolimits_n\|A_{n, \le k}\|=\sum\nolimits_{j\le k}\lim\nolimits_n\|A_{n, j}\|=0.
%\end{equation}
\begin{equation}\label{eq:c0normfinitecolumns}
\forall E \in \mathrm{Fin}, \quad 
\mathcal{J}\text{-}\lim\nolimits_n\|A_{n, E}\|=\sum\nolimits_{k \in E}\mathcal{J}\text{-}\lim\nolimits_n\|A_{n, k}\|=0.
\end{equation}
Hence there exists $J_1 \in \mathcal{J}^\star$ such that $\|A_{n, \le k_0}\|\le 1$ for all $n\in J_1$. 
Setting $J_2:=J_0\cap J_1 \in \mathcal{J}^\star$, it follows that 
$$
\forall n\in J_2, \quad 
\|A_{n,\omega}\| \le \|A_{n, \le k_0}\|+\|A_{n,\ge k_0}\| \le 1+\sup\nolimits_{t\in J_0}\|A_{t,\ge k_0}\|,
$$
which is finite by condition \ref{item:T1flat}. 
This proves that $\eta_0 \neq \infty$. 
Moreover, if $\eta_0=0$, it is enough to let $\bm{x}$ be an arbitrary sequence with values in $S_X$, thanks to \eqref{eq:inequalitytoverifyequality}. 
Hence, let us assume hereafter that $\eta_0 \in (0,\infty)$. 

At this point, for each $n \in \omega$, define the set 
\begin{equation}\label{eq:SdefinitioninJplus}
E_n:=\left\{t \in J_2: \left|\|A_{t,\omega}\|-\eta_0\right|\le \frac{\eta_0}{2^{n}}\right\},
\end{equation}
%\textcolor{red}{Tutti $E_n$ sono in $J^+$, usa $P^+$ property per trovare singolo $E^\star$, e ora property rapid$^+$} 
and note that $(E_n)$ is a decreasing sequence of sets in $\mathcal{J}^+$. 
Define also two strictly increasing sequences $(s_n)$ and $(m_n)$ of nonnegative integers, a descreasing sequence $(H_n)$ of sets of $\mathcal{J}^\star$, and a decreasing sequence $(S_n)$ of sets in $\mathcal{J}^+$ as it follows. Set $s_0:=\min S_0$, with $S_0:=E_0$, $H_0:=\omega$, and choose $m_0 \in \omega$ such that $\|A_{s_0, \ge m_0}\|\le \eta_0$, which is possible by \ref{item:T3natural}. 
Now, suppose that $s_0,\ldots,s_{n-1},m_0,\ldots,m_{n-1} \in \omega$ and the sets $S_0,\ldots,S_{n-1}\in \mathcal{J}^+$ and $H_0,\ldots,H_{n-1} \in \mathcal{J}^\star$ have been defined for some $n\ge 1$. 
\begin{enumerate}[label=(\roman*)]
\item \label{item:condition_setHn} Set $H_n:=H_{n-1} \cap \left\{t \in \omega: \|A_{t,m_{n-1}}\|\le \frac{\eta_0}{2^n}\right\}$, so that $H_n \in \mathcal{J}^\star$ by  \eqref{eq:c0normfinitecolumns}; 
\item \label{item:condition_setSn} Define $S_n:=E_n \cap H_n$, 
%where $H_n:=H_{n-1} \cap \left\{t \in \omega: \|A_{t,m_{n-1}}\|\le \frac{\eta_0}{2^n}\right\}$. 
%It follows by \eqref{eq:c0normfinitecolumns} that $H_n \in \mathcal{J}^\star$, 
hence $S_n \in \mathcal{J}^+$ and
%, in particular, 
\begin{equation}\label{eq:conditionsn}
\forall t\in S_n, \quad \|A_{t,\le m_{n-1}}\|\le \frac{\eta_0}{2^{n}}.
\end{equation}
%which is possible thanks to \eqref{eq:c0normfinitecolumns}; in particular, the above inequality holds for all $t \in S_n$. 
\item \label{item:condition_sn}  
Choose $s_n \in S_{n}\setminus Q_z$, where $k$ is an integer such that $s_{n-1} \in Q_k$. In particular, $s_n>s_{n-1}$. Note that this is possible since $S_{n}\setminus Q_k \in \mathcal{J}^+$. 
\item \label{item:condition_mn} Lastly, thanks to \ref{item:T3natural}, choose $m_n>m_{n-1}$ such that 
\begin{equation}\label{eq:condition3}
\|A_{s_n, \ge m_n}\| \le \frac{\eta_0}{2^{n}}.
\end{equation}
%which is possible by condition \ref{item:T3natural}.
\end{enumerate}

To conclude the proof, let $\bm{x}=(x_n)$ be a sequence taking values on the unit sphere $S_X$ such that 
\begin{equation}\label{eq:condition4}
\forall n \ge 1, \quad 
\left\|\sum\nolimits_{k \in M_n}A_{s_n,k}x_{k}\right\|\ge \left\|A_{s_n,M_n}\right\| -\frac{\eta_0}{2^{n}},
\end{equation}
where $M_n:=\{k \in \omega: m_{n-1}<k\le m_n\}$, and $x_n$ is arbitrarily chosen on the unit sphere $S_X$ for all $n \in [0,m_0]$. 
%Note that, in fact, \emph{every} $x_n$ can be chosen on the unit sphere of $X$: this depends on the fact that, given distinct $a,b \in X$, the function $f: [0,1] \to \mathbf{R}$ defined by $f(t):=\|a+t(b-a)\|$ has a point of maximum in $t=0$ or in $t=1$: indeed, the segment $\{a+t(b-a): t \in [0,1]\}$ is contained in the closed ball with center $0$ and radius $\max\{\|a\|,\|b\|\}$, which is convex. 
%
It follows by \eqref{eq:SdefinitioninJplus}, \eqref{eq:conditionsn}, \eqref{eq:condition3}, and \eqref{eq:condition4} that  
\begin{equation}\label{eq:chaindifficult}
\begin{split}
\forall n\ge 1, \quad 
\|A_{s_n}\bm{x}\| &
\ge \left\|\sum\nolimits_{k \in M_n}A_{s_n,k}x_{k}\right\|-\left\|A_{s_n,\le m_{n-1}}\right\|-\left\|A_{s_n,\ge m_{n}}\right\|\\
&\ge \left\|A_{s_n,M_n}\right\|-\left\|A_{s_n,\le m_{n-1}}\right\|-\left\|A_{s_n,\ge m_{n}}\right\|-\frac{\eta_0}{2^{n}} \\
&\ge \left\|A_{s_n,\omega}\right\|-2\left\|A_{s_n,\le m_{n-1}}\right\|-2\left\|A_{s_n,\ge m_{n}}\right\|-\frac{\eta_0}{2^{n}}\\
&\ge \eta_0\left(1-\frac{1}{2^{n}}\right)-\frac{\eta_0}{2^{n-1}}-\frac{\eta_0}{2^{n-1}}-\frac{\eta_0}{2^{n}} >\eta_0\left(1-\frac{1}{2^{n-3}}\right).\\
\end{split}
\end{equation}
%for all integers $n \ge 1$. 
It follows by construction that we cannot find an integer $k$ such that $s_n \in Q_k$ for all $n \in \omega$. Therefore $\{s_n: n \in \omega\} \in \mathcal{J}^+$ and $\mathcal{J}\text{-}\limsup_n \|A_n\bm{x}\| \ge \eta_0$. 

The converse inequality follows by \eqref{eq:inequalitytoverifyequality}, completing the proof. 
%Conversely, it is easy to see that 
%$$
%\forall n \in \omega, \quad 
%\|A_{n,\omega}\| 
%\ge \sup_{F \in \mathrm{Fin}}\left\|\sum\nolimits_{k \in F}A_{n,k}x_k\right\|
%\ge \lim\nolimits_{m}\left\|\sum\nolimits_{k\le m}A_{n,k}x_k\right\|
%=\left\|A_n\bm{x}\right\|, 
%$$
%which implies that $\eta_0 \ge \limsup_n \|A_n\bm{x}\|$, completing the proof. 
\end{proof}

Related results (in the case $X=Y=\mathbf{R}$ and $\mathcal{J}=\mathrm{Fin}$) can be found in \cite[Lemma 3.1]{MR27351} and \cite[Corollary 12]{MR241957}.  
The following corollary is immediate (we omit details):
\begin{cor}\label{cor:nontrivialJFin}
With the same hypothesis of Theorem \ref{thm:key}, for each  $E\subseteq \omega$, there exists a sequence $\bm{x}$ 
taking values on $S_X\cup \{0\}$ and supported on $E$ such that 
%such that $\|x_n\|=1$ for all $n \in E$, $x_n=0$ otherwise, and 
%taking values in the closed unit ball of $X$ such that 
$$
%\{n \in \omega: x_n \neq 0\}\subseteq E \text{ and }
\mathcal{J}\text{-}\limsup\nolimits_{n}\|A_{n,E}\|=
\mathcal{J}\text{-}\limsup\nolimits_{n}\|A_n\bm{x}\|.
$$ 
\end{cor}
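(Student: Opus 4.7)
The plan is to reduce to Theorem \ref{thm:key} by passing to the submatrix obtained by zeroing out all columns outside $E$. Concretely, I would define $A^{(E)}=(A^{(E)}_{n,k})$ by
$$
A^{(E)}_{n,k}:=
\begin{cases}
A_{n,k} & \text{if }k \in E,\\
0 & \text{if }k \notin E.
\end{cases}
$$
The key observation is that $\|A^{(E)}_{n,F}\|=\|A_{n,F\cap E}\|$ for every $F\subseteq \omega$; in particular $\|A^{(E)}_{n,\omega}\|=\|A_{n,E}\|$ and $\|A^{(E)}_{n,\ge k}\|\le \|A_{n,\ge k}\|$.

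Next, I would check that $A^{(E)}$ inherits the three hypotheses of Theorem \ref{thm:key}. Condition \ref{item:T1flat} transfers immediately from the inequality $\|A^{(E)}_{n,\ge k_0}\|\le \|A_{n,\ge k_0}\|$ (and, for $n\notin J_0$, the same witness $f(n)$ works). Condition \ref{item:T3natural} follows from $\|A^{(E)}_{n,\ge k}\|\le \|A_{n,\ge k}\|\to 0$. Condition \ref{item:T6flat} is also immediate, since $\|A^{(E)}_{n,k}\|$ is either $\|A_{n,k}\|$ or $0$, so $\mathcal{J}\text{-}\lim_n \|A^{(E)}_{n,k}\|=0$ for all $k$.

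Applying Theorem \ref{thm:key} to $A^{(E)}$ produces a sequence $\bm{y}=(y_k)$ with values on $S_X$ such that
$$
\mathcal{J}\text{-}\limsup\nolimits_n\|A^{(E)}_{n,\omega}\|=\mathcal{J}\text{-}\limsup\nolimits_n\|A^{(E)}_n\bm{y}\|.
$$
Finally, I would define the desired sequence $\bm{x}$ by $x_k:=y_k$ if $k \in E$ and $x_k:=0$ otherwise, so that $\bm{x}$ takes values in $S_X\cup\{0\}$ and is supported on $E$. Since $A^{(E)}_n\bm{y}=\sum_{k\in E}A_{n,k}y_k=\sum_k A_{n,k}x_k=A_n\bm{x}$, together with $\|A^{(E)}_{n,\omega}\|=\|A_{n,E}\|$, the displayed equality in the corollary follows.

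This argument is essentially a compatibility check; there is no real obstacle, only the bookkeeping that the group norm, conditions \ref{item:T1flat}, \ref{item:T3natural}, \ref{item:T6flat}, and the action on sequences all behave functorially with respect to zeroing out columns outside $E$.
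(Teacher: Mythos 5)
Your proposal is correct and is precisely the argument the paper has in mind: the corollary is stated as ``immediate (we omit details)'' after Theorem \ref{thm:key}, and the intended details are exactly your reduction to the column-restricted matrix $A^{(E)}$, whose hypotheses \ref{item:T1flat}, \ref{item:T3natural}, \ref{item:T6flat} transfer via $\|A^{(E)}_{n,F}\|=\|A_{n,F\cap E}\|$. The bookkeeping you carry out (including extending $\bm{y}$ by $0$ off $E$ so that $A_n\bm{x}=A^{(E)}_n\bm{y}$) is complete and correct.
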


In the finite dimensional case, the statement can be simplified: 
%. For the special case where $X=Y=\mathbf{R}$ and $\mathcal{J}=\mathrm{Fin}$, see also \cite[Corollary 12]{MR241957}. 
\begin{cor}\label{cor:finitedimensionalkey}
With the same notations of Corollary \ref{cor:finitedimensionmain}, suppose that $X=\mathbf{R}^d$, $Y=\mathbf{R}^m$, and that $\mathcal{J}$ is countably generated. Let also $A$ be a matrix which satisfies\textup{:}
\begin{enumerate}[label={\rm (\textsc{K}\arabic{*})}]
\item \label{item:K1} $\sup_{n \in J_0} \sum_k\sum_{i,j}\left|a_{n,k}(i,j)\right|<\infty$ for some $J_0 \in \mathcal{J}^\star$\textup{;}
\item \label{item:K2} $\sum_k\sum_{i,j}\left|a_{n,k}(i,j)\right|<\infty$ for all $n \in \omega$\textup{;}
\item \label{item:K3} $\mathcal{J}\text{-}\lim_n \sum_{i,j}\left|a_{n,k}(i,j)\right|=0$ for all $k \in \omega$\textup{.}
\end{enumerate}
Then, for each $E\subseteq \omega$, there exists a sequence $\bm{x}=(x^{(0)},x^{(1)},\ldots)$ 
taking values on $S_X\cup \{0\}$ and supported on $E$ such that 
$$
\mathcal{J}\text{-}\limsup\nolimits_{n}\sum\nolimits_{k \in E}\max\nolimits_j\sum\nolimits_{i}\left|a_{n,k}(i,j)\right|=
\mathcal{J}\text{-}\limsup\nolimits_{n}
\sum\nolimits_i \left|\sum\nolimits_k \sum\nolimits_j a_{n,k}(i,j) x^{(k)}_j \right|.
%\sum_{i=1}^m |(\sum_k A_{n,k}x^{(k)})_i |
%\|\sum_k A_{n,k}x^k\|
%\|A_n\bm{x}\|.
$$ 
\end{cor}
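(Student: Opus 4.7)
The plan is to specialize Corollary~\ref{cor:nontrivialJFin} to the finite-dimensional setting by making a specific choice of norms. I will endow both $X=\mathbf{R}^d$ and $Y=\mathbf{R}^m$ with the $\ell_1$-norm, because under this choice one computes $\|A_{n,k}\|_{\mathrm{op}}=\max_j \sum_i |a_{n,k}(i,j)|$ (which is precisely the $k$-th summand of the LHS) and $\|A_n\bm{x}\|_Y = \sum_i \bigl|\sum_{k,j} a_{n,k}(i,j)\,x^{(k)}_j\bigr|$ for any sequence $\bm{x}$ (which is precisely the integrand of the RHS).

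First I would verify that K1, K2, K3 imply the hypotheses T1flat, T3natural, T6flat of Theorem~\ref{thm:key}. All three follow from the coarse bound $\|A_{n,E}\| \le \sum_{k \in E}\sum_{i,j}|a_{n,k}(i,j)|$ valid in the $\ell_1$-setting: K1 delivers T1flat with $k_0 = 0$, K2 delivers T3natural via tail convergence of an absolutely convergent series, and K3 delivers T6flat since $\|A_{n,k}\|_{\mathrm{op}} \le \sum_{i,j}|a_{n,k}(i,j)|$. Invoking Corollary~\ref{cor:nontrivialJFin} then yields a sequence $\bm{x}$ with values on $S_X \cup \{0\}$, supported on $E$, such that
$$\mathcal{J}\text{-}\limsup\nolimits_n \|A_{n,E}\| \;=\; \mathcal{J}\text{-}\limsup\nolimits_n \|A_n\bm{x}\|_Y,$$
and, by the identifications above, the right-hand side is exactly the RHS of the claimed equality.

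The remaining task is to identify $\mathcal{J}\text{-}\limsup_n \|A_{n,E}\|$ with the LHS quantity $\mathcal{J}\text{-}\limsup_n \sum_{k\in E} c_{n,k}$, where I set $c_{n,k}:=\max_j \sum_i |a_{n,k}(i,j)|$. One direction, $\|A_{n,E}\| \le \sum_{k \in E} c_{n,k}$, is immediate from the triangle inequality applied to $\sum_i |\sum_k (A_{n,k}x_k)_i|$. The reverse, however, is the delicate step where I anticipate the main obstacle. To push it through, I would revisit the sliding jump inside the proof of Theorem~\ref{thm:key}: in each window $M_n$ I would pick $x^{(k)}=\sigma_k e_{j^*_k}$ with $j^*_k$ realizing $c_{s_n,k}$, and then exploit the fact that $\mathbf{R}^m$ admits only finitely many sign patterns (a pigeonhole then isolates a subwindow on which the vectors $A_{s_n,k}x^{(k)}$ align coordinate-wise, so no cancellation occurs in the $\ell_1$-norm) together with K3, which guarantees that the further refinement of the window does not forfeit asymptotic mass. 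Iterating this refinement, indexed by the countable generating sequence of $\mathcal{J}$, would close the gap and deliver the claimed equality of $\mathcal{J}$-limsups.
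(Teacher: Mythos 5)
Your reduction to Corollary \ref{cor:nontrivialJFin} --- endowing $\mathbf{R}^d$ and $\mathbf{R}^m$ with the $1$-norms, checking that \ref{item:K1}--\ref{item:K3} yield \ref{item:T1flat} (with $k_0=0$), \ref{item:T3natural} and \ref{item:T6flat}, and reading off $\|A_n\bm{x}\|$ as the right-hand side --- is exactly the paper's proof, and that part is correct. You have also correctly isolated the one step that the paper's three-line proof passes over in silence: Corollary \ref{cor:nontrivialJFin} produces $\bm{x}$ with $\mathcal{J}\text{-}\limsup_n\|A_{n,E}\|=\mathcal{J}\text{-}\limsup_n\|A_n\bm{x}\|$, where $\|A_{n,E}\|$ is the \emph{group norm}, and one must still compare the group norm with $\sum_{k\in E}\max_j\sum_i|a_{n,k}(i,j)|=\sum_{k\in E}\|A_{n,k}\|$.

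The pigeonhole-on-sign-patterns argument you sketch for the reverse inequality cannot be made to work, because that inequality is false for $m\ge 2$: one computes $\|A_{n,E}\|=\max_{\varepsilon\in\{-1,1\}^m}\sum_{k\in E}\max_j\left|\sum_i\varepsilon_i a_{n,k}(i,j)\right|$, so the group norm forces a \emph{single} sign pattern $\varepsilon$ to serve all columns simultaneously, whereas $\sum_{k\in E}\|A_{n,k}\|$ lets each $k$ choose its own; a pigeonhole only recovers a dimension-dependent fraction of the mass. Concretely, take $d=1$, $m=2$, $\mathcal{J}=\mathrm{Fin}$, $E=\omega$, and let $A_{n,2n}x=(x,x)$, $A_{n,2n+1}x=(x,-x)$, $A_{n,k}=0$ otherwise. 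Then \ref{item:K1}--\ref{item:K3} hold and the left-hand side equals $4$ for every $n$, yet for \emph{every} sequence $\bm{x}$ with values in $[-1,1]$ one has $\|A_n\bm{x}\|_1=|x^{(2n)}+x^{(2n+1)}|+|x^{(2n)}-x^{(2n+1)}|=2\max(|x^{(2n)}|,|x^{(2n+1)}|)\le 2$. Since $\|A_n\bm{x}\|\le\|A_{n,E}\|$ for every admissible $\bm{x}$, no refinement of the sliding-jump construction can close this gap: the identity you are trying to prove in your final paragraph fails already at the level of each fixed $n$. The statement is correct as written only for $m=1$ (where each $A_{n,k}$ has a one-dimensional range and signs can be aligned column by column), or after replacing the left-hand side by $\mathcal{J}\text{-}\limsup_n\|A_{n,E}\|$; with that reading, your first two paragraphs already constitute a complete proof coinciding with the paper's.
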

\begin{proof}
Conditions \ref{item:K1} and \ref{item:K2} correspond to \ref{item:T1flat}, and \ref{item:K3} corresponds to \ref{item:T6flat}, cf. the proof of Corollary \ref{cor:finitedimensionmain}. 
In addition, \ref{item:T1flat} implies \ref{item:T3natural} by Proposition \ref{prop:implications}.\ref{item:5simplification} and Remark \ref{rmk:ont1flat} below. 
The claim follows by Corollary \ref{cor:nontrivialJFin}. 
\end{proof}

\begin{rmk}\label{rmk:conditionT5insteadofT6flat}
%Suppose that $\mathcal{J}=\mathrm{Fin}$. 
As it has been previously observed in Remark \ref{rmk:evilgroupnorm}, there exists a matrix $A$ which does not satisfy condition \ref{item:T6flat} with $\mathcal{J}=\mathrm{Fin}$ (which corresponds to \ref{item:S3sharp}) and, on other hand, it satisfies \ref{item:T5} with $\mathcal{I}=\mathcal{J}=\mathrm{Fin}$ (which corresponds to \ref{item:S3}). 
In addition, it is immediate to see that $A$ satisfies  \ref{item:T1} and \ref{item:T3natural} since $A_{n,k}=0$ for all $n\ge 0$ and $k>0$. 
However, in this case, the conclusion of Theorem \ref{thm:key} fails: indeed, if $\bm{x}=(x_0,x_1,\ldots)$ is a sequence taking values on the closed unit ball of $\ell_2$ then $\lim_n\|A_n\bm{x}\|=\lim_n\|A_{n,0}x_0\|= 0$, and $\|A_{n,\omega}\|=\|A_{n,0}\|=1$ for all $n\in \omega$. 
Hence we cannot replace \ref{item:T6flat} in Theorem \ref{thm:key}
with the weaker pointwise condition $\mathcal{J}\text{-}\lim_nA_{n,k}=0$ for all $k\in \omega$ (namely, \ref{item:T5} with $\mathcal{I}=\mathrm{Fin}$). 
\end{rmk}

However, the weaker condition above is sufficient to obtain the same claim if $\eta_0=\infty$; 
%, where $\eta_0$ has been defined in \eqref{eq:definitioneta0}; 
note that, as it has been shown above, this case is impossible with the hypotheses of Theorem \ref{thm:key}.
\begin{thm}\label{thm:keyfakeinfty}
Let $\mathcal{J}$ be an ideal on $\omega$ which is countably generated. Let also $A=(A_{n,k})$ be a matrix of linear operators in $\mathcal{L}(X,Y)$ which satisfies conditions \ref{item:T3natural} and \ref{item:T5} with $\mathcal{I}=\mathrm{Fin}$. In addition, assume that $\mathcal{J}\text{-}\limsup\nolimits_n \|A_{n,\ge f(n)}\|=\infty$. 

Then there exists a sequence $\bm{x}$ with values on $S_X$ such that $\mathcal{J}\text{-}\limsup\nolimits_n\|A_n\bm{x}\|=\infty$. 
\end{thm}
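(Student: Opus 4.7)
The plan is to adapt the sliding-hump argument of Theorem~\ref{thm:key} to the regime $\mathcal{J}\text{-}\limsup_n\|A_{n,\ge f(n)}\|=\infty$, replacing the stronger column-norm control \ref{item:T6flat} (unavailable here) by the pointwise control supplied by \ref{item:T5} with $\mathcal{I}=\mathrm{Fin}$. Let $(Q_j)$ be an increasing sequence generating $\mathcal{J}$ with $[0,j]\subseteq Q_j$. I will construct recursively strictly increasing sequences $(s_n),(m_n)$ in $\omega$ together with a sequence $\bm{x}=(x_k)$ with each $x_k\in S_X$, the values on $(m_{n-1},m_n]$ being fixed at step $n$ (with $m_{-1}:=-1$), such that for every $n\ge 0$:
\begin{enumerate}[label=\textup{(\roman*)}]
\item $s_n\notin Q_{j_{n-1}}$ for some $j_{n-1}$ satisfying $s_{n-1}\in Q_{j_{n-1}}$, so that $\{s_n:n\in\omega\}\in\mathcal{J}^+$;
\item $\|\sum_{k\le m_{n-1}}A_{s_n,k}x_k\|\le 1$;
\item $\|\sum_{k\in M_n}A_{s_n,k}x_k\|\ge n+3$, where $M_n:=(m_{n-1},m_n]$;
\item $\|A_{s_n,\ge m_n+1}\|\le 1$.
\end{enumerate}
Granting (i)--(iv), Lemma~\ref{lem:convergenceoperator} yields convergence of each $A_{s_n}\bm{x}$, and splitting $A_{s_n}\bm{x}$ into past, middle, and tail gives $\|A_{s_n}\bm{x}\|\ge(n+3)-1-1=n+1$ along a $\mathcal{J}^+$-set, so $\mathcal{J}\text{-}\limsup_n\|A_n\bm{x}\|=\infty$.

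In the inductive step at $n$, once $s_0,\ldots,s_{n-1}$, $m_0<\cdots<m_{n-1}$, and $x_k$ for $k\le m_{n-1}$ are fixed, \ref{item:T5} applied to the specific finitely-supported sequence $(x_0,\ldots,x_{m_{n-1}},0,0,\ldots)$ produces $J_n:=\{t:\|\sum_{k\le m_{n-1}}A_{t,k}x_k\|\le 1\}\in\mathcal{J}^\star$. The heart of the step is the auxiliary claim
$$
\mathcal{J}\text{-}\limsup\nolimits_t\|A_{t,\ge m+1}\|=\infty \quad\text{for every } m\in\omega.
$$
Given this claim, I pick $s_n$ in the $\mathcal{J}^+$-set $J_n\cap\{t:\|A_{t,\ge m_{n-1}+1}\|\ge n+5\}\setminus Q_{j_{n-1}}$, invoke \ref{item:T3natural} to choose $m_n>m_{n-1}$ with $\|A_{s_n,\ge m_n+1}\|\le 1$ (whence by subadditivity $\|A_{s_n,M_n}\|\ge\|A_{s_n,\ge m_{n-1}+1}\|-1\ge n+4$), and then, invoking the paper's observation that the group norm is attained on $S_X$, I select $x_k\in S_X$ for $k\in M_n$ with $\|\sum_{k\in M_n}A_{s_n,k}x_k\|\ge\|A_{s_n,M_n}\|-1\ge n+3$, yielding (ii)--(iv).

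To prove the auxiliary claim I argue by contradiction: assume $\mathcal{J}\text{-}\limsup_t\|A_{t,\ge m+1}\|\le C<\infty$, so that $\|A_{t,\ge m+1}\|\le C$ for $t$ in some $J_0\in\mathcal{J}^\star$. Since $[f(t),\infty)\subseteq[0,m]\cup[m+1,\infty)$, subadditivity of the group norm gives $\|A_{t,\ge f(t)}\|\le\|A_{t,[0,m]}\|+C$ on $J_0$, so the standing hypothesis forces $\mathcal{J}\text{-}\limsup_t\|A_{t,[0,m]}\|=\infty$. On the other hand, the operators $B_t\colon X^{m+1}\to Y$ defined by $B_t(y_0,\ldots,y_m):=\sum_{k=0}^m A_{t,k}y_k$ satisfy $\|B_t\|=\|A_{t,[0,m]}\|$ (with $X^{m+1}$ under the sup norm) and, by \ref{item:T5}, $\mathcal{J}\text{-}\lim_t B_t(y)=0$ for every $y$. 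The ideal Banach--Steinhaus theorem (Theorem~\ref{thm:uniformJboundedness}), applicable since countably generated ideals are rapid$^+$ and $P^+$, then yields $\mathcal{J}\text{-}\limsup_t\|B_t\|<\infty$, the desired contradiction. The main obstacle is that Theorem~\ref{thm:uniformJboundedness} requires each $B_t$ bounded, i.e., each column $A_{t,k}$ with $k\le m$ bounded, which is not part of the standing hypotheses. To bypass this, I plan to iterate Theorem~\ref{thm:lorentzmacphail} on successive closed subspaces of $X^{m+1}$: by \ref{item:T3natural}, for each $t$ there is $K(t)$ such that $A_{t,k}$ is bounded for $k\ge K(t)$, and applying Lorentz--Macphail to the decreasing subspaces $\{y\in X^{m+1}:y_k=0\text{ for }k<p\}$ restricts us, in finitely many iterations, to a $\mathcal{J}^\star$-subset of $t$'s on which every $A_{t,k}$ with $k\le m$ is bounded, after which the standard Banach--Steinhaus bound applies.
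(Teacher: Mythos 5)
Your construction of $(s_n)$, $(m_n)$ and $\bm{x}$ follows the paper's own sliding-hump proof almost exactly: the paper likewise takes $E_n:=\{t:\|A_{t,\ge f(t)}\|\ge n\}\in\mathcal{J}^+$, builds $H_n\in\mathcal{J}^\star$ by applying \ref{item:T5} to the finitely supported sequence already constructed, picks $s_n\in E_n\cap H_n$ outside the current generator, and chooses the middle block via \ref{item:T3natural} and the group norm. Your auxiliary claim that $\mathcal{J}\text{-}\limsup_t\|A_{t,\ge m+1}\|=\infty$ for every $m$ is a sensible (indeed clarifying) supplement, since \ref{item:T5} only controls the specific vector sums over the initial columns and not their group norm. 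The genuine gap is in the last step of your proof of that claim: you cannot, by iterating Theorem \ref{thm:lorentzmacphail}, reach a $\mathcal{J}^\star$-set of rows $t$ on which \emph{every} $A_{t,k}$ with $k\le m$ is bounded. A single fixed column may consist entirely of unbounded operators: take $A_{t,0}=(t+1)^{-1}U$ with $U\in\mathcal{L}(X,Y)$ unbounded; then $\mathcal{J}\text{-}\lim_tA_{t,0}x=0$ for every $x$, so \ref{item:T5} is not violated by this column and \ref{item:T3natural} is unaffected (take $f\equiv 1$), yet $A_{t,0}$ is unbounded for every $t$ and no restriction of the row index helps. Theorem \ref{thm:lorentzmacphail} only gives boundedness of $B_t$ on a fixed tail-subspace $M_{t_0}$, i.e.\ collective boundedness of the columns from roughly $f(t_0)$ onward; the head columns it leaves behind need never become bounded, and a further application has no new decreasing chain of subspaces to exploit. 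So the contradiction $\mathcal{J}\text{-}\limsup_t\|B_t\|<\infty$ is not reached by your route.

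The repair is that the head columns never enter the inequality you actually need. From $\|A_{t,\ge f(t)}\|\le\|A_{t,[f(t),m]}\|+\|A_{t,\ge m+1}\|$ the contradiction hypothesis forces $\mathcal{J}\text{-}\limsup_t\|A_{t,[f(t),m]}\|=\infty$, and the group norm $\|A_{t,[f(t),m]}\|$ involves only columns $k\ge f(t)$, all of which are bounded by the remark following Lemma \ref{lem:convergenceoperator}. Since $\min(f(t),m+1)$ takes at most $m+2$ values, the operators $C_t\colon X^{m+1}\to Y$ given by $C_t(y):=\sum_{k=\min(f(t),m+1)}^{m}A_{t,k}y_k$ are all bounded, satisfy $\|C_t\|=\|A_{t,[f(t),m]}\|$, and obey $\mathcal{J}\text{-}\lim_tC_t(y)=0$ for each $y$, because $\{t:\|C_t(y)\|>\varepsilon\}$ is contained in a finite union, over the possible values $j$ of $\min(f(t),m+1)$, of sets placed in $\mathcal{J}$ by \ref{item:T5} applied to $(0,\dots,0,y_j,\dots,y_m,0,\dots)$. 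Theorem \ref{thm:uniformJboundedness} (countably generated ideals being rapid$^+$ $P^+$) then yields $\mathcal{J}\text{-}\limsup_t\|C_t\|<\infty$ directly, with no appeal to Theorem \ref{thm:lorentzmacphail}. With this substitution your argument closes.
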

\begin{proof}
We proceed with the same strategy of the proof of Theorem \ref{thm:key}. 
Note that \ref{item:T3natural} implies \ref{item:T1flatflat}. 
Accordingly, define $E_n:=\{t \in \omega: \|A_{t,\ge f(t)}\| \ge n\}$, which belongs to $\mathcal{J}^+$ for each $n \in \omega$. 
Set $s_0:=\min S_0$, with $S_0:=E_0$, $H_0:=\omega$, choose $m_0 \in \omega$ such that $\|A_{s_0,\ge m_0}\|\le 1$, which is possible by \ref{item:T3natural}, and pick some arbitrary vectors $x_0,\ldots,x_{m_0} \in S_X$. 

Now, suppose that, for some $n\ge 1$, all the integers $s_i, m_i$, sets $S_i, H_i\subseteq \omega$, and vectors $x_j$ have been defined for all $i\le n-1$ and $j\le m_{n-1}$. 
Then, define recursively 
$$
H_n:=H_{n-1}\cap \left\{t \in\omega: \left\|\sum\nolimits_{k\le m_{n-1}}A_{t,k}x_k\right\|\le 1\right\},
$$
which belongs to $\mathcal{J}^\star$ thanks to \ref{item:T5} with $\mathcal{I}=\mathrm{Fin}$. Define $S_n$ and $s_n$ as in the proof of Theorem \ref{thm:key}, and $m_n>m_{n-1}$ such that $\|A_{s_n,\ge m_n}\|\le 1$, which is possible again by \ref{item:T3natural}. Finally, we choose some vectors $\{x_k: k \in M_n\}$ on the unit sphere $S_X$ such that $\|\sum_{k \in M_n}A_{s_n,k}x_k\| \ge \|A_{s_n,M_n}\|-1$. 
(Here, differently from the previous proof, the sequence $\bm{x}$ has been constructed recursively.) 
Reasoning as in \eqref{eq:chaindifficult}, we conclude that 
$$
\forall n\ge 1, \quad \|A_{s_n}\bm{x}\|\ge n-5 
\quad \text{ and }\quad 
\{s_t: t \in \omega\} \in \mathcal{J}^+.
$$
Therefore $\mathcal{J}\text{-}\limsup_n \|A_n\bm{x}\|=\infty$. 
\end{proof}

As a consequence of the results above, we obtain an ideal version of the Hahn--Schur theorem (where the classical version corresponds to the case $\mathcal{J}=\mathrm{Fin}$): 
%, where it is required that $\lim_n\sum_{k \in E}a_{n,k}=0$ for all $E\subseteq \omega$; on a similar direction, see \cite[Theorem 2.5]{MR978176}). 
\begin{thm}\label{thm:hahnschur}
Let $\mathcal{J}$ be a countably generated ideal. Let also $A=(a_{n,k})$ be an infinite real matrix such that 
%\textup{:}
%\begin{enumerate}[label={\rm (\roman*)}]
%\item $\sum_k|a_{n,k}|<\infty$ for all $n \in \omega$\textup{;}
%\item $\mathcal{J}\text{-}\lim_n \sum_{k \in E}a_{n,k}=0$ for all $E \in \mathrm{Fin}\cup \{\omega\}$\text{.}
%\end{enumerate}
$\sum_k|a_{n,k}|<\infty$ for all $n \in \omega$ and 
\begin{equation}\label{eq:hahnschurhypothesis}
\forall E \subseteq \omega, \quad 
\mathcal{J}\text{-}\lim\nolimits_n \sum\nolimits_{k \in E}a_{n,k}=0.
\end{equation}
Then $\mathcal{J}\text{-}\lim\nolimits_n \sum\nolimits_{k}|a_{n,k}|=0$. 
\end{thm}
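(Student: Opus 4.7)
The plan is to argue by contradiction using the sliding-jump machinery of Section \ref{sec:technical}. In the scalar setting $X=Y=\mathbf{R}$ we have $\|A_{n,\omega}\|=\sum_k|a_{n,k}|$ and $S_X=\{-1,+1\}$, so Theorem \ref{thm:key} and Theorem \ref{thm:keyfakeinfty} produce a single $\pm 1$-valued sequence $\bm{x}$ that realizes $\mathcal{J}\text{-}\limsup_n\|A_n\bm{x}\|$ as the quantity
\[
\eta_0 \;:=\; \mathcal{J}\text{-}\limsup\nolimits_n\sum\nolimits_k|a_{n,k}| \;\in\; [0,\infty]
\]
that we wish to show vanishes. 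So I will assume, for contradiction, that $\eta_0>0$.

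First I verify the technical hypotheses. Absolute convergence of each row yields $\lim_k\sum_{j\ge k}|a_{n,j}|=0$, which is \ref{item:T3natural} (and in particular \ref{item:T1flatflat} holds with $f(n)\equiv 0$). Taking the singleton $E=\{k\}$ in the hypothesis \eqref{eq:hahnschurhypothesis} gives $\mathcal{J}\text{-}\lim_n a_{n,k}=0$ for every $k$, hence \ref{item:T6flat}; and since every $\bm{x}\in c_{00}^b(\mathrm{Fin})$ has finite support, $A_n\bm{x}$ is a finite real linear combination of columns, so \ref{item:T5} with $\mathcal{I}=\mathrm{Fin}$ also follows. If in addition $\eta_0<\infty$, then $J_0:=\{n:\sum_k|a_{n,k}|<\eta_0+1\}\in\mathcal{J}^\star$ witnesses $\sup_{n\in J_0}\|A_{n,\ge 0}\|\le \eta_0+1$, so \ref{item:T1flat} holds with $k_0=0$.

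Next I split into two regimes. If $0<\eta_0<\infty$, Theorem \ref{thm:key} provides $\bm{x}\in\{-1,+1\}^\omega$ with $\mathcal{J}\text{-}\limsup_n|A_n\bm{x}|=\eta_0>0$. If $\eta_0=\infty$, Theorem \ref{thm:keyfakeinfty} (applied with $f\equiv 0$, whose hypotheses were checked above) provides $\bm{x}\in\{-1,+1\}^\omega$ with $\mathcal{J}\text{-}\limsup_n|A_n\bm{x}|=\infty$. In either case, setting $E_+:=\{k:x_k=+1\}$ and $E_-:=\{k:x_k=-1\}$, one has
\[
A_n\bm{x}\;=\;\sum\nolimits_{k\in E_+}a_{n,k}\;-\;\sum\nolimits_{k\in E_-}a_{n,k},
\]
and both terms on the right $\mathcal{J}$-converge to $0$ by the standing hypothesis \eqref{eq:hahnschurhypothesis}. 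Therefore $\mathcal{J}\text{-}\lim_n A_n\bm{x}=0$, whence $\mathcal{J}\text{-}\limsup_n|A_n\bm{x}|=0$, contradicting the previous line.

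The proof is essentially bookkeeping once the sliding-jump results are in hand; the main conceptual point to get right is that in the scalar case the sequences extracted by Theorem \ref{thm:key} and Theorem \ref{thm:keyfakeinfty} take values in $\{-1,+1\}$, so a single $\pm 1$ sequence realizes the $\mathcal{J}$-limsup of the $\ell_1$-norms of the rows, and such a sequence corresponds to testing the hypothesis \eqref{eq:hahnschurhypothesis} on a complementary pair $(E_+,E_-)$. This is precisely the nontrivial input that the Hahn--Schur hypothesis forbids, and it is the only place where the countable generation of $\mathcal{J}$ enters, through the applicability of the two key tools.
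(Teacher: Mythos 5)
Your proposal is correct and takes essentially the same route as the paper: the paper also verifies \ref{item:T3natural}, \ref{item:T6flat} and the relevant boundedness condition, invokes Theorem \ref{thm:key} (for $\eta_0<\infty$) and Theorem \ref{thm:keyfakeinfty} (for $\eta_0=\infty$) to extract a $\{-1,+1\}$-valued sequence realizing $\eta_0$, and then kills $\eta_0$ via the hypothesis \eqref{eq:hahnschurhypothesis}, writing $x_k=2y_k-1$ with $\bm{y}=\bm{1}_E$ instead of your equivalent $E_+/E_-$ split. The only (cosmetic) differences are your contradiction framing versus the paper's direct computation, and your slightly more careful observation that only \ref{item:T1flat} (rather than \ref{item:T1}) is actually needed when $\eta_0<\infty$.
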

\begin{proof}
Set $\eta_0:=\mathcal{J}\text{-}\limsup\nolimits_n \sum_k|a_{n,k}| \in [0,\infty]$. Note also that the standing hypotheses imply \ref{item:T3natural} and \ref{item:T6flat}. In addition, if $\eta_0<\infty$, then \ref{item:T1} holds. 
It follows by Theorem \ref{thm:key} and Theorem \ref{thm:keyfakeinfty} that there exists a real sequence $\bm{x}$ taking values in $\{1,-1\}$ such that 
\begin{equation}\label{eq:hahnschur}
%\exists \bm {x} \in S_^\omega, \quad 
\mathcal{J}\text{-}\limsup\nolimits_n \sum\nolimits_k a_{n,k}x_k
=\eta_0.
\end{equation}
At this point, define $y_n:=(1+x_n)/2$ for all $n \in\omega$, so that $\bm{y}=\bm{1}_E$, where $E:=\{k \in \omega: x_k=1\}$. It follows by \eqref{eq:hahnschurhypothesis} and \eqref{eq:hahnschur} that
\begin{displaymath}
\begin{split}
\eta_0 &=\mathcal{J}\text{-}\limsup\nolimits_n \left(\sum\nolimits_k a_{n,k}(2y_k-1)\right)\\
&=2\cdot \mathcal{J}\text{-}\limsup\nolimits_n \sum\nolimits_k a_{n,k}y_k - \mathcal{J}\text{-}\lim\nolimits_n \sum\nolimits_k a_{n,k}\\
&=2\cdot \mathcal{J}\text{-}\limsup\nolimits_n \sum\nolimits_{k\in E} a_{n,k} =0.
\end{split}
\end{displaymath}
Therefore $\mathcal{J}\text{-}\lim\nolimits_n \sum\nolimits_{k}|a_{n,k}|=0$.
\end{proof}

%\textcolor{red}{[Complete, corollary Hahn--Schur theorem, come son fatti gli altri Swartz?]}

%Neither \ref{item:T6flat} nor \ref{item:T5} with $\mathcal{J}=\mathrm{Fin}$ 
Neither \ref{item:T6flat} nor \ref{item:T5} with $\mathcal{I}=\mathrm{Fin}$ 
will be required for a conclusion as in Theorem \ref{thm:key} in the case of positive linear operators between certain Banach lattices. 
\begin{prop}\label{prop:Anonnegativelema}
Let $X$ be an AM-space 
with order unit $e>0$ and let $Y$ be a Banach lattice. 
Also, let $T=(T_k)$ be a sequence of linear operators in $\mathcal{L}(X,Y)$ which are positive and such that $\sum_k T_kx_k$ is convergent in the norm of $Y$ for all sequences $\bm{x} \in \ell_\infty(X)$. 
Then %$\|T\|=\|\sum_kT_k\|$. 
%
%If, in addition, $X$ has an order unit $e>0$, then 
$$
\forall E\subseteq \omega, \quad 
\|(T_k: k \in E)\|=\left\|\sum\nolimits_{k\in E}T_ke\right\|.
$$
%for all nonempty $E\subseteq \omega$.  
\end{prop}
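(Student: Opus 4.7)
The plan is to prove the two inequalities $\le$ and $\ge$ separately, exploiting the key fact that in an AM-space $X$ with order unit $e$, the closed unit ball coincides with the order interval $[-e,e]$; equivalently, $x \in B_X$ if and only if $|x| \le e$ in the lattice of $X$.

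For the bound $\|(T_k: k \in E)\| \le \|\sum_{k \in E}T_ke\|$, I would fix a finite $F \subseteq E$ and arbitrary $x_k \in B_X$ for $k \in F$. Since $-e \le x_k \le e$ and each $T_k$ is positive, it follows that $|T_kx_k| \le T_ke$ in the lattice of $Y$. The lattice triangle inequality then gives $|\sum_{k\in F}T_kx_k| \le \sum_{k\in F}T_ke$, and the Banach lattice property of $Y$ yields $\|\sum_{k\in F}T_kx_k\| \le \|\sum_{k\in F}T_ke\|$. Next, applying the hypothesis to the bounded sequence $\bm{x}^{(E)}$ defined by $x^{(E)}_k := e \cdot \mathbf{1}_{k \in E}$ shows that $s := \sum_{k \in E}T_ke$ converges in the norm of $Y$; since its partial sums are positive and increasing in the lattice order, a standard argument based on norm-closedness of the positive cone in $Y$ identifies $s$ as their lattice supremum, so $\sum_{k \in F}T_ke \le s$ for every finite $F \subseteq E$, and hence $\|\sum_{k \in F}T_ke\| \le \|s\|$.

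For the reverse inequality, I would simply note that $e \in B_X$, so choosing $x_k = e$ in the definition of the group norm yields $\|\sum_{k \in F}T_ke\| \le \|(T_k: k \in E)\|$ for every finite $F \subseteq E$. Taking $F_n := E \cap [0,n]$ and letting $n \to \infty$, the partial sums converge in norm to $s$ by the hypothesis, and continuity of the norm delivers $\|s\| \le \|(T_k: k \in E)\|$.

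The only delicate step is the identification of the norm limit of the increasing positive partial sums with their lattice supremum, which is the Banach lattice fact needed to pass from $\sum_{k\in F}T_ke \le s$ (as elements of $Y$) to the corresponding norm inequality for every finite $F$. Once this is in hand, both directions collapse to the positivity of the $T_k$, the order-unit structure on $X$ (which matches the unit ball with $[-e,e]$), and the definition of the group norm.
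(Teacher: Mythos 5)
Your proof is correct and follows essentially the same route as the paper's: both rest on the identification $B_X=[-e,e]$ (which the paper derives from the Kakutani representation of AM-spaces with unit), the positivity of the $T_k$ combined with the lattice triangle inequality, and the monotonicity of the norm on the positive cone of $Y$. The only difference is that you spell out the limiting step --- identifying the norm limit of the increasing partial sums $\sum_{k\in F}T_ke$ with their order supremum via closedness of the positive cone --- which the paper compresses into the phrase ``compatibility between the norm and the order structure.''
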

\begin{proof}
Thanks to \cite[Theorem 3.40]{MR2011364}, 
%\cite[Theorem 4.29]{MR2262133}. %ma senza costante 1!!!
there exists a (unique, up to homeomorphism) compact Hausdorff space $K$ and a lattice isometry $h: X\to C(K)$ such that $h(e)$ is the constant function $\bm{1}$, where $C(K)$ is the Banach lattice of continuous functions $f:K\to \mathbf{R}$, endowed with the supremum norm. 
It follows that the closed unit ball $B_X$ of $X$ is simply the order interval $[-e,e]$, indeed 
$$
B_X=h^{-1}(\{f \in C(K): \|f\|\le 1\})=h^{-1}([-\bm{1},\bm{1}])=[-e,e].
$$ 

At this point, let us fix a nonempty $E\subseteq \omega$, 
a positive integer $n$ with $n\le |E|$, 
distinct integers $i_1,\ldots,i_n \in E$, 
and vectors $x_1,\ldots,x_n \in X$ such that $|x_k|\le e$ for all $k \in \{1,\ldots,n\}$. 
By the fact that each $T_{k}$ is a positive linear operator and 
\cite[Theorem 1.7(2)]{MR2011364}, it follows that 
%\cite[Theorem 1.9(1)]{MR2262133} that 
$$
0\le \left|\sum\nolimits_{k\le n}T_{i_k}x_k\right|
\le  \sum\nolimits_{k\le n}\left|T_{i_k}x_k\right|
\le  \sum\nolimits_{k\le n}T_{i_k}\left|x_k\right|
\le  \sum\nolimits_{k\le n}T_{i_k}e.
$$
By the definition of group norm and the compatibility between the norm and the order structure in $X$, we obtain 
$$
\|(T_k: k \in E)\|=\sup\nolimits_{n}\left\|\sum\nolimits_{k\le n, k \in E}T_ke\right\|
=\left\|\sum\nolimits_{k\in E}T_ke\right\|, 
$$
which concludes the proof. 
\end{proof}
%It is worth to remark that each $T_k$ in Proposition \ref{prop:Anonnegativelema} is necessarily continuous, as it follows by \cite[Theorem 4.3]{MR2262133}. \textcolor{red}{[Togliere?]}

In the same spirit of Corollary \ref{cor:nontrivialJFin}, we obtain the following consequence: % (however, for every ideal $\mathcal{J}$): 
\begin{cor}\label{cor:Anonnegativemain}
Let $X$ be an AM-space with order unit $e>0$ and let $Y$ be a Banach lattice. 
Also, let $\mathcal{J}$ be an ideal of $\omega$, and $A=(A_{n,k})$ be a matrix of positive linear operators such that $A \in (\ell_\infty(X), \ell_\infty(Y))$. 
Then%, for all nonempty $E\subseteq \omega$, we have 
$$
\forall E\subseteq \omega, \quad 
\mathcal{J}\text{-}\limsup\nolimits_n \|A_{n,E}\|=\mathcal{J}\text{-}\limsup\nolimits_n 
\|A_n\bm{x}\|,
%\sum\nolimits_{k \in I}A_{n,k}e. 
$$
where $x_n=e$ if $n \in E$ and $x_n=0$ otherwise. 
\end{cor}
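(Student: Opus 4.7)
The plan is to reduce the corollary to a row-by-row application of Proposition \ref{prop:Anonnegativelema}. Since $A \in (\ell_\infty(X), \ell_\infty(Y))$, in particular $\ell_\infty(X) \subseteq \mathrm{dom}(A)$, so for every fixed $n \in \omega$ the series $\sum_k A_{n,k} y_k$ is norm convergent in $Y$ for every $\bm{y} \in \ell_\infty(X)$. Hence the sequence $(A_{n,k}: k \in \omega)$ of positive operators satisfies the hypothesis of Proposition \ref{prop:Anonnegativelema}, yielding, for every $n \in \omega$ and every $E \subseteq \omega$, the exact equality
$$
\|A_{n,E}\| = \left\| \sum\nolimits_{k \in E} A_{n,k} e \right\|.
$$

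Next I would observe that the particular sequence $\bm{x}$ prescribed in the statement (with $x_n = e$ on $E$ and $x_n = 0$ elsewhere) belongs to $\ell_\infty(X)$ since $\|x_n\| \le \|e\|$ for all $n$; accordingly $A\bm{x}$ is well defined and
$$
A_n\bm{x} = \sum\nolimits_k A_{n,k} x_k = \sum\nolimits_{k \in E} A_{n,k} e
$$
for every $n \in \omega$. Comparing this with the identity from the previous paragraph gives the pointwise equality $\|A_n\bm{x}\| = \|A_{n,E}\|$ for every $n \in \omega$.

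Finally, I would take the $\mathcal{J}$-limsup on both sides of this pointwise equality, which yields the claimed identity
$$
\mathcal{J}\text{-}\limsup\nolimits_n \|A_{n,E}\| = \mathcal{J}\text{-}\limsup\nolimits_n \|A_n\bm{x}\|.
$$
There is no genuine obstacle here beyond verifying the hypotheses of Proposition \ref{prop:Anonnegativelema}: the heavy lifting (the interplay between the AM-structure, the positivity of the operators, and the order unit $e$ controlling the closed unit ball as $[-e,e]$) has already been done in that proposition. The only mild point of care is confirming that the row-wise norm convergence of $\sum_k A_{n,k} y_k$ for all $\bm{y} \in \ell_\infty(X)$ is indeed a consequence of $A \in (\ell_\infty(X), \ell_\infty(Y))$, which it is by the very definition of $\mathrm{dom}(A)$ recalled in Section \ref{sec:mainresults}.
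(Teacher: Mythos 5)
Your argument is correct and is exactly the paper's proof: the paper likewise applies Proposition \ref{prop:Anonnegativelema} row by row to get $\|A_{n,E}\|=\|\sum_{k\in E}A_{n,k}e\|=\|A_n\bm{x}\|$ for all $n$, and then takes $\mathcal{J}$-limsups. Your additional verification of the hypotheses of the proposition is a welcome but routine elaboration of the same route.
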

\begin{proof}
Proposition \ref{prop:Anonnegativelema} implies that $\|A_{n,E}\|=\|\sum\nolimits_{k \in E}A_{n,k}e\|=\|A_n\bm{x}\|$ for all $n$. 
\end{proof}

%%%%%%%%%%%%%%%%%%%%%%%%%%%%%%%%%%
%%%%%%%%%%%%%%%%%%%%%%%%%%%%%%%%%%
%%%%%%%%%%%%%%%%%%%%%%%%%%%%%%%%%%
\section{Main proofs}\label{sec:mainproofs}

\begin{proof}[Proof of Theorem \ref{main:IJREGULAR}]
\textsc{If part.} Suppose that \ref{item:T1}-\ref{item:T5} hold and fix a bounded sequence $\bm{x}$ which is $\mathcal{I}$-convergent to $\eta \in X$. 
Thanks to \ref{item:T3}, $A_n\bm{x}$ is well defined for each $n \in \omega$. In addition, we obtain by \ref{item:T1} and \ref{item:T2} that Inequality \eqref{eq:boundedness} holds for all $n$ (indeed, in our case $J_0=J_1=\omega$), proving 
%$$
%\forall n \in \omega, \quad 
%\left\|A_n\bm{x}\right\| \le \left(\sum\nolimits_{k<k_0}\sup\nolimits_t\left\|A_{t,k}x_k\right\|\right)+\|\bm{x}\|\sup\nolimits_t \|A_{t, \ge k_0}\|,
%$$
%which proves 
that $A\bm{x} \in \ell_\infty(Y)$. 

First, suppose that $\eta=0$. 
Fix $\varepsilon>0$ and define 
$$
\delta:=\frac{\varepsilon}{1+\sup_n\|A_{n,\ge k_0}\|} 
%\,\,\,\text{ and }\,\,\,
%E:=\{n \in \omega: n<k_0 \text{ or }\|x_n\|\ge \delta\}. 
$$
and $E:=\{n \in \omega: n<k_0 \text{ or }\|x_n\|>\delta\}$. 
Note that $E \in \mathcal{I}$ and that, again by \ref{item:T3}, $\left(\sum_{k \in E}A_{n,k}x_k: n \in \omega\right)$ is a well-defined sequence in $Y$. 
In addition, 
$$
S:=\left\{n \in\omega: \left\|\sum\nolimits_{k \in E}A_{n,k}x_k\right\|>\delta\right\} \in \mathcal{J}
$$ 
by \ref{item:T5}. 
Now, suppose that $\|A_n\bm{x}\|> \varepsilon$ for some $n \in \omega$. It follows that 
\begin{equation}\label{eq:ddjyhkjfgf}
\varepsilon< 
\left\|\sum\nolimits_{k \in E}A_{n,k}x_k\right\|+ \left\|\sum\nolimits_{k \notin E}A_{n,k}x_k\right\| \le 
\left\|\sum\nolimits_{k \in E}A_{n,k}x_k\right\|+ \delta \sup\nolimits_t\|A_{t,\ge k_0}\|.
\end{equation}
By the definition of $\delta$, this implies that 
$$
\left\|\sum\nolimits_{k \in E}A_{n,k}x_k\right\| >\varepsilon-\delta\sup\nolimits_t\|A_{t,\ge k_0}\|= \delta, 
$$
so that $n \in S$. We conclude that $\{n \in \omega: \|A_n\bm{x}\|>\varepsilon\}\subseteq S \in \mathcal{J}$. By the arbitrariness of $\varepsilon$, we obtain $\mathcal{J}\text{-}\lim A\bm{x}=0$. 

At this point, suppose that $\eta \in X$ and define $\bm{y} \in X^\omega$ such that $y_n:=x_n-\eta$ for all $n$. Note that $\bm{y} \in c_0^b(X,\mathcal{I})$, hence by the previous case $A\bm{y}\in c_{0}^b(Y,\mathcal{J})$. It follows by \ref{item:T4} that 
$$
\mathcal{J}\text{-}\lim A\bm{x}=\mathcal{J}\text{-}\lim A\bm{y}+\mathcal{J}\text{-}\lim\nolimits_n\sum\nolimits_kA_{n,k}\eta=T\eta,
$$
which proves that $A$ is $(\mathcal{I}, \mathcal{J})$-regular with respect to $T$. 

\medskip

\textsc{Only If part.} Assume that $A$ is $(\mathcal{I}, \mathcal{J})$-regular with respect to $T$. Hence, the matrix $A$ belongs, in particular, to $(c(X), \ell_\infty(Y))$. 
It follows by Theorem \ref{thm:maddoxmaincXellinfty} that conditions \ref{item:T1} and \ref{item:T2} hold. 
Also, for each $n \in \omega$, the sum $\sum_kA_{n,k}x_k$ is convergent in the norm of $Y$ for all sequences $\bm{x} \in c^b(X,\mathcal{I})$, hence \ref{item:T3} holds. 
Moreover, for each $x \in X$, the constant sequence $(x,x,\ldots)$ has $\mathcal{I}$-limit $x$, hence $\mathcal{J}\text{-}\lim_n\sum\nolimits_kA_{n,k}x=Tx$, which is condition \ref{item:T4}. 

Lastly, fix $\bm{x}\in c_{00}^b(X,\mathcal{I})$, so that $\mathcal{I}\text{-}\lim \bm{x}=0$. By the $(\mathcal{I}, \mathcal{J})$-regularity of $A$ with respect to $T$, we obtain that $A\bm{x}$ is well defined and $\mathcal{J}\text{-}\lim A\bm{x}=T(0)=0$. This proves \ref{item:T5}.

\medskip 

The second part of the statement follows by Proposition \ref{prop:implications}.\ref{item:4simplification}. 
\end{proof}

%%%%%%%%%%%%%%%%%%%%%%%%%%%%%%%%

\begin{proof}
[Proof of Proposition \ref{prop:implications}] 
\ref{item:1simplification} Condition \ref{item:T4} implies that $\sum_kA_{n,k}$ is convergent in strong operator topology for each $n \in \omega$. The conclusion follows by Lemma \ref{lem:convergenceoperatorbetadualc(X)}. 

\ref{item:2simplification} For all $k \in \omega$ and $x \in X$, if $\lim_n A_{n,k}x=0$ then $\sup_n \|A_{n,k}x\|<\infty$.

\ref{item:3simplification} It follows by Lemma \ref{lem:convergenceoperator}. 

\ref{item:4simplification} Since $A$ belongs to $(c(X), \ell_\infty(Y))$, the claim follows by Theorem \ref{thm:ctoell(X)}. 

\ref{item:5simplification} It is known that each $A_{n,k}$ is bounded. The second part follows by Lemma \ref{lem:convergenceoperator} and Corollary \ref{cor:finitedimensionalVSextremepoints}. (Note that this is not necessarily true if $\mathrm{dim}(X)=\infty$, cf. Remark \ref{rmk:conditionL1tooweak}.)

\ref{item:6simplification} Given $k \in \omega$ and $x \in X$, the sequence $\bm{x}$ defined by $x_n=x$ if $n=k$ and $x_n=0$ otherwise has $\mathcal{I}$-limit $0$, hence $\mathcal{J}\text{-}\lim_nA_{n,k}x=T(0)=0$. This claim follows by Lemma \ref{lem:finitedimensionalnormkjfdhgd}.

\ref{item:7simplification} For all $n,k \in \omega$, there exists $a_{n,k} \in \mathbf{R}$ such that $A_{n,k}=a_{n,k}A_0$. Note also that
\begin{equation}\label{eq:groupnormlinearcombination}
\forall n \in \omega, \forall E\subseteq \omega, \quad 
\|A_{n,E}\|=\|A_0\|\sum\nolimits_{k \in E}|a_{n,k}|.
\end{equation}
Hence condition \ref{item:T1} implies that $A_0$ is necessarily bounded, $k_0=0$ can be chosen by point \ref{item:4simplification} above, and hence $\sup_n\sum\nolimits_k|a_{n,k}|<\infty$. Thanks to \eqref{eq:groupnormlinearcombination}, it is immediate to conclude that $\lim_k\|A_{n,\ge k}\|=\|A_0\|\lim_k \sum_{t\ge k}|a_{n,t}|=0$ for all $n \in \omega$.

\ref{item:8simplification} By point \ref{item:7simplification} above, $A_0$ is bounded. Moreover, if $A_0=0$ the claim is obvious. Otherwise, there exists $x \in X$ such that $A_0x\neq 0$ and by \ref{item:T5} we obtain 
$\mathcal{J}\text{-}\lim_n A_{n,k}x=0$, so that $\|A_0x\|\cdot \mathcal{J}\text{-}\lim_n|a_{n,k}|=0$ for all $k \in \omega$. It follows that $\mathcal{J}\text{-}\lim_n\|A_{n,k}\|=\|A_0\|\cdot \mathcal{J}\text{-}\lim_n|a_{n,k}|=0$. 
\end{proof}

\begin{rmk}\label{rmk:ont1flat}
As it evident from the above proofs, the statements of Proposition \ref{prop:implications}.\ref{item:1simplification}, 
\ref{item:3simplification}, 
\ref{item:5simplification}, 
\ref{item:7simplification}, and 
\ref{item:8simplification} 
are correct also replacing \ref{item:T1} with the weaker condition \ref{item:T1flatflat}. 
\end{rmk}

%%%%%%%%%%%%%%%%%%%%%%%%%%%

\begin{proof}
[Proof of Theorem \ref{thm:JfinIJregularmain}]
First, assume that $A$ is $(\mathcal{I}, \mathcal{J})$-regular with respect to $T$, with $\mathcal{J}$ countably generated.  
Then \ref{item:T1} and \ref{item:T4} holds by Theorem  \ref{main:IJREGULAR}. 
Now, fix $\bm{x} \in c_{00}^b(X, \mathcal{I})$. Then $\mathcal{I}\text{-}\lim \bm{x}=0$ and, since $A$ is $(\mathcal{I}, \mathcal{J})$-regular, we obtain $\mathcal{J}\text{-}\lim A\bm{x}=0$, namely, $\mathcal{J}\text{-}\lim_n \|A_n\bm{x}\|=0$. Then \ref{item:T6} holds by Corollary \ref{cor:nontrivialJFin}.

Conversely, assume that \ref{item:T1}, \ref{item:T4}, and \ref{item:T6} hold. Then \ref{item:T5} holds (since it is implied by \ref{item:T6}), and conditions \ref{item:T2} and \ref{item:T3} hold by Proposition \ref{prop:implications}.\ref{item:2simplification} and \ref{item:3simplification}, respectively. It follows by Theorem \ref{main:IJREGULAR} that $A$ is $(\mathcal{I}, \mathcal{J})$-regular with respect to $T$. 
\end{proof}

%%%%%%%%%%%%%%%%%%%%%%%%%%%%%%

\begin{proof}
[Proof of Theorem \ref{thm:AMspaceregularmain}]
The first proof goes verbatim as in the proof of Theorem \ref{thm:JfinIJregularmain}, replacing Corollary \ref{cor:nontrivialJFin} with Corollary \ref{cor:Anonnegativemain}. 
Also the second part proceeds similarly, with the difference that \ref{item:T2} holds by Proposition \ref{prop:implications}.\ref{item:4simplification}: indeed, thanks to \cite[Theorem 4.3]{MR2262133}, each positive linear operator $A_{n,k}$ between Banach lattices is necessarily continuous. 
\end{proof}

%%%%%%%%%%%%%%%%%%%%%%%%%%%%%%

\begin{proof}
[Proof of Corollary \ref{cor:finitedimensionmain}]
Endow $\mathbf{R}^d$ and $\mathbf{R}^m$ with the corresponding $1$-norm, as in the proof of Corollary \ref{cor:finitedimensionalVSextremepoints}, so that 
%$$
%\forall n,k \in \omega, \quad 
%\|A_{n,k}\|=\max\nolimits_j\sum\nolimits_i|a_{n,k}(i,j)|,
%$$
$\|A_{n,k}\|=\max\nolimits_j\sum\nolimits_i|a_{n,k}(i,j)|$, 
cf. e.g. \cite[Example 5.6.4]{MR2978290}. 
Also, note that, for all $n \in \omega$ and $E\subseteq \omega$, 
\begin{equation}\label{eq:upperlowerboundsnormmatrix}
\frac{1}{d}\sum\nolimits_{k \in E}\sum\nolimits_{i,j}|a_{n,k}(i,j)|\le 
\|A_{n,E}\| \le 
\sum\nolimits_{k \in E}\sum\nolimits_{i,j}|a_{n,k}(i,j)|.
\end{equation}
Accordingly, conditions \ref{item:F1}, \ref{item:F4}, and \ref{item:F6} are simply a rewriting of \ref{item:T1}, \ref{item:T4}, and \ref{item:T6}, respectively, with $k_0=0$ (which can be chosen thanks to Proposition \ref{prop:implications}.\ref{item:5simplification} and \ref{item:4simplification}; in particular, in the following subcases, condition \ref{item:T2} is void). 

\medskip

\textsc{Case $\mathcal{I}=\mathrm{Fin}$:} First, assume that $A$ is $(\mathcal{I},\mathcal{J})$-regular with respect to $T$. 
Then \ref{item:T1} and \ref{item:T4} hold by Theorem  \ref{main:IJREGULAR}. 
In addition, by Proposition \ref{prop:implications}.\ref{item:6simplification} also \ref{item:T6flat} holds. 
At this point, fix $E \in \mathrm{Fin}$ and note that $\|A_{n,E}\|\le \sum_{k \in E}\|A_{n,k}\|$. 
Taking $\mathcal{J}$-limits on both sides, we obtain \ref{item:T6}. 
Conversely, assume that \ref{item:T1}, \ref{item:T4} and \ref{item:T6} hold. Then \ref{item:T6} implies \ref{item:T5}. And \ref{item:T3} holds by Proposition \ref{prop:implications}.\ref{item:3simplification} and \ref{item:5simplification}. Hence $A$ is $(\mathcal{I}, \mathcal{J})$-regular with respect to $T$ by Theorem \ref{main:IJREGULAR}. 

\medskip

\textsc{Case $\mathcal{J}$ is countably generated:} If $A$ is $(\mathcal{I},\mathcal{J})$-regular with respect to $T$, then \ref{item:T1} and \ref{item:T4} hold by Theorem  \ref{main:IJREGULAR}. In addition, \ref{item:T3natural} and \ref{item:T6flat} hold by Proposition \ref{prop:implications}.\ref{item:5simplification} and \ref{item:6simplification}. Then \ref{item:T6} holds by Theorem \ref{thm:JfinIJregularmain}. The converse goes as in the previous case. 

\medskip

\textsc{Case $a_{n,k}(i,j)\ge 0$ for all $1\le i\le m$, $1\le j\le d$, and $n,k \in \omega$:} Note that $\mathbf{R}^d$ is an AM-space with order unit $(1,\ldots,1)$. The proof goes on the same lines of the previous case replacing Theorem \ref{thm:JfinIJregularmain} with Theorem \ref{thm:AMspaceregularmain}.
\end{proof}

\begin{proof}
[Proof of Corollary \ref{cor:llinftyc0}]
Let $\mathcal{I}$ and $\mathcal{I}^\prime$ be two maximal ideals such that $\{2\omega, 2\omega+1\}\subseteq \mathcal{I}\cup \mathcal{I}^\prime$. 
Since $c^b(\mathbf{R}^d,\mathcal{I})=c^b(\mathbf{R}^d,\mathcal{I}^\prime)=\ell_\infty(\mathbf{R}^d)$, we obtain that \ref{item:F6} holds with both $E=2\omega$ and $E=2\omega+1$, hence it is equivalent to \ref{item:F6prime}. 
In turn, \ref{item:F6prime} implies \ref{item:F4} with $T=0$. 
The claim follows by Corollary \ref{cor:finitedimensionmain}.
\end{proof}

\begin{proof}
[Proof of Corollary \ref{cor:eachAnkmultiple}] 
First, suppose that $A$ is $(\mathcal{I},\mathcal{J})$-regular with respect to $T$. 
It follows by Theorem \ref{main:IJREGULAR} that conditions \ref{item:T1}-\ref{item:T5} hold. 
Thanks to Equation \eqref{eq:groupnormlinearcombination} above and \ref{item:T1}, there exists $k_0 \in \omega$ such that $\sup_{n}\|A_{n,\ge k_0}\|=\|A_0\|\sup_n\sum_{k\ge k_0}|a_{n,k}|<\infty$ (since $A_0\neq 0$), which implies \ref{item:M0} and \ref{item:M1}. Condition \ref{item:M4} is just a rewriting of \ref{item:T4}. In addition, conditions \ref{item:T3natural} and \ref{item:T6flat} hold by Proposition \ref{prop:implications}.\ref{item:7simplification}-\ref{item:8simplification}. 
It follows that \ref{item:M6}, which is just a rewriting of \ref{item:T6}, holds for the same reasons in the proof of Corollary \ref{cor:finitedimensionmain}. 

Conversely, assume that conditions \ref{item:M0}-\ref{item:M6} hold. \ref{item:M0} implies that each $A_{n,k}$ is bounded, hence it is possible to choose $k_0=0$ by Proposition \ref{prop:implications}.\ref{item:4simplification}, hence \ref{item:T2} holds. 
Accordingly, \ref{item:M1}, \ref{item:M4}, and \ref{item:M6} are just  rewritings of \ref{item:T1}, \ref{item:T4}, and \ref{item:T6}, respectively. 
Lastly, \ref{item:T3} follows by Proposition \ref{prop:implications}.\ref{item:3simplification} and \ref{item:7simplification}, and \ref{item:T5} is implied by \ref{item:T6}. 
To sum up, conditions \ref{item:T1}-\ref{item:T5} hold, and the conclusion follows by Theorem \ref{main:IJREGULAR}. 
\end{proof}

\begin{proof}
[Proof of Theorem \ref{main:IJREGULARboundedUnbounded}] 
It proceeds on the same lines of the proof of Theorem \ref{main:IJREGULAR} (recalling the \ref{item:T5} implies \ref{item:T2flat}), with the only difference that it is not necessarily true that $J_0=J_1=\omega$ but only $J_0,J_1 \in \mathcal{J}^\star$. 
\end{proof}

\begin{proof}
[Proof of Corollary \ref{cor:RHgeneral}] 
Proceeding as in the proof of Corollary \ref{cor:finitedimensionmain}, note that conditions \ref{item:R1} and \ref{item:R2} are simply a rewriting of \ref{item:T1flat}, \ref{item:R4} is a rewriting of \ref{item:T4}, and \ref{item:R6} is a rewriting of \ref{item:T6}. Since each $A_{n,k}$ is bounded, we choose $k_0=f(n)=0$ for all $n \in Q_{t_0}$ in the statement of Theorem  \ref{main:IJREGULARboundedUnbounded}. 
%; in particular, condition \ref{item:T2flat} is void. 

First, assume that $A$ satisfies \eqref{eq:boundedunboundeddefinitiona}. 
It follows by Theorem  \ref{main:IJREGULARboundedUnbounded} that \ref{item:T1flat}, \ref{item:T4}, and \ref{item:T5} hold. 
In addition, \ref{item:T1flat} implies \ref{item:T3natural} by Lemma \ref{lem:convergenceoperator} and Corollary \ref{cor:finitedimensionalVSextremepoints}; and condition \ref{item:T6flat} holds by Lemma \ref{lem:finitedimensionalnormkjfdhgd}. We conclude by Corollary \ref{cor:nontrivialJFin} that $A$ satisfies \ref{item:T6}. 

Conversely, assume that \ref{item:T1flat}, \ref{item:T4}, and \ref{item:T6} hold. Then \ref{item:T6} implies \ref{item:T5}. 
As before, \ref{item:T1flat} implies \ref{item:T3natural}, so that $A$ satisfies \ref{item:T3} by Lemma \ref{lem:convergenceoperator}. The conclusion follows by Theorem \ref{main:IJREGULARboundedUnbounded}.
\end{proof}

\begin{proof}
[Proof of Theorem \ref{main:IJREGULARUnboundedBounded}] 
Suppose for the sake of contradiction that such a matrix $A$ exists. 
Since $A \in (c_{00}(X,\mathcal{I}), \ell_\infty(Y))$, it follows by Theorem \ref{thm:c00ellinfty} that 
%
%
%Since $A$ is, in particular, $(\mathcal{I},\mathcal{J})$-regular with respect to $T$, then \ref{item:T1}-\ref{item:T5} hold by Theorem \ref{main:IJREGULAR}. 
%Moreover, $A$ belongs also to $(c_{00}(X,\mathcal{I}), \ell_\infty(Y))$, hence it follows by Theorem \ref{thm:Xomegaellinfty} that 
%\begin{equation}\label{eq:conditionuglynonextistence}
%\forall E \in \mathcal{I} \cap \mathrm{Fin}^+, \quad 
%F_E:=\{k \in E: A_{n,k}\neq 0 \text{ for some }n \in \omega\} \in \mathrm{Fin}.  
%\end{equation}
%
%Thanks to Lemma \ref{lem:Itall}, there exists a partition $\{E_n: n \in \omega\}$ of $\omega$ such that each $E_n\in \mathcal{I}\cap \mathrm{Fin}^+$ for all $n$.  
%Hence $F_{E_n}$ is finite for all $n \in \omega$. Define $H:=\bigcup_n F_{E_n}$, so that 
%$$
%H=\{k \in \omega: A_{n,k}\neq 0 \text{ for some }n \in \omega\}.
%$$ 
%Let us suppose that $H$ is infinite. Since $\mathcal{I}$ is tall there exists an infinite subset $H^\prime \subseteq H$ such that $H^\prime \in \mathcal{I}$. However, it follows by construction that $F_{H^\prime}=H^\prime$, which contradicts \eqref{eq:conditionuglynonextistence}. This proves that $H$ is finite, i.e., 
$$
\exists k_1 \in \omega, \forall k\ge k_1, \forall n \in \omega, \quad A_{n,k}=0.
$$

At this point, 
since $A$ is also $(\mathcal{I},\mathcal{J})$-regular with respect to $T$, then \ref{item:T1}-\ref{item:T5} hold by Theorem \ref{main:IJREGULAR}. 
In addition, $T$ is a nonzero linear operator, hence there exists (a nonzero) $x \in X$ such that $y:=Tx\neq 0$, and pick $\bm{x}:=(x,x,\ldots)$.  
%Define $M:=\{\bm{x} \in X^\omega: x_k=x \text{ for all }k\ge k_1\}$. 
Since $A$ satisfies \eqref{eq:boundedunboundeddefinition}, we obtain that 
%$$
%\forall \bm{x}\in M, \quad \mathcal{J}\text{-}\lim A\bm{x}=y.
%$$
$\mathcal{J}\text{-}\lim A\bm{x}=y$. % for all $\bm{x} \in M$. 
However, it follows by condition \ref{item:T5} that 
$$
\mathcal{J}\text{-}\lim\nolimits_n A_n\bm{x}=
\mathcal{J}\text{-}\lim\nolimits_n \sum\nolimits_{k<k_1}A_{n,k}x=0,
$$
providing the desired contradiction. 
\end{proof}

\begin{proof}
[Proof of Theorem \ref{thm:cIc_0bJ}]
This is an immediate consequence of Theorem \ref{thm:c00ellinfty}.
%
%If $A$ satisfies \ref{item:B1}-\ref{item:B3} and $\bm{x}$ is an $\mathcal{I}$-convergent sequence, then $A\bm{x}$ is well defined by \ref{item:B1}, it is bounded by \ref{item:B2}, and $\mathcal{J}$-convergent to $0$ by \ref{item:B3}. 
%
%Conversely, suppose that $A \in (c(X,\mathcal{I}), c_0^b(Y,\mathcal{J}))$. By the first part of the proof of Theorem \ref{main:IJREGULARUnboundedBounded}, and we obtain that \ref{item:B1} holds. 
%Now, fix $x \in X$ and a nonnegative integer $k<k_1$. Let $\bm{x}$ be the sequence such that $x_n=x$ if $n=k$ and $x_n=0$ otherwise. 
%Since $\mathcal{I}\text{-}\lim \bm{x}=0$, it follows that $A\bm{x}$ is a bounded sequence (which proves \ref{item:B2}) such that $\mathcal{J}\text{-}\lim A_{n}\bm{x}=\mathcal{J}\text{-}\lim A_{n,k}x=0$ (which proves \ref{item:B3}). 
\end{proof}

\begin{proof}
[Proof of Theorem \ref{main:IJREGULARUnboundedUnbounded}] 
%\textcolor{red}{Riscrivere senza \ref{item:T2flat}} 
First, suppose that \ref{item:T1flat}, \ref{item:T3sharp}, \ref{item:T4}, and \ref{item:T5sharp} hold for some $k_0 \in \omega$. 
Fix a sequence $\bm{x}$ such that $\mathcal{I}\text{-}\lim \bm{x}=\eta$. 
By \ref{item:T3sharp}, $A\bm{x}$ is well defined. Moreover, since \ref{item:T5sharp} implies \ref{item:T2flat}, we can pick $\kappa$ and $J_1 \in \mathcal{J}^\star$ as in the proof of Theorem \ref{thm:maddoxmain}, and define $E:=\{k \in \omega: \|x_k-\eta\|\ge 1\} \in \mathcal{I}$. 
Using \ref{item:T5sharp}, there exists $J_2 \in \mathcal{J}^\star$ such that $\left\|\sum_{k \in E, k\ge k_0}A_{n,k}x_k\right\|\le 1$ for all $n \in J_2$.
At this point, it follows by \ref{item:T1flat} that
\begin{displaymath}
\begin{split}
%\forall n \in J, \quad 
\|A_n\bm{x}\|&\le \left\|\sum\nolimits_{k<k_0}A_{n,k}x_k\right\|
+ \left\|\sum\nolimits_{k\in E, k\ge k_0}A_{n,k}x_k\right\|+ \left\|\sum\nolimits_{k\notin E, k\ge k_0}A_{n,k}x_k\right\|\\
&\le \kappa k_0+1+\sup\{\|x_k\|: k\notin E\}\cdot \sup\nolimits_{t \in J_0}\left\|A_{t, E^c \setminus [0,k_0)}\right\|\\
&\le \kappa k_0+1+(\|\eta\| +1)\cdot \sup\nolimits_{t \in J_0}\left\|A_{t,\ge k_0}\right\|
\end{split}
\end{displaymath}
for all $n \in J$, where $J:=J_0\cap J_1\cap J_2 \in \mathcal{J}^\star$. Therefore $A\bm{x} \in \ell_\infty(Y,\mathcal{J})$. 

We conclude as in the proof of Theorem \ref{main:IJREGULAR} that $A$ satisfies \eqref{eq:unoundedunboundeddefinition}, with the only difference that Inequality \eqref{eq:ddjyhkjfgf} holds for all $n \in J_0$, once we replace $\sup_t\|A_{t,\ge k_0}\|$ with $\sup_{t\in J_0}\|A_{t,\ge k_0}\|$.

\medskip 

Conversely, if $A$ satisfies \eqref{eq:unoundedunboundeddefinition}, then 
\ref{item:T3sharp} holds because the sequence $A\bm{x}$ is well defined for each $\bm{x} \in c(X,\mathcal{I})$; moreover, $A\in (c^b(X,\mathcal{I}), c(Y,\mathcal{J}))$, and it follows by Theorem \ref{main:IJREGULARboundedUnbounded} that conditions  \ref{item:T1flat} and \ref{item:T4} hold. 
Lastly, if $\bm{x}$ is a sequence supported on $\mathcal{I}$, then $\mathcal{J}\text{-}\lim A\bm{x}=0$, which proves \ref{item:T5sharp}. 
%Conversely, if $\mathcal{J}$ is a rapid$^+$ $P^+$-ideal and $A$ satisfies \eqref{eq:unoundedunboundeddefinition}, then $A\in (c^b(X,\mathcal{I}), c(Y,\mathcal{J}))$, and it follows by Theorem \ref{main:IJREGULARboundedUnbounded} that conditions \ref{item:T1flat}, \ref{item:T2flat}, and \ref{item:T4} hold. 
%Lastly, if $\bm{x}$ is a sequence supported on $\mathcal{I}$, then $\mathcal{J}\text{-}\lim A\bm{x}=0$, which proves \ref{item:T5sharpsharp}. 
\end{proof}

\begin{proof}
[Proof of Theorem \ref{thm:cXcYrapid}]
Thanks to Theorem \ref{main:IJREGULARUnboundedUnbounded}, we have only to show that if $A$ satisfies \eqref{eq:unoundedunboundeddefinition} then \ref{item:T1flat} holds. This follows by Theorem \ref{thm:ctoell(X)} since $A \in (c(X),\ell_\infty(Y,\mathcal{J}))$, with $k_0=f(n)=0$ for all $n \in \omega\setminus J_0$ if each $A_{n,k}$ is bounded.
\end{proof}

%%%%%%%%%%%%%%%%%%%%%%%%%%%5
%%%%%%%%%%%%%%%%%%%%%%%%%%%%

\section{Closing remarks} 

We leave as open questions for the reader to check whether Theorem \ref{thm:key} holds for the larger class of rapid$^+$ $P^+$-ideals $\mathcal{J}$, and whether this condition characterizes the latter class of ideals in the same spirit of \cite[Theorem 5.1]{MR4172859}. An analogue question could be asked for Theorem \ref{thm:lorentzmacphail}. 
In addition, it would be interesting to obtain a characterization of the matrix class $(c(X,\mathcal{Z}), c(Y,\mathcal{Z}))$, analogously to Theorem \ref{thm:cXcYrapid}. 

\subsection*{Acknowledgments} 
The author is grateful to 
PRIN 2017 (grant 2017CY2NCA) for financial support. He is very thanksful to an anonymous referee for a careful and constructive reading of the manuscript. 
%The author declares no conflict of interests and that he is the only contributor of the manuscript. 

%

%%%%%%%%%%%%%%%%%%%%%%%%%%%%%%%%%%
%%%%%%%%%%%%%%%%%%%%%%%%%%%%%%%%%%
%%%%%%%%%%%%%%%%%%%%%%%%%%%%%%%%%%

%%%%%%%%%%%%%%%%%%%%%%%%%%%%%%
%\nocite{*}
\bibliographystyle{amsplain}

\bibliography{core}

\end{document}